\newtheorem{theorem}{Theorem}[section]
\newtheorem{proposition}[theorem]{Proposition}
\theoremstyle{definition}
\newtheorem{definition}[theorem]{Definition}
\theoremstyle{remark}
\theoremstyle{rem}
\newcommand{\R}{\mathbb{R}}
\newcommand{\N}{\mathbb{N}}
\newcommand{\E}{\mathbb{E}}
\newcommand{\sol}{\phi}
\newcommand{\secref}[1]{Section~\ref{#1}}
\newcommand{\figref}[1]{Figure~\ref{#1}}
\newcommand{\tabref}[1]{Table~\ref{#1}}
\newcommand{\randomvar}{\xi}
\newcommand{\stochdim}{m}
\newcommand{\probmeasure}{\pi}
\newcommand{\polyapproxspace}{\mathcal{W}_J(\Xi)}
\newcommand{\SGOrder}{J}
\pgfplotsset{compat = newest}
\begin{document}

\providecommand{\keywords}[1]{{\noindent \textit{Key words:}} #1}
\providecommand{\msc}[1]{{\noindent \textit{Mathematics Subject Classification:}} #1}

\title{Space-time stochastic Galerkin boundary elements for acoustic scattering problems\\\vskip 0.8cm}
\author{Heiko Gimperlein\thanks{University of Innsbruck, Engineering Mathematics, Technikerstra\ss e 13, 6020 Innsbruck,
Austria, e-mail: heiko.gimperlein@uibk.ac.at.}, Fabian Meyer\thanks{Institute of Applied Analysis and Numerical Simulation, University of Stuttgart, Pfaffenwaldring 57, 70569 Stuttgart, Germany.}, Ceyhun \"{O}zdemir${}^{\ast}$\thanks{Institute for Mechanics and Graz Center of Computational Engineering, Graz University of Technology, 8010 Graz, Austria, e-mail: ceyhun.oezdemir@uibk.ac.at. \\ $\ $ \\There are no conflicts of interest related to this work.}}

\maketitle \vskip 0.5cm
\begin{abstract}
\noindent Acoustic emission or scattering problems naturally involve uncertainties about the sound sources or boundary conditions. This article initiates the study of time domain boundary elements for such stochastic boundary problems for the acoustic wave equation.  We present a space-time stochastic Galerkin boundary element method which is applied to sound-hard, sound-soft and absorbing scatterers. Uncertainties in both the sources and the boundary conditions are considered using a polynomial chaos expansion. The numerical experiments illustrate the performance and convergence of the proposed method in model problems {and present an application to a problem from traffic noise}.  
\end{abstract}

%\msc{65N38 (primary); 65M15 (secondary)}\\

%\keywords{boundary element method; a posteriori error estimates; adaptive mesh refinements; wave equation.}
%%%%%%%%%%%%%%%%%%%%%%%%%%%%%%%%%%%%%%%%
%%%%%%%%%%%%%%%SECTION%%%%%%%%%%%%%%%
%%%%%%%%%%%%%%%%%%%%%%%%%%%%%%%%%%%%%%%%
\section{Introduction}

This article introduces a boundary element method for the quantification of uncertainties in time-dependent sound emission and scattering problems. In these problems the nature of the sound source and the reflection or absorbing properties of the scatterer is often difficult to measure, for example, in traffic noise to assess the sound emission of vehicles \cite{comput}.  

The quantification of uncertainties is by now a key consideration in the assessment of numerical models in engineering    \cite{Chen20211,Freschi202218113,Ranftl202058,Ryan2021713}. We refer to  \cite{BFT07,BTZ04,BespCathSil12,Charr12,Deb01,Frau05} for relevant works on partial differential equations involving random variables, in time-dependent or time-independent situations. 
{The survey \cite{gunzburger14} discusses modern stochastic collocation and Galerkin approaches, especially for time-independent problems. Specifically for the wave equation in a bounded domain, reference \cite{Motamed_2012} considers a stochastic collocation method for problems with an uncertain wave velocity, but deterministic initial and boundary conditions. Extensions to elastodynamics are considered in \cite{Motamed_2015}. Also geometric uncertainties of the domain have recently been considered in the frequency domain \cite{henriquez2021diss,henriquez2021shape}. } 

In this article we propose and analyze a stochastic space-time Galerkin boundary element method for the acoustic wave equation {in unbounded,  exterior domains} with uncertain boundary conditions, based on a polynomial chaos expansion \cite{Wiener1938,XiuKarniadakis2002,fm19} of the random variables. Our approach is based on recent advances for time domain boundary elements, which reduce the {exterior} wave equation to an integral equation on the boundary. Such methods are particularly relevant for emission or scattering problems, which involve wave propagation over large distances. Both space-time Galerkin and convolution quadrature methods have been developed, see \cite{costabel04,hd,sayas2016retarded} for an overview.  Recent developments in the directions of the current work include stable formulations \cite{Aimi09,LalSayas09,SteinTorres22}, efficient discretizations \cite{aimi2023higher,desiderio2020efficient,graded,gimperlein2019hp,residual,PoelzSch21}, compression of the dense matrices \cite{aimi2022partially,seibel2022boundary} as well as complex coupled and interface problems \cite{aimi2023time,aimi2024weak,Aimi14,desiderio2023cvem,contact,FSI,wave-wave-coupling,Hsiao19}.

To be specific, the method presented here reduces stochastic boundary value problems for the {exterior} acoustic wave equation to integral equations on the boundary of the computational domain. Using a polynomial chaos expansion of the random variables, a high-dimensional integral equation is obtained which is then discretized by a Galerkin method in space and time. The resulting high-dimensional, but sparse, space-time system is solved in a marching-on-in-time time-stepping scheme \cite{terrasse}.
The approach is presented for stochastic Dirichlet, Neumann and acoustic boundary conditions, with uncertain sources and acoustic parameters. We develop the theoretical background to establish stability and convergence of the proposed discretization of the stochastic boundary integral equations. The numerical experiments in  three space dimensions illustrate the performance and accuracy of this method for increasing stochastic, respectively space-time degrees of freedom. {An application to a problem from traffic noise is presented.}

{Note that the size of the linear algebraic system for stochastic Galerkin finite elements in three space dimensions remains challenging, in spite of extensive research (see p.~566 in \cite{gunzburger14} for a review of relevant works). Our use of boundary element methods for the wave equation crucially leads to a \emph{sparse} system of equations in \emph{two} space dimensions.}\\

\noindent This article is organized as follows:
In Chapter \ref{stochprob}, we introduce stochastic Dirichlet, Neumann and acoustic  boundary value problems for the wave equation. We introduce the relevant boundary integral operators for the wave equation and formulate the boundary value problems as integral equations on the boundary. Furthermore we discuss abstract stochastic space-time Galerkin methods for the stochastic Dirichlet, stochastic Neumann and stochastic acoustic problems. Their stability follows from the weak coercivity of the formulation. 
In Chapter \ref{sec:SSTBEM}, we use a polynomial chaos expansion and discretize the Galerkin formulation using orthonormal basis functions in the stochastic variables. The discretization in space and time uses a tensor-product ansatz, leading to a marching-on-in time scheme.
Chapter \ref{sec:numericalExpetiments} presents the numerical experiments for uncertain sources and acoustic parameters, {both in model problems and for a standard problem in traffic noise.} Details of the discretization and the theoretical framework of Sobolev spaces are presented in appendices.\\

\noindent {\emph{Notation:} In this article we denote the time derivative of a function $f$ by $\partial_t f$ or also by $\dot{f}$. We further write $f \lesssim g$ provided there exists a constant $C$ such that $f \leq Cg$. If the constant
$C$ is allowed to depend on a parameter $\sigma$, we write $f \lesssim_\sigma g$.}

%%%%%%%%%%%%%%%%%%%%%%%%%%%%%%%%%%%%%%%%
%%%%%%%%%%%%%%%SECTION%%%%%%%%%%%%%%%
%%%%%%%%%%%%%%%%%%%%%%%%%%%%%%%%%%%%%%%%
\section{Stochastic boundary problems and integral formulations}\label{stochprob}

We consider the  {exterior} wave equation
\begin{align}\label{eq:waveeq}
\partial_t^2 u - \Delta u = 0\ , \quad u=0\ \text{ for }\ t\leq 0\ 
\end{align}
in the complement $\mathbb{R}^d \setminus \overline{\mathcal{X}}$ of a polyhedral domain or screen,
with $\overline{\mathcal{X}} \subset \R^d$,
where we focus on the cases $d=2$ and $d=3$. 
On the boundary $\Gamma = \partial \mathcal{X}$ we either prescribe stochastic Dirichlet (SD), stochastic Neumann (SN) or 
stochastic acoustic boundary conditions (SA), involving a random right hand side $f$
\begin{align}
u(t,x,\randomvar) &= f(t,x,\randomvar) \tag {SD} \label{eq:stochDirichlet} \\
-\partial_\nu u(t,x,\randomvar) &= f(t,x,\randomvar) \tag {SN} \label{eq:stochNeumann} \\
\partial_\nu u(t,x,\randomvar) -\alpha({t},x,\randomvar) \partial_t u(t,x,\randomvar) 
&= f(t,x,\randomvar), \tag{SA} \label{eq:stochAcoustic}
\end{align}
where $\nu$ is the outer unit normal vector to $\Gamma$, $x \in \Gamma$. The variable $\randomvar$ ranges over the  stochastic space $\Xi$, with a probability measure $\probmeasure(\randomvar)$.

We recast the boundary value problems as time dependent boundary integral equations in a suitable scale of function spaces. We first consider the wave equation \eqref{eq:waveeq} with stochastic Dirichlet  \eqref{eq:stochDirichlet}, respectively Neumann conditions \eqref{eq:stochNeumann}, before acoustic boundary conditions \eqref{eq:stochAcoustic} are considered.
%%%%%%%%%%%%%%%%%%%%%%%%%%%%%%%%%%%%%%%%
%%%%%%%%%%%%%%%SUBSECTION%%%%%%%%%%%%%%%
%%%%%%%%%%%%%%%%%%%%%%%%%%%%%%%%%%%%%%%%
\subsection{Dirichlet and Neumann boundary conditions}\label{sec:dirneu}

For the Dirichlet problem  \eqref{eq:stochDirichlet}, we make an ansatz for the solution using the single layer potential in time domain,
\begin{equation}\label{singlay}
u(t,x,\randomvar) =\int_{\mathbb{R}^+} \int_{\Gamma} G(t- \tau,x,y)\ \phi(\tau,y,\randomvar)\ d\tau\ ds_y\ 
\end{equation}
almost everywhere for $x\in  \Gamma$ and $\probmeasure$-a.s.~for $\randomvar \in \Xi$.
Here, $G$ is a fundamental solution of the wave equation \eqref{eq:waveeq},
\begin{align}\label{eq:green}
 G(t-s,x,y)&= \frac{H(t-s-|x-y|)}{2\pi \sqrt{(t-s)^2+|x-y|^2}}\qquad &\text{for } d = 2,\\
 G(t-s,x,y)&=\frac{\delta(t-s-|x-y|)}{4\pi |x-y|} &\text{for } d=3,
\end{align}
involving the Heaviside function $H$ for $d=2$ and the Dirac distribution $\delta$ for $d=3$. 
The density $\phi$ is assumed to be causal: $\phi(\tau,y,\randomvar)=0$ for $\tau<0$. 

Taking the trace of the ansatz \eqref{singlay} on $\Gamma$, we obtain the single layer operator $\mathcal{V}$
$$\mathcal{V} \phi(t,x,\randomvar)={  \int_{\mathbb{R}^+} \int_\Gamma G(t- \tau,x,y)\ \phi(\tau,y,\randomvar)\ d\tau\ ds_y\, . }$$

As $u|_\Gamma = \mathcal{V} \phi$,  we obtain an equivalent formulation of the wave equation \eqref{eq:waveeq} with stochastic Dirichlet boundary conditions \eqref{eq:stochDirichlet}, $u|_\Gamma = f$, as a time dependent integral equation 
\begin{align} \label{symmhyp}
\mathcal{V} \phi = f .
\end{align}
Given a solution  for the density $\phi$ in \eqref{symmhyp}, the solution to the wave equation is obtained using the single layer ansatz \eqref{singlay}. \\

An analogous formulation for  the Neumann problem uses a double layer potential ansatz for the solution $u$:
\begin{align}\label{doubleansatz}
u(t,x,\randomvar)&=\int_{\mathbb{R}^+}\int_{\Gamma}  \frac{\partial G}{\partial \nu_y}(t- \tau,x,y)\ \psi(\tau,y,\randomvar)\ d\tau\ ds_y,
\end{align}
where $\psi$ again is a causal function, $\psi(\tau,y,\randomvar) = 0$ for $\tau< 0$. The resulting integral formulation for the wave equation \eqref{eq:waveeq} with stochastic Neumann boundary conditions \eqref{eq:stochNeumann}, $-\partial_\nu u = f$, is the hypersingular equation
\begin{align}\label{hypersingeq}
 \mathcal{W} \psi = f\ .
\end{align}
Here, 
\begin{equation}\mathcal{W} \psi(t,x,\randomvar)= -\int_{\mathbb{R}^+} \int_\Gamma \frac{\partial^2  G}{\partial \nu_x \partial \nu_y}(t- \tau,x,y)\ \psi(\tau,y,\randomvar)\ ds_y\ d\tau  \ . \label{Woperator}\end{equation}
Given a solution  for the density $\psi$ in \eqref{hypersingeq}, the solution to the wave equation is obtained using the double layer ansatz \eqref{doubleansatz}.\\

The numerical experiments will quantify errors in space, time and stochastic variables using natural energy and $L^2$ norms. More generally,  
space--time anisotropic Sobolev spaces provide a convenient mathematical setting to study time dependent boundary integral equations like \eqref{symmhyp} and \eqref{hypersingeq}, which we review in Appendix A. The  Sobolev space
$L^2_w(\Xi; H_\sigma^{r}(\mathbb{R}^+, \widetilde{H}^{s}(\Gamma)))$ with \begin{align*}
\lVert u \rVert_{L^2_w(\Xi; H_\sigma^{r}(\mathbb{R}^+, \widetilde{H}^{s}(\Gamma)))} := \left(\int_{\Xi} \lVert u \rVert_{r,s,\Gamma}^{2} d\pi\right)^{1/2}
\end{align*}
is used to measure the error of the numerical solution in the stochastic variables in an $L^2$-norm, the spatial $L^2$-error of the $s$-th derivative, and the temporal $L^2$-error of the $r+s$-th derivative. \\

Using these spaces, we define the bilinear form bilinear form associated to the Dirichlet, respectively Neumann problem  by
\begin{align} \label{eq:stochbilinearform}
  B_{D}^s(\phi, \varphi) = \int_\Xi \int_{\mathbb{R}^+}\int_\Gamma \mathcal{V} \partial_t {\phi}(t,x,\randomvar)\ \varphi(t,x,\randomvar)\ ds_x \ d_\sigma t \ w(\randomvar)d\randomvar\ , \quad (\phi,\varphi \in L^2_w(\Xi; H_\sigma^{1}(\mathbb{R}^+, \widetilde{H}^{-\frac{1}{2}}(\Gamma)))),\\
  B_{N}^s(\psi, \chi) = \int_\Xi \int_{\mathbb{R}^+}\int_\Gamma \mathcal{W} \partial_t {\psi}(t,x,\randomvar)\ \chi(t,x,\randomvar)\ ds_x \ d_\sigma t \ w(\randomvar)d\randomvar\ , \quad (\psi, \chi \in L^2_w(\Xi; H_\sigma^{1}(\mathbb{R}^+, \widetilde{H}^{\frac{1}{2}}(\Gamma)))).
  \end{align}
 Here $d_\sigma t = e^{-2\sigma t} dt$ with $\sigma>0$.

The  weak formulation of the  single-layer equation \eqref{symmhyp} then reads, given data $f \in L_w^2(\Xi;L^2_\sigma(\mathbb{R}^+,\widetilde{H}^{\frac{1}{2}}(\Gamma)))$:\\

\noindent Find $\phi \in L_w^2(\Xi;H^{1}_\sigma(\mathbb{R}^+,\widetilde{H}^{-\frac{1}{2}}(\Gamma)))$ such that for all $\varphi \in L_w^2(\Xi;H^{1}_\sigma(\mathbb{R}^+,\widetilde{H}^{-\frac{1}{2}}(\Gamma)))$
\begin{equation}\label{DP} B_{D}^s(\phi, \varphi) = (\dot{f},\varphi)\ .\end{equation}

Similarly, the weak formulation of the hypersingular equation \eqref{hypersingeq} reads as follows, for data $f \in L_w^2(\Xi;L^2_\sigma(\mathbb{R}^+,\widetilde{H}^{-\frac{1}{2}}(\Gamma)))$:\\

\noindent Find $\psi \in L_w^2(\Xi;H^{1}_\sigma(\mathbb{R}^+,\widetilde{H}^{\frac{1}{2}}(\Gamma)))$ such that for all $\chi \in L_w^2(\Xi;H^{1}_\sigma(\mathbb{R}^+,\widetilde{H}^{\frac{1}{2}}(\Gamma)))$
\begin{equation}\label{NP} B_{N}^s(\psi, \chi) = (\dot{f},\chi)\ .\end{equation}

Below we will consider Galerkin discretizations of \eqref{DP} and \eqref{NP} using piecewise polynomial, conforming approximations in the subspaces  $$V \subset L_w^2(\Xi;H^{1}_\sigma(\mathbb{R}^+,\widetilde{H}^{-\frac{1}{2}}(\Gamma))), \text{ respectively } W \subset L_w^2(\Xi;H^{1}_\sigma(\mathbb{R}^+,\widetilde{H}^{\frac{1}{2}}(\Gamma))) \ .$$ 
The discretization is discussed in detail in Section 3.

The resulting Galerkin boundary element method of the single-layer equation \eqref{DP} is given by:

\noindent Find $\phi_{h,\Delta t,\xi} \in V$ such that for all $\varphi_{h,\Delta t,\xi} \in V$
\begin{equation}\label{DPh} B_{D}^s(\phi_{h,\Delta t,\xi}, \varphi_{h,\Delta t,\xi}) = (\dot{f},\varphi_{h,\Delta t,\xi})\ .\end{equation}

For the discretization of the hypersingular equation \eqref{NP} we similarly consider:

\noindent Find $\psi_{h,\Delta t, \xi} \in W$ such that for all $\chi_{h,\Delta t,\xi} \in W$
\begin{equation}\label{NPh} B_{N}^s(\psi_{h,\Delta t,\xi}, \chi_{h,\Delta t,\xi}) = (\dot{f},\chi_{h,\Delta t,\xi})\ .\end{equation}

The following results detail the theoretical numerical analysis of the Galerkin approximations \eqref{DPh}, resp.~\eqref{NPh}, including the stability and convergence of the numerical methods. Readers interested in the implementation and performance may refer to Sections 3 and 4.

For the analysis, we first note a stochastic extension of the well-known mapping properties known for
the layer potentials between Sobolev spaces related to the energy, see e.g.~\cite{hd}. A detailed exposition of the mathematical background of time domain integral equations and 
their discretizations is available in the monograph by Sayas \cite{sayas2016retarded}, 
including methods based on convolution quadrature. See \cite{costabel04, hd} for more concise introductions.

\begin{theorem}\label{mapthm}
The single layer and hypersingular operators are continuous for $r \in \mathbb{R}$:
\begin{align*}
  \mathcal{V}&: L^2_w(\Xi; H_\sigma^{r+1}(\mathbb{R}^+, \widetilde{H}^{-\frac{1}{2}}(\Gamma))) \to L^2_w(\Xi;H_\sigma^{r}(\mathbb{R}^+, H^{\frac{1}{2}}(\Gamma)))  \ ,\\ \mathcal{W} &: L^2_w(\Xi; H_\sigma^{r+1}(\mathbb{R}^+, \widetilde{H}^{\frac{1}{2}}(\Gamma))) \to L^2_w(\Xi; H_\sigma^{r}(\mathbb{R}^+, H^{-\frac{1}{2}}(\Gamma)))\ .
  \end{align*}
\end{theorem}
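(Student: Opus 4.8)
\emph{Proof sketch (proposal).} The plan is to reduce the claim to the corresponding \emph{deterministic} mapping properties of the retarded single-layer and hypersingular operators, exploiting that neither $\mathcal{V}$ nor $\mathcal{W}$ involves the stochastic variable $\randomvar$: on the Bochner space $L^2_w(\Xi;H_\sigma^{r+1}(\mathbb{R}^+,\widetilde H^{-\frac12}(\Gamma)))$ the operator $\mathcal{V}$ acts as $\mathrm{id}_{L^2_w(\Xi)}\otimes\mathcal{V}$, i.e.\ it is applied for $\probmeasure$-almost every fixed $\randomvar$, and likewise for $\mathcal{W}$. First I would recall from the literature on time domain boundary integral equations (e.g.\ \cite{costabel04,hd,sayas2016retarded}) that for every $r\in\mathbb{R}$ the operators $\mathcal{V}\colon H_\sigma^{r+1}(\mathbb{R}^+,\widetilde H^{-\frac12}(\Gamma))\to H_\sigma^{r}(\mathbb{R}^+,H^{\frac12}(\Gamma))$ and $\mathcal{W}\colon H_\sigma^{r+1}(\mathbb{R}^+,\widetilde H^{\frac12}(\Gamma))\to H_\sigma^{r}(\mathbb{R}^+,H^{-\frac12}(\Gamma))$ are bounded, with operator norm $C_\sigma$ depending only on $\sigma$, $r$ and $\Gamma$. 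This estimate is obtained by passing to the Fourier--Laplace transform in time, where $\mathcal{V}$, $\mathcal{W}$ become the Helmholtz single-layer and hypersingular boundary integral operators with complex wavenumber $\omega$, $\mathrm{Re}\,\omega=\sigma$; bounding their operator norms on $\Gamma$ by a fixed power of $|\omega|$ and invoking Plancherel along the line $\mathrm{Re}\,\omega=\sigma$ produces exactly the loss of one time derivative seen in the statement.

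Next I would carry out the lift to the stochastic setting. Given $\phi\in L^2_w(\Xi;H_\sigma^{r+1}(\mathbb{R}^+,\widetilde H^{-\frac12}(\Gamma)))$, one has to check that $\randomvar\mapsto\mathcal{V}\phi(\cdot,\cdot,\randomvar)$ is strongly $\probmeasure$-measurable with values in $H_\sigma^{r}(\mathbb{R}^+,H^{\frac12}(\Gamma))$; this holds because a bounded linear operator maps strongly measurable Banach-space valued functions to strongly measurable ones, or, concretely, by approximating $\phi$ by simple functions $\sum_k\mathbf{1}_{A_k}\phi_k$ with deterministic $\phi_k$, applying $\mathcal{V}$ termwise, and passing to the limit using the deterministic bound. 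Since that bound holds $\probmeasure$-a.e.\ in $\randomvar$ with the same constant $C_\sigma$, squaring and integrating against $w\,d\randomvar$ gives, by Tonelli,
\[
\|\mathcal{V}\phi\|_{L^2_w(\Xi;H_\sigma^{r}(\mathbb{R}^+,H^{\frac12}(\Gamma)))}^2=\int_\Xi\|\mathcal{V}\phi(\cdot,\cdot,\randomvar)\|_{H_\sigma^{r}(\mathbb{R}^+,H^{\frac12}(\Gamma))}^2\,w(\randomvar)\,d\randomvar\le C_\sigma^2\,\|\phi\|_{L^2_w(\Xi;H_\sigma^{r+1}(\mathbb{R}^+,\widetilde H^{-\frac12}(\Gamma)))}^2,
\]
which is the asserted continuity of $\mathcal{V}$. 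The argument for $\mathcal{W}$ is verbatim the same, with $\widetilde H^{-\frac12}$ replaced by $\widetilde H^{\frac12}$ and $H^{\frac12}$ by $H^{-\frac12}$.

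I expect the only genuine obstacle to be the deterministic operator-norm estimate itself, since it rests on the uniform-in-frequency analysis of the Helmholtz layer operators; in the write-up I would simply cite it rather than reprove it. Everything specific to the stochastic problem — the Tonelli step and the measurability of $\randomvar\mapsto\mathcal{V}\phi(\cdot,\cdot,\randomvar)$ — is routine, the key (and essentially trivial) observation being that $G$, and hence $\mathcal{V}$ and $\mathcal{W}$, are independent of $\randomvar$, so the constant $C_\sigma$ in the pointwise bound carries no $\randomvar$-dependence and may be pulled out of the integral.
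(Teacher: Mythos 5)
Your proposal is correct and follows essentially the same route as the paper: the authors likewise observe that $\mathcal{V}$ and $\mathcal{W}$ do not depend on $\randomvar$ and reduce the claim to the deterministic mapping properties cited from \cite{hd,setup}. Your write-up merely makes explicit the (routine) strong-measurability and Tonelli steps that the paper leaves implicit.
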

\begin{proof}
Because the integral operators $\mathcal{V}$ and $\mathcal{W}$ do not depend on the stochastic variable, the proof follows from the mapping properties without stochastic variables, see \cite{hd, setup}.
\end{proof}
Theorem \ref{mapthm} implies that the continuity of the bilinear form $B_{D}^s$ from \eqref{eq:stochbilinearform} on $L^2_w(\Xi; H_\sigma^{1}(\mathbb{R}^+, \widetilde{H}^{-\frac{1}{2}}(\Gamma)))$.  Similarly, the bilinear form $B_{N}^s$ is continuous on $L^2_w(\Xi; H_\sigma^{1}(\mathbb{R}^+, \widetilde{H}^{\frac{1}{2}}(\Gamma)))$.  

The following theorem details the continuity and (weak) coercivity properties of $B_N^s$ and $B_D^s$: 
\begin{theorem}\label{DPbounds} a) For all
  $\phi,\varphi \in L^2_w(\randomvar;H^1_\sigma( \mathbb{R}^+, \widetilde{H}^{-\frac{1}{2}}(\Gamma)))$,
  $$|B_{D}^s(\phi,\varphi)| \lesssim \|\phi\|_{L^2_w(\Xi;H^1_\sigma( \mathbb{R}^+, \widetilde{H}^{-\frac{1}{2}}(\Gamma)))} \|\varphi\|_{L^2_w(\Xi;H^1_\sigma( \mathbb{R}^+, \widetilde{H}^{-\frac{1}{2}}(\Gamma)))}$$
  and
  $$\|\phi\|_{L^2_w(\Xi;H^0_\sigma( \mathbb{R}^+, \widetilde{H}^{-\frac{1}{2}}(\Gamma)))}^2 \lesssim B_{D}^s(\phi,\phi) . $$
b) For all
  $\psi,\chi \in L^2_w(\randomvar;H^1_\sigma( \mathbb{R}^+, \widetilde{H}^{\frac{1}{2}}(\Gamma)))$,
  $$|B_{N}^s(\psi,\chi)| \lesssim \|\psi\|_{L^2_w(\Xi;H^1_\sigma( \mathbb{R}^+, \widetilde{H}^{\frac{1}{2}}(\Gamma)))} \|\chi\|_{L^2_w(\Xi;H^1_\sigma( \mathbb{R}^+, \widetilde{H}^{\frac{1}{2}}(\Gamma)))}$$
  and
  $$\|\psi\|_{L^2_w(\Xi;H^0_\sigma( \mathbb{R}^+, \widetilde{H}^{\frac{1}{2}}(\Gamma)))}^2 \lesssim B_{N}^s(\psi,\psi) . $$
\end{theorem}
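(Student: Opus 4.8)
The plan is to reduce both parts to the classical deterministic ($\randomvar$-independent) mapping and coercivity estimates for the retarded single layer and hypersingular operators, due to Bamberger and Ha Duong \cite{hd} (see also \cite{costabel04,sayas2016retarded}). Since $\mathcal{V}$ and $\mathcal{W}$ act only on the space-time variables $(t,x)$ and the measure on $\Xi\times\mathbb{R}^+\times\Gamma$ factorizes, Fubini's theorem gives for $\phi,\varphi\in L^2_w(\Xi;H^1_\sigma(\mathbb{R}^+,\widetilde{H}^{-\frac12}(\Gamma)))$
$$B_D^s(\phi,\varphi)=\int_\Xi b_D\big(\phi(\cdot,\cdot,\randomvar),\varphi(\cdot,\cdot,\randomvar)\big)\,w(\randomvar)\,d\randomvar,\qquad B_N^s(\psi,\chi)=\int_\Xi b_N\big(\psi(\cdot,\cdot,\randomvar),\chi(\cdot,\cdot,\randomvar)\big)\,w(\randomvar)\,d\randomvar,$$
where $b_D(\phi,\varphi)=\int_{\mathbb{R}^+}\int_\Gamma \mathcal{V}\partial_t\phi\,\varphi\,ds_x\,d_\sigma t$ and $b_N(\psi,\chi)=\int_{\mathbb{R}^+}\int_\Gamma\mathcal{W}\partial_t\psi\,\chi\,ds_x\,d_\sigma t$ are the usual deterministic bilinear forms. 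So I would first prove the four inequalities for $b_D$ and $b_N$ with constants independent of $\randomvar$, and then integrate against $w(\randomvar)\,d\randomvar$: for the coercivity lower bounds one applies the deterministic estimate pointwise in $\randomvar$ and integrates, since both sides are linear under $\int_\Xi\,\cdot\,w\,d\randomvar$; for the continuity upper bounds one applies the deterministic continuity bound pointwise in $\randomvar$ and then uses the Cauchy--Schwarz inequality on $(\Xi,w\,d\randomvar)$.

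The continuity bounds then follow from \thmref{mapthm}. If $\phi\in H^1_\sigma(\mathbb{R}^+,\widetilde{H}^{-\frac12}(\Gamma))$ then $\partial_t\phi\in H^0_\sigma(\mathbb{R}^+,\widetilde{H}^{-\frac12}(\Gamma))$, and \thmref{mapthm} with $r=-1$ gives $\mathcal{V}\partial_t\phi\in H^{-1}_\sigma(\mathbb{R}^+,H^{\frac12}(\Gamma))$; pairing this with $\varphi\in H^{1}_\sigma(\mathbb{R}^+,\widetilde{H}^{-\frac12}(\Gamma))$ through the duality of $H^{-1}_\sigma(\mathbb{R}^+,H^{\frac12}(\Gamma))$ and $H^{1}_\sigma(\mathbb{R}^+,\widetilde{H}^{-\frac12}(\Gamma))$ with respect to the $d_\sigma t$-weighted space-time pairing (i.e.\ Cauchy--Schwarz in the Laplace variable, trading powers of the complex frequency) bounds $|b_D(\phi,\varphi)|$ by $\|\phi\|_{H^1_\sigma(\mathbb{R}^+,\widetilde{H}^{-\frac12}(\Gamma))}\|\varphi\|_{H^1_\sigma(\mathbb{R}^+,\widetilde{H}^{-\frac12}(\Gamma))}$ up to a constant. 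The hypersingular case is identical after replacing $\widetilde{H}^{-\frac12}(\Gamma)$, $H^{\frac12}(\Gamma)$ by $\widetilde{H}^{\frac12}(\Gamma)$, $H^{-\frac12}(\Gamma)$; this is in fact already the content of the discussion following \thmref{mapthm}.

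The coercivity bounds are the substantive part, and I would base them on the positivity of the symbols of $\mathcal{V}$ and $\mathcal{W}$ under a Fourier--Laplace transform in time. Writing a causal $\phi$ via its inverse Laplace transform and using the Parseval identity for the $d_\sigma t$-weighted inner product, $b_D(\phi,\phi)$ becomes an integral over $\eta\in\mathbb{R}$, with $s=\sigma+i\eta$, of $\mathrm{Re}\big(\overline{s}\,\langle\widehat{\mathcal{V}}(s)\widehat{\phi}(s),\overline{\widehat{\phi}(s)}\rangle_\Gamma\big)$, where $\widehat{\mathcal{V}}(s)$ denotes the Helmholtz single layer operator at complex frequency $s$. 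The Bamberger--Ha Duong estimate $\mathrm{Re}\big(\overline{s}\,\langle\widehat{\mathcal{V}}(s)v,\overline{v}\rangle_\Gamma\big)\gtrsim_\sigma |s|^{-1}\|v\|_{\widetilde{H}^{-\frac12}(\Gamma)}^2$ then yields $b_D(\phi,\phi)\gtrsim\|\phi\|_{H^0_\sigma(\mathbb{R}^+,\widetilde{H}^{-\frac12}(\Gamma))}^2$, which is part a); the analogous estimate for $\widehat{\mathcal{W}}(s)$, coercive in $\widetilde{H}^{\frac12}(\Gamma)$, gives part b). Integrating in $\randomvar$ as above completes the argument. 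I expect the main obstacle to be purely bookkeeping: tracking the powers of $|s|$ so that the single time derivative $\partial_t$ in the bilinear form exactly compensates the $|s|^{-1}$ loss in the Bamberger--Ha Duong symbol estimate, and matching the $\widetilde{H}^{\pm\frac12}(\Gamma)$ spatial norms for $\mathcal{V}$ versus $\mathcal{W}$. No genuinely new difficulty arises in the stochastic variable, precisely because $\mathcal{V}$ and $\mathcal{W}$ are independent of $\randomvar$, so the deterministic theory transfers verbatim after integration.
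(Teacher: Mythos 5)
Your proposal is correct and follows essentially the same route as the paper: since $\mathcal{V}$ and $\mathcal{W}$ act only on $(t,x)$, both the continuity and the weak coercivity reduce via Fubini (plus Cauchy--Schwarz on $(\Xi,w\,d\randomvar)$ for the upper bounds) to the deterministic estimates, which is exactly what the paper's one-line proof asserts by citing \cite{hd}, Proposition~3.1 of \cite{residual} and Theorem~2.1~b) of \cite{china}. The only difference is that you additionally sketch the proofs of those deterministic estimates (duality from \thmref{mapthm} for continuity, Fourier--Laplace symbol positivity for coercivity), which the paper delegates entirely to the references.
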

\begin{proof}
Again, the proof of Theorem \ref{DPbounds} follows immediately from the results without stochastic variables, see e.g.~\cite{hd} or in the specific notation used here, Proposition 3.1 of \cite{residual} for a) and Theorem 2.1 b) of \cite{china} for b).
\end{proof}

As a first consequence, we obtain the stability of the proposed methods:
\begin{theorem}Let $\sigma>0$, $r \in \R$. \\ a) Let $f \in L^2_w(\Xi, H^{r+1}_{\sigma}(\mathbb{R}^+,H^{\frac{1}{2}}(\Gamma)))$. Then there exists a unique solution $\phi \in L^2_w(\Xi, H^r_{\sigma}(\mathbb{R}^+,\widetilde{H}^{-\frac{1}{2}}(\Gamma)))$  of \eqref{DP}  and a unique solution $\phi_{h,\Delta t,\xi} \in V$  of \eqref{DPh}, and 
\begin{equation}
\|\phi\|_{L_{w}^{2}(\Xi,H_{\sigma}^{r}(\mathbb{R}^{+}),\widetilde{H}^{-1/2}(\Gamma))}, \|\phi_{h,\Delta t,\xi}\|_{L_{w}^{2}(\Xi,H_{\sigma}^{r}(\mathbb{R}^{+}),\widetilde{H}^{-1/2}(\Gamma))} \lesssim_\sigma \|f\|_{L_{w}^{2}(\Xi,H_{\sigma}^{r+1}(\mathbb{R}^{+}),\widetilde{H}^{1/2}(\Gamma))}\ .
\end{equation}
b) Let $f \in L^2_w(\Xi, H^{r+1}_{\sigma}(\mathbb{R}^+,H^{-\frac{1}{2}}(\Gamma)))$. Then there exists a unique solution $\psi \in L^2_w(\Xi, H^{r+1}_{\sigma}(\mathbb{R}^+,\widetilde{H}^{\frac{1}{2}}(\Gamma)))$  of \eqref{NP} and a unique solution $\psi_{h,\Delta t,\xi} \in W$  of \eqref{NPh}, and
\begin{equation}
\|\psi\|_{L_{w}^{2}(\Xi,H_{\sigma}^{r}(\mathbb{R}^{+}),\widetilde{H}^{1/2}(\Gamma))}, \|\psi_{h,\Delta t,\xi}\|_{L_{w}^{2}(\Xi,H_{\sigma}^{r}(\mathbb{R}^{+}),\widetilde{H}^{1/2}(\Gamma))}\lesssim_\sigma \|f\|_{L_{w}^{2}(\Xi,H_{\sigma}^{r+1}(\mathbb{R}^{+}),\widetilde{H}^{-1/2}(\Gamma))} \ .
\end{equation}
\end{theorem}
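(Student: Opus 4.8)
The plan is to obtain the statement as a stochastic lift of the classical well-posedness theory for retarded boundary integral equations. The key observation is that $\mathcal{V}$, $\mathcal{W}$, and hence the bilinear forms $B_D^s$ and $B_N^s$, act only on the space--time variables $(t,x)$ and are independent of $\xi$, whereas the norm of $L^2_w(\Xi;\cdot)$ is merely the $L^2(\Xi,\pi)$-average of the corresponding deterministic norm $\|\cdot\|_{r,s,\Gamma}$. So I would first establish the purely deterministic statement (no $\xi$), i.e.\ well-posedness of $\mathcal{V}\phi = f$, resp.\ $\mathcal{W}\psi = f$, in the $\sigma$-weighted Sobolev scales with the stated $\sigma$-explicit bound, and then transfer it fibrewise in $\xi$.

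For the deterministic part I would invoke the Bamberger--Ha-Duong theory, which is exactly Proposition~3.1 of \cite{residual} for part a) and Theorem~2.1~b) of \cite{china} for part b); I would cite these directly. Its mechanism, in the single-layer case, runs as follows. Theorem~\ref{DPbounds} already provides the continuity of $B_D^s$ on $H^1_\sigma(\mathbb{R}^+,\widetilde{H}^{-\frac{1}{2}}(\Gamma))$ (through the mapping property of $\mathcal{V}$ in Theorem~\ref{mapthm}) together with the weak coercivity $\|\phi\|_{H^0_\sigma(\mathbb{R}^+,\widetilde{H}^{-\frac{1}{2}}(\Gamma))}^2 \lesssim B_D^s(\phi,\phi)$. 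The additional structural ingredient is that $\mathcal{V}$ is a convolution in time: after a Laplace transform $t\mapsto s=\sigma+\mathrm{i}\omega$, the form $B_D^s$ splits into a family of sesquilinear forms on $\widetilde{H}^{-\frac{1}{2}}(\Gamma)$, one per frequency, with $s$-explicit continuity and coercivity constants. Lax--Milgram at each frequency then gives a unique $\hat\phi(s)$ with $\|\hat\phi(s)\|_{\widetilde{H}^{-\frac{1}{2}}(\Gamma)}\lesssim_\sigma |s|\,\|\hat f(s)\|_{H^{\frac{1}{2}}(\Gamma)}$; multiplying by $|s|^{2r}$ and integrating over the contour $\mathrm{Re}\,s=\sigma$ reproduces, via the Plancherel characterisation of the norm of $H^r_\sigma(\mathbb{R}^+,X)$, exactly the asserted estimate, while uniqueness comes from injectivity at each frequency. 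Equivalently, the general-$r$ bound reduces to the $r=0$ bound of Theorem~\ref{DPbounds} by applying it to $\partial_t^r f$ and using that $\partial_t^r$ commutes with $\mathcal{V}$, $\mathcal{W}$ and shifts the temporal Sobolev index by $r$ isometrically. The Neumann case b) is identical, with $\widetilde{H}^{-\frac{1}{2}}$, $H^{\frac{1}{2}}$ replaced by $\widetilde{H}^{\frac{1}{2}}$, $H^{-\frac{1}{2}}$ and $B_D^s$ by $B_N^s$. For the Galerkin equations \eqref{DPh} and \eqref{NPh} the restriction of Theorem~\ref{DPbounds} to the conforming subspaces $V$, $W$ is immediate, with constants independent of $h$, $\Delta t$ and the stochastic discretisation, and the same Bamberger--Ha-Duong argument carried out on these subspaces --- as in \cite{residual,china} --- yields discrete existence, uniqueness and the stated bound.

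The stochastic statement then follows by applying the deterministic result for $\pi$-a.e.\ $\xi\in\Xi$ to the datum $f(\cdot,\cdot,\xi)$ and integrating $\|\cdot\|_{r,s,\Gamma}^2$ over $\Xi$ against $w(\xi)\,d\xi$; Bochner measurability of $\xi\mapsto\phi(\cdot,\cdot,\xi)$ is immediate, as it is the image of the measurable map $\xi\mapsto f(\cdot,\cdot,\xi)$ under a fixed bounded linear solution operator, and Tonelli's theorem justifies the interchange of $\int_\Xi$ with the space--time norms. The one genuinely non-formal point --- already present in the deterministic theory and hence the main obstacle --- is the ``loss of a time derivative'': the coercivity in Theorem~\ref{DPbounds} controls only the $H^0_\sigma$-norm of the solution, whereas one must produce a solution with one additional order of temporal regularity, in the scale of the data. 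This gap cannot be closed by abstract Lax--Milgram on a single Hilbert space and genuinely uses the temporal-convolution (Laplace-multiplier) structure of $\mathcal{V}$ and $\mathcal{W}$; in the present setting, however, it is entirely inherited from the deterministic references \cite{hd,sayas2016retarded,residual,china}, since the boundary operators do not see $\xi$, so that the only new ingredient --- the passage to $L^2_w(\Xi;\cdot)$ --- is routine.
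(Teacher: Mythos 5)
Your argument is correct and coincides with the paper's own proof, which simply invokes the deterministic well-posedness result (Theorem~6 of \cite{graded}, equivalently the Bamberger--Ha-Duong theory you describe) and lifts it to $L^2_w(\Xi;\cdot)$ fibrewise, with measurability of $\xi\mapsto\phi(\cdot,\cdot,\xi)$ obtained exactly as you say, from the continuity of the fixed solution operator. Your write-up merely makes explicit the Laplace-transform mechanism and the loss of a temporal derivative that the cited deterministic references already contain.
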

\begin{proof} The proof of this theorem follows immediately from Theorem 6 of \cite{graded}. In particular, measurability follows from the continuous dependence of the solution operator. \end{proof}

From the continuity and (weak) coercivity of the bilinear forms, it is now standard \cite{hd,setup,china} to conclude the convergence of the Galerkin solutions in \eqref{DPh}, \eqref{NPh}.  Details are omitted here.

%%%%%%%%%%%%%%%%%%%%%%%%%%%%%%%%%%%%%%%%
%%%%%%%%%%%%%%%SUBSECTION%%%%%%%%%%%%%%%
%%%%%%%%%%%%%%%%%%%%%%%%%%%%%%%%%%%%%%%%

\subsection{Acoustic boundary conditions}
For the acoustic boundary problem \eqref{eq:stochAcoustic}, we assume that the coefficient $\alpha({t},x,\randomvar)$ in the acoustic boundary condition \eqref{eq:stochAcoustic} is bounded from above and below,
$$0<\underline{\alpha} \leq \alpha({t},x,\randomvar) \leq \overline{\alpha} <\infty,$$
almost everywhere for $t\geq0$, $x\in  \Gamma$, $\probmeasure$-a.s. $\randomvar \in \Xi$.

In addition to the single layer operator $\mathcal{V}$ and the hypersingular operator $\mathcal{W}$, 
for the boundary integral formulation of this problem we require the double layer operator $\mathcal{K}$ and the adjoint double layer operator $\mathcal{K}'$ for $x \in \Gamma$, $t>0$: 
\begin{align}
\mathcal{K}\varphi(t,x,\xi)&= \int_{\mathbb{R}^+} \int_\Gamma \frac{\partial  {G}}{\partial \nu_y}(t- \tau,x,y)\ \varphi(\tau,y,\xi)\ ds_y\ d\tau\ , \label{Koperator}\\
\mathcal{K}' \varphi(t,x,\xi)&=  \int_{\mathbb{R}^+} \int_\Gamma \frac{\partial  {G}}{\partial \nu_x}(t- \tau,x,y)\ \varphi(\tau,y,\xi)\ ds_y\ d\tau\, . \label{Kpoperator}
\end{align}
{For the derivation and the analysis of the relevant boundary integral formulation, following \cite{hd,ha2003galerkin} we introduce the auxiliary \emph{interior} problem
\begin{align}\label{eq:waveeqint}
\partial_t^2 u^i - \Delta u^i = 0\ , \quad u=0\ \text{ for $x \in \mathcal{X}$ and }\ t\leq 0\ 
\end{align}
with the boundary condition
\begin{align}
\partial_\nu u^i(t,x,\randomvar) +\alpha(t,x,\randomvar) \partial_t u^i(t,x,\randomvar) 
= g(t,x,\randomvar), \tag{SA${}^i$} \label{eq:stochAcousticInt}
\end{align}
on $\Gamma$.
Denote by $U$ the function in $\mathbb{R}^d \setminus \Gamma$ which is equal to the solution $u$ in $\mathbb{R}^d \setminus \overline{\mathcal{X}}$, resp.~to the solution $u^i$ of the auxiliary problem in $\mathcal{X}$, Then the following representation formula holds
\begin{equation}\label{repformula}
U(t,x,\randomvar) =\int_{\mathbb{R}^+} \int_{\Gamma} G(t- \tau,x,y)\ p(\tau,y,\randomvar)\ d\tau\ ds_y + \int_{\mathbb{R}^+} \int_{\Gamma} \frac{\partial  {G}}{\partial \nu}(t- \tau,x,y)\ \varphi(\tau,y,\randomvar)\ d\tau\ ds_y\ , 
\end{equation}
where $\varphi = u^i - u$ and $p = \frac{\partial  u^i}{\partial \nu}-\frac{\partial  u}{\partial \nu}$ denote the jumps of the Dirichlet, resp.~Neumann traces on $\Gamma$. Letting $x \in \mathbb{R}^n\setminus \Gamma$ tend to $\Gamma$, one obtains the following classical relations on $\Gamma$:
\begin{align*}
&u^i = \mathcal{V}p + (\textstyle{\frac{1}{2}}-\mathcal{K})\varphi,   &u = \mathcal{V}p + (-\textstyle{\frac{1}{2}}-\mathcal{K}),\\
&\textstyle{\frac{\partial  u^i}{\partial \nu}} = (\textstyle{\frac{1}{2}}+\mathcal{K}')p-\mathcal{W}\varphi, &\textstyle{\frac{\partial  u}{\partial \nu}} = (-\textstyle{\frac{1}{2}}+\mathcal{K}')p-\mathcal{W}\varphi.
\end{align*}
We may substitute these into the acoustic boundary conditions \eqref{eq:stochAcoustic}, \eqref{eq:stochAcousticInt} to obtain a system of boundary integral equations on $\Gamma$:
\begin{align*}
(-\textstyle{\frac{1}{2}}+\mathcal{K}')p-\mathcal{W}\varphi-\alpha\partial_t(\mathcal{V}p-(\textstyle{\frac{1}{2}}+\mathcal{K})\varphi) &= f,\\
(\textstyle{\frac{1}{2}}+\mathcal{K}')p-\mathcal{W}\varphi+\alpha\partial_t(\mathcal{V}p+(\textstyle{\frac{1}{2}}-\mathcal{K})\varphi) &= g.
\end{align*}
Specifically, for scattering problems with an incoming wave $u_{inc}$ one uses $f = -\partial_\nu u_{inc}+\alpha \partial_t u_{inc}$, $g=-\partial_\nu u_{inc}-\alpha\partial_t u_{inc}$. This leads to the system
\begin{align*}
2(\mathcal{K}'p-\mathcal{W}\varphi)+\alpha\partial_t\varphi &= -\partial_\nu u_{inc},\\
p+2\alpha(\mathcal{V} \partial_t p-\mathcal{K}\partial_t \varphi) &=-2 \alpha \partial_t u_{inc}.
\end{align*}
}
For the time interval $[0,T]$, where $T$ is finite or $T=\infty$, we introduce the relevant bilinear form $a_T^s$ on $L^2_w(\Xi;H^1([0,T],\widetilde{H}^{\frac{1}{2}}(\Gamma))) \times L^2_w(\Xi;H^1([0,T],L^2(\Gamma)))$, given by
\begin{equation}\label{eq:bilinear_recall}
 a_T^s((\varphi,p),(\psi,q))=\int_\Xi\int_0^T \int_\Gamma \left( \alpha \dot{\varphi} \dot{\psi}  +  \frac{1}{\alpha} p q +  {2} \mathcal{K}' p \dot{\psi}
- {2} \mathcal{W} \varphi \dot{\psi} + {2} \mathcal{V} \dot{p} q {-2} \mathcal{K} \dot{\varphi} q \right) ds_x\, dt \ w(\randomvar) d\randomvar
\end{equation}
for $(\varphi,p),(\psi,q) \in L^2_w(\Xi;H^1([0,T],\widetilde{H}^{\frac{1}{2}}(\Gamma))) \times L^2_w(\Xi;H^1([0,T],L^2(\Gamma)))$.\\

The bilinear form now allows us to  reduce the wave equation \eqref{eq:waveeq}
  with acoustic boundary conditions \eqref{eq:stochAcoustic} to a boundary integral equation on $\Gamma$. The weak form is given by\\
\noindent Find $(\varphi,p) \in L^2_w(\Xi;H^1([0,T],\widetilde{H}^{\frac{1}{2}}(\Gamma))) \times L^2_w(\Xi;H^1([0,T],L^2(\Gamma)))$ such that
\begin{equation}\label{eq:acoustic_recall}
a_T^s((\varphi,p), (\psi,q)) = l(\psi, q)
\end{equation}
for all $(\psi,q)\in L^2_w(\Xi;H^1([0,T],{H}^{\frac{1}{2}}(\Gamma))) \times L^2_w(\Xi;H^1([0,T],L^2(\Gamma)))$.\\

Here,
\begin{equation}\label{eq:rechte_recall}
 l(\psi, q)=\int_\Xi \int_0^T \int_\Gamma F \dot{\psi} \,ds_x\, dt\ w(\randomvar) d\randomvar+\int_\Xi\int_0^T \int_\Gamma G q %\frac{G q}{\alpha}
\, ds_x\, dt\, w(\randomvar) d\randomvar,
\end{equation}
where $F = -2\partial_\nu u_{inc}$, $G=-2 \partial_t u_{inc}$.\\

In the deterministic case, this formulation has been considered in \cite{hd,ha2003galerkin,residual}.\\

The Galerkin discretization of \eqref{eq:acoustic_recall} using piecewise polynomial, conforming approximations in the subspace $$Z \subset  L^2_w(\Xi;H^1([0,T],\widetilde{H}^{\frac{1}{2}}(\Gamma))) \times L^2_w(\Xi;H^1([0,T],L^2(\Gamma)))$$ reads:\\

\noindent Find $(\varphi_{h,\Delta t,\xi}, p_{h,\Delta t,\xi}) \in Z$ such that for all $(\psi_{h,\Delta t,\xi},q_{h,\Delta t,\xi})\in Z$
\begin{equation}\label{eq:acoustic_recallh}
a_T^s((\varphi_{h,\Delta t,\xi},p_{h,\Delta t,\xi}), (\psi_{h,\Delta t,\xi},q_{h,\Delta t,\xi})) = l(\psi_{h,\Delta t,\xi}, q_{h,\Delta t,\xi})\ .
\end{equation}

The discretization is discussed in detail in Section 3.\\

The following results detail the theoretical numerical analysis of the Galerkin approximations \eqref{eq:acoustic_recallh}, including the stability and convergence of the numerical methods. Readers interested in the implementation and performance may again refer to Sections 3 and 4.

We first note the following mapping properties of $\mathcal{K}$ and $\mathcal{K}'$:
\begin{theorem}\label{mapping2}
The following operators are continuous for $r \in \mathbb{R}$:
\begin{align*}
 \mathcal{K}&:  L^2_w(\Xi;H_\sigma^{r+1}(\mathbb{R}^+, \widetilde{H}^{\frac{1}{2}}(\Gamma))) \to  L^2_w(\Xi;H_\sigma^{r}(\mathbb{R}^+, H^{\frac{1}{2}}(\Gamma)))\ ,\\
\mathcal{K}' &:  L^2_w(\Xi;H_\sigma^{r+1}(\mathbb{R}^+, \widetilde{H}^{-\frac{1}{2}}(\Gamma))) \to  L^2_w(\Xi;H_\sigma^{r}(\mathbb{R}^+, H^{-\frac{1}{2}}(\Gamma)))\ .
% \mathcal{W} &:  L^2_w(\randomvar;H_\sigma^{r+1}(\mathbb{R}^+, \widetilde{H}^{\frac{1}{2}}(\Gamma))) \to  L^2_w(\randomvar;H_\sigma^{r}(\mathbb{R}^+, H^{-\frac{1}{2}}(\Gamma)))\ .
\end{align*}
\end{theorem}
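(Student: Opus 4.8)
The plan is to mirror the structure of the proof of Theorem~\ref{mapthm}: since the operators $\mathcal{K}$ and $\mathcal{K}'$ act only on the space--time variables $(t,x)$ and are independent of the stochastic variable $\randomvar$, the continuity in the weighted Bochner space $L^2_w(\Xi;\cdot)$ reduces, by integrating the squared deterministic estimate against $w(\randomvar)\,d\randomvar$ over $\Xi$, to the corresponding deterministic mapping properties
\begin{align*}
\mathcal{K}&: H_\sigma^{r+1}(\mathbb{R}^+, \widetilde{H}^{\frac{1}{2}}(\Gamma)) \to H_\sigma^{r}(\mathbb{R}^+, H^{\frac{1}{2}}(\Gamma))\ ,\\
\mathcal{K}'&: H_\sigma^{r+1}(\mathbb{R}^+, \widetilde{H}^{-\frac{1}{2}}(\Gamma)) \to H_\sigma^{r}(\mathbb{R}^+, H^{-\frac{1}{2}}(\Gamma))\ .
\end{align*}
First I would record this reduction precisely, noting that measurability of $\randomvar \mapsto \mathcal{K}\varphi(\cdot,\cdot,\randomvar)$ follows from the measurability of $\varphi$ together with the boundedness (hence continuity) of the deterministic operator. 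So the real content is the deterministic statement, which I would cite from the standard time-domain boundary element literature: the mapping properties of the double layer operator $\mathcal{K}$ and its adjoint $\mathcal{K}'$ on the energy-related anisotropic Sobolev scale are classical, going back to Bamberger--Ha Duong and presented e.g.\ in \cite{hd, ha2003galerkin, sayas2016retarded}; in the specific notation used here they also appear in \cite{residual} alongside the formulation of the acoustic problem.

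For completeness I would sketch the idea behind the deterministic estimate. Using the Fourier--Laplace transform in time, one passes to the Helmholtz-type resolvent kernels at complex frequency $\omega$ with $\mathrm{Im}\,\omega = -\sigma < 0$. The transformed operators $\widehat{\mathcal{K}}(\omega)$ and $\widehat{\mathcal{K}'}(\omega)$ are the frequency-domain double layer and adjoint double layer operators, which satisfy bounds of the form $\|\widehat{\mathcal{K}}(\omega)\|_{\widetilde{H}^{1/2}(\Gamma)\to H^{1/2}(\Gamma)} \lesssim_\sigma |\omega|$ (and similarly for $\mathcal{K}'$ on the dual scale), uniformly for $\mathrm{Im}\,\omega=-\sigma$; this polynomial-in-$\omega$ behaviour is exactly what is encoded by the loss of one temporal derivative from $H_\sigma^{r+1}$ to $H_\sigma^{r}$. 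Applying the Plancherel theorem for the weighted spaces $H_\sigma^r(\mathbb{R}^+,\cdot)$ (whose norm is the $L^2$ norm over the line $\mathrm{Im}\,\omega=-\sigma$ of $|\omega|^r$ times the transform) then converts the frequency bound into the claimed space--time continuity. The mapping between the spatial trace spaces $\widetilde{H}^{\pm 1/2}(\Gamma)$ and $H^{\pm 1/2}(\Gamma)$ is the standard one for the corresponding elliptic layer potentials.

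The main obstacle is essentially bookkeeping rather than analysis: one must make sure that the spatial smoothing properties of $\mathcal{K}$ and $\mathcal{K}'$ are stated with the correct trace spaces (mapping into $H^{1/2}$ rather than $\widetilde{H}^{1/2}$, and for $\mathcal{K}'$ from $\widetilde{H}^{-1/2}$ to $H^{-1/2}$), matching the duality pairings used later in the bilinear form $a_T^s$, and that the frequency-domain bounds are uniform in $\mathrm{Im}\,\omega = -\sigma$ with a constant depending only on $\sigma$. Since all of this is available in the cited references and the stochastic variable plays no active role, the proof is short:
\begin{proof}
As in Theorem~\ref{mapthm}, the operators $\mathcal{K}$ and $\mathcal{K}'$ do not depend on the stochastic variable $\randomvar$. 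Hence, integrating the deterministic mapping estimates for $\mathcal{K}$ and $\mathcal{K}'$ on the anisotropic Sobolev spaces $H_\sigma^{r+1}(\mathbb{R}^+,\widetilde{H}^{\pm\frac{1}{2}}(\Gamma)) \to H_\sigma^{r}(\mathbb{R}^+,H^{\pm\frac{1}{2}}(\Gamma))$, see \cite{hd, ha2003galerkin, residual}, with respect to $w(\randomvar)\,d\randomvar$ over $\Xi$ yields the claimed continuity in $L^2_w(\Xi;\cdot)$. Measurability in $\randomvar$ is preserved because the deterministic operators are bounded.
\end{proof}
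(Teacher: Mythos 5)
Your proposal is correct and follows essentially the same route as the paper: the operators $\mathcal{K}$ and $\mathcal{K}'$ do not act on the stochastic variable, so the statement reduces to the deterministic mapping properties on the anisotropic Sobolev scale, which are cited from the time-domain boundary element literature. Your additional sketch of the Fourier--Laplace argument behind the deterministic estimates is consistent with those references but not needed beyond the citation, exactly as in the paper's one-line proof.
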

\begin{proof}
Because the integral operators $\mathcal{V}$ and $\mathcal{W}$ do not depend on the stochastic variable, the proof follows from the mapping properties without stochastic variables, see \cite{hd, setup}.
\end{proof}

The following theorem summarizes the continuity and coercivity properties of $a_T^s$. As above, the continuity follows from the mapping properties in Theorem \ref{mapthm} and Theorem \ref{mapping2}.
\begin{theorem}\label{APbounds} For all
  $(\varphi,p),(\psi,q) \in L^2_w(\Xi;H^1([0,T],\widetilde{H}^{\frac{1}{2}}(\Gamma))) \times L^2_w(\Xi;H^1([0,T],L^2(\Gamma)))$,
\begin{align*}
&a_T^s((\varphi,p),(\psi,q))\\& \lesssim (\|\varphi\|_{L^2_w(\Xi;H^1([0,T],\widetilde{H}^{\frac{1}{2}}(\Gamma)))}+ \|p\|_{L^2_w(\Xi;H^1([0,T],L^2(\Gamma)))})(\|\psi\|_{L^2_w(\Xi;H^1([0,T],\widetilde{H}^{\frac{1}{2}}(\Gamma)))}+ \|q\|_{L^2_w(\Xi;H^1([0,T],L^2(\Gamma)))})\ .
\end{align*}
  and
$$
\|p\|^2_{L_{w}^{2}(\Xi,H^{0}([0,T]),L^{2}(\Gamma))}+\|\dot{\varphi}\|_{L_{w}^{2}(\Xi,H^{0}([0,T]),L^{2}(\Gamma))}^2 \lesssim a_{T}((\varphi,p),(\varphi,p))\ .
$$
\end{theorem}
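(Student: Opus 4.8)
The plan is to establish the two claims of \thmref{APbounds} separately, each by reducing to the corresponding deterministic estimate and then integrating over the stochastic variable. Because the integral operators $\mathcal{V}$, $\mathcal{W}$, $\mathcal{K}$, $\mathcal{K}'$ do not act on $\randomvar$, and the coefficient $\alpha$ satisfies $0<\underline{\alpha}\leq\alpha\leq\overline{\alpha}<\infty$ $\probmeasure$-a.s., everything will come down to pointwise-in-$\randomvar$ bounds that are uniform in $\randomvar$, followed by an application of Fubini's theorem to the finite measure $w(\randomvar)\,d\randomvar$.

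For the continuity bound, I would first fix $\randomvar\in\Xi$ and bound the deterministic integrand of $a_T^s$, i.e.\ the six terms $\alpha\dot\varphi\dot\psi$, $\frac1\alpha pq$, $2\mathcal{K}'p\dot\psi$, $-2\mathcal{W}\varphi\dot\psi$, $2\mathcal{V}\dot p q$, $-2\mathcal{K}\dot\varphi q$ integrated over $[0,T]\times\Gamma$. The first two terms are handled by $\overline{\alpha}$, resp.\ $1/\underline{\alpha}$, and Cauchy--Schwarz in $L^2([0,T]\times\Gamma)$. For the four terms involving the boundary operators I would invoke \thmref{mapthm} and \thmref{mapping2} (with $r=0$, on a finite time interval, so the $H_\sigma$-norms are equivalent to the plain $H^1$, $L^2$ norms with constants depending on $T$ and $\sigma$), together with the duality pairing $\langle\cdot,\cdot\rangle$ between $\widetilde H^{\pm1/2}(\Gamma)$ and $H^{\mp1/2}(\Gamma)$: e.g.\ $\int_0^T\langle\mathcal{K}'p,\dot\psi\rangle\,dt \lesssim \|\mathcal{K}'p\|_{H^0([0,T],H^{-1/2}(\Gamma))}\|\dot\psi\|_{H^0([0,T],\widetilde H^{1/2}(\Gamma))}\lesssim \|p\|_{H^1([0,T],L^2(\Gamma))}\|\psi\|_{H^1([0,T],\widetilde H^{1/2}(\Gamma))}$, and similarly for the $\mathcal{W}$, $\mathcal{V}$, $\mathcal{K}$ terms. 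Summing the six pointwise estimates gives the deterministic continuity of $a_T$ with a constant independent of $\randomvar$; integrating against $w(\randomvar)\,d\randomvar$ and using Cauchy--Schwarz in $L^2_w(\Xi)$ on the product of the two bracketed norms then yields the stated stochastic bound.

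For the coercivity bound I would again work pointwise in $\randomvar$ and appeal to the deterministic energy estimate for the acoustic system, as established in \cite{hd,ha2003galerkin,residual}: testing $a_T((\varphi,p),(\varphi,p))$, the antisymmetric pairing of the off-diagonal operator terms combines with an integration by parts in time (using causality, $\varphi(0,\cdot)=p(0,\cdot)=0$) so that the cross terms $2\mathcal{K}'p\dot\varphi-2\mathcal{W}\varphi\dot\varphi+2\mathcal{V}\dot p p-2\mathcal{K}\dot\varphi p$ contribute a nonnegative boundary-in-time quantity (this is exactly the mechanism behind the coercivity of the single-layer and hypersingular forms in \thmref{DPbounds}), leaving $\int_0^T\int_\Gamma(\alpha|\dot\varphi|^2+\frac1\alpha|p|^2)\,ds_x\,dt \lesssim a_T((\varphi,p),(\varphi,p))$. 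Using $\alpha\geq\underline{\alpha}$ and $1/\alpha\geq 1/\overline{\alpha}$ converts the left side into $\underline{\alpha}\,\|\dot\varphi\|^2_{H^0([0,T],L^2(\Gamma))}+\frac1{\overline{\alpha}}\|p\|^2_{H^0([0,T],L^2(\Gamma))}$, and integrating over $\Xi$ against $w(\randomvar)\,d\randomvar$ gives the claim, since the right-hand side $a_T^s((\varphi,p),(\varphi,p))$ is precisely $\int_\Xi a_T((\varphi,p),(\varphi,p))\,w\,d\randomvar$ and the left-hand side is the $L^2_w$-norm. I expect the main obstacle to be citing the deterministic coercivity in the precise form needed here: the sign of the combined off-diagonal terms relies on the causal, space-time structure of the retarded potentials and on the Plancherel/Laplace-transform argument underlying \thmref{DPbounds}, so care is needed to ensure the deterministic result is quoted with the correct test functions, time interval $[0,T]$, and normalization of the factor $2$ in $a_T$; the measurability in $\randomvar$ of all quantities, needed before applying Fubini, follows as in the earlier stability theorem from the continuous dependence of the solution and potentials on the data.
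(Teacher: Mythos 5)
Your proposal follows essentially the same route as the paper: continuity from the mapping properties of \thmref{mapthm} and \thmref{mapping2} together with the uniform bounds $\underline{\alpha}\leq\alpha\leq\overline{\alpha}$, and coercivity by integrating a deterministic estimate over $\Xi$ against $w(\randomvar)\,d\randomvar$. The only refinement worth noting is that the paper quotes the \emph{exact} energy identity from \cite{hd}, $a_T^s((\varphi,p),(\varphi,p)) = 2\int_\Xi E(T,\omega)\,d\mu_\omega + \int_\Xi\int_0^T\int_\Gamma\bigl(\alpha\,(\partial_t\varphi)^2+\tfrac{1}{\alpha}\,p^2\bigr)\,ds_x\,dt\,d\mu_\omega$, so your ``nonnegative boundary-in-time quantity'' is precisely twice the total field energy $E(T,\omega)\geq 0$ of the associated solution in the exterior domain --- obtained by a Green's-identity/energy argument via the representation formula rather than by the Plancherel--Laplace mechanism underlying \thmref{DPbounds} --- after which the bounds on $\alpha$ give the claim exactly as you describe.
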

\begin{proof}
The upper estimate follows from the mapping properties of the integral operators in  Theorem \ref{mapthm} and Theorem \ref{mapping2}. The coercivity follows from Equation (64) of \cite{hd},  
$$ a_{T}^{s}((\varphi,p),(\varphi,p)) = 2\int_\Xi E(T,\omega) d\mu_\omega+ \int_\Xi \int_0^T \int_\Gamma \left( \alpha (\partial_t{\varphi}) (\partial_t{\varphi})  + \frac{1}{\alpha} p^2 \right) ds_x\, dt\ d\mu_\omega \ ,$$
where $E(T,\omega)= \frac{1}{2}\int_{\mathbb{R}^d \setminus \overline{\Omega}} \left\{(\partial_t u)^2 + (\nabla u)^2\right\}  dx$ is the total energy at time $T$. {While only time-independent $\alpha$ were considered in \cite{hd}, the proof of these formulas holds verbatim also when $\alpha$ depends on time.}
\end{proof}

We obtain the stability of the proposed method:
\begin{proposition}
Let $F \in L^2_w(\Xi;H^2([0,T],H^{-\frac{1}{2}}(\Gamma)))$, $G \in L^2_w(\Xi;H^1([0,T],H^{0}(\Gamma)))$. Then the weak form \eqref{eq:acoustic_recall} of the acoustic problem  and its discretization \eqref{eq:acoustic_recallh} admit unique solutions $(\varphi,p)\in  L^2_w(\Xi;H^1([0,T],\widetilde{H}^{\frac{1}{2}}(\Gamma)))\times L^2_w(\Xi;H^{1}([0,T], L^2(\Gamma)))$, resp.~$(\varphi_{h,\Delta t,\xi},p_{h,\Delta t,\xi})\in Z$, which depend continuously on the boundary conditions.
\end{proposition}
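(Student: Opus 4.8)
The plan is to derive existence, uniqueness and continuous dependence for \eqref{eq:acoustic_recall} and \eqref{eq:acoustic_recallh} directly from the continuity and coercivity of $a_T^s$ established in Theorem~\ref{APbounds}, together with the boundedness of the linear functional $l$. First I would verify that $l$ is a bounded linear functional on $L^2_w(\Xi;H^1([0,T],\widetilde{H}^{\frac{1}{2}}(\Gamma))) \times L^2_w(\Xi;H^1([0,T],L^2(\Gamma)))$ under the stated regularity $F \in L^2_w(\Xi;H^2([0,T],H^{-\frac{1}{2}}(\Gamma)))$, $G \in L^2_w(\Xi;H^1([0,T],H^0(\Gamma)))$: the term $\int F\dot\psi$ is handled by pairing $H^{-1/2}$ against $H^{1/2}$ after integrating by parts in time once (so that the $H^2$-regularity of $F$ is used to control $\partial_t F$ against $\psi$ itself, or alternatively by a direct duality estimate on $\dot\psi$), and the term $\int G q$ is an $L^2(\Gamma)$ pairing bounded by $\|G\|_{L^2_w(\Xi;L^2([0,T],L^2(\Gamma)))}\|q\|$. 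In both cases one integrates over $\Xi$ with the weight $w$ and applies Cauchy--Schwarz in the stochastic variable, using that the constants in the spatial/temporal estimates are deterministic.

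The coercivity estimate in Theorem~\ref{APbounds} is not an inf--sup bound on the full energy norm but only controls $\|p\|_{L^2_w(\Xi;L^2([0,T],L^2(\Gamma)))}^2 + \|\dot\varphi\|_{L^2_w(\Xi;L^2([0,T],L^2(\Gamma)))}^2$, i.e.\ a weaker norm than the one in which $a_T^s$ is continuous. This is the familiar situation for the energetic weak formulation of retarded potentials, and the standard remedy is a Lax--Milgram/Lions-type argument at the discrete level combined with a limiting (Galerkin) argument for the continuous problem. Concretely, I would invoke Theorem~6 of \cite{graded} (already cited in the excerpt for the Dirichlet/Neumann stability) in exactly the same way: in the deterministic setting that reference establishes well-posedness of \eqref{eq:acoustic_recall} and \eqref{eq:acoustic_recallh} from the very inequalities of Theorem~\ref{APbounds}, and since the operators $\mathcal{V},\mathcal{W},\mathcal{K},\mathcal{K}'$ and the energy identity do not involve $\randomvar$, the stochastic statement follows by integrating the deterministic a priori bound over $\Xi$ against $w\,d\randomvar$. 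Uniqueness at the continuous level follows from the coercivity applied to the difference of two solutions, which forces $p=0$ and $\dot\varphi=0$, hence $\varphi=0$ by causality ($\varphi=0$ for $t\le 0$); uniqueness at the discrete level is immediate from finite-dimensionality once existence is shown, or directly from coercivity on $Z$.

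For the continuous dependence I would simply chain the estimates: from the deterministic a priori bound (Theorem~6 of \cite{graded}, applied $\probmeasure$-a.s.) one gets $\|(\varphi,p)\|_{r,s,\Gamma} \lesssim_\sigma \|(F,G)\|$ pointwise in $\randomvar$, then square, integrate against $w\,d\randomvar$, and use the monotone convergence / Fubini to land the bound in the $L^2_w(\Xi;\cdot)$ norms; measurability of $\randomvar \mapsto (\varphi(\cdot,\cdot,\randomvar),p(\cdot,\cdot,\randomvar))$ follows from the continuity of the solution operator, exactly as remarked after the Dirichlet/Neumann stability theorem. The main obstacle, and the only genuinely non-routine point, is the gap between the coercivity norm and the continuity norm in Theorem~\ref{APbounds}: one cannot apply Lax--Milgram naively, and the well-posedness really rests on the finer space--time analysis of \cite{hd,residual,graded}; so the honest proof is to reduce cleanly to those results and to check that the polynomial chaos / stochastic tensor structure introduces no new difficulty, which it does not because $a_T^s$ acts as the deterministic form fibrewise in $\randomvar$.
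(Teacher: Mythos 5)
Your proposal is correct and follows essentially the same route as the paper, which states the proposition without an explicit proof and implicitly relies on the continuity and weak coercivity of $a_T^s$ from Theorem~\ref{APbounds} together with the deterministic well-posedness theory (there drawn from \cite{hd,ha2003galerkin,residual} rather than Theorem~6 of \cite{graded}, which the paper invokes only for the Dirichlet/Neumann case), integrated fibrewise over $\Xi$. Your explicit acknowledgement that the coercivity norm is weaker than the continuity norm, so that Lax--Milgram does not apply naively and the argument must reduce to the finer deterministic space--time analysis, is a point the paper glosses over but handles in exactly the way you describe.
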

{As in Subsection \ref{sec:dirneu}, the continuity and (weak) coercivity of the bilinear form  allow to conclude the convergence of the Galerkin solutions in \eqref{eq:acoustic_recallh}}.  Details are omitted here.

%%%%%%%%%%%%%%%%%%%%%%%%%%%%%%%%%%%%%%%%
%%%%%%%%%%%%%%%SECTION%%%%%%%%%%%%%%%
%%%%%%%%%%%%%%%%%%%%%%%%%%%%%%%%%%%%%%%%
\section{Stochastic space time boundary element method}\label{sec:SSTBEM}
This section establishes the space-time and stochastic discretization of \eqref{DP}, 
\eqref{NP} and \eqref{eq:acoustic_recall}. We
first consider the stochastic discretization which
uses the Stochastic Galerkin (SG) method.
The resulting SG system is the solved in space-time
by a Petrov-Galerkin method.
%%%%%%%%%%%%%%%%%%%%%%%%%%%%%%%%%%%%%%%%
%%%%%%%%%%%%%%%SUBSECTION%%%%%%%%%%%%%%%
%%%%%%%%%%%%%%%%%%%%%%%%%%%%%%%%%%%%%%%%
\subsection{Stochastic Galerkin discretization}
In order to derive the SG discretization
we expand the solution of
\eqref{DP}, \eqref{NP}, or \eqref{eq:acoustic_recall} 
into a generalized Fourier series
using  a suitable orthonormal basis. To do so, we assume that the  stochastic space $\Xi = \prod_{\stochdim=1}^\infty [a_\stochdim,b_\stochdim]$ is a product of intervals $[a_\stochdim,b_\stochdim]$, $\stochdim \in \N$ and that the probability measure on $\Xi$ is a product of probability measures on $[a_\stochdim,b_\stochdim]$, $\probmeasure(\randomvar)=\prod_{\stochdim=1}^\infty \probmeasure_\stochdim(\randomvar_\stochdim)$.
Let $\{\Psi_n^m\}_{n=0}^\infty: \Xi \to \R$ be an
orthonormal basis w.r.t. to the probability measure
$\probmeasure_\stochdim$, i.e. for all $i,j \in \N_0$ we have
\begin{align} \label{eq:orthonormal}
\int_{\R} \Psi_i^m(\randomvar) \Psi_j^m(\randomvar)  ~\mathrm{d}\probmeasure_\stochdim(\randomvar) = \delta_{i,j}.
\end{align}
Furthermore, we define the corresponding polynomial
approximation space for a  fixed polynomial degree of $J \in \N_0$ as follows, 
$\mathcal{P}_J(\Xi):= \{p: \Xi \to \R ~|~ p \text{ is a polynomial of degree } J\}$.
We let 
$\N_0^{\N}:= \{
\kappa= (\kappa_\stochdim)_{\stochdim \in \N}, \kappa_\stochdim \in \N_0, \text{ for all } \stochdim \in \N \}$,
and for $J\in \N_0$ we define the truncated multi-index set
\begin{align}\label{def:truncatedIndexSet}
\mathcal{\SGOrder}:= \{
\kappa= (\kappa_\stochdim)_{\stochdim \in \N}, ~\kappa_\stochdim\leq J , \text{ for all } \stochdim \in \N \}.
\end{align} 
By
\begin{align} \label{def:orthonormalPolynomial}
\Psi_\kappa(\randomvar) := \prod_{m=0}^n \Psi_{\kappa_m}^m(\randomvar_m)\quad \text{for all } \kappa \in \N_0^{\N}
\end{align} 
we denote the corresponding multivariate orthonormal polynomials 
and we define the tensor-polynomial approximation space of polynomials of degree $\SGOrder \in \N_0$,
$$
\polyapproxspace:= \bigotimes\limits_{\kappa \in \mathcal{\SGOrder}} 
\mathcal{P}_{\kappa_\stochdim}(\Xi) .
$$
Following \cite{ErnstUllmann2012,XiuKarniadakis2002}, 
the solution $\phi$ of \eqref{DP} and \eqref{NP} can be written as
\begin{align} \label{eq:fourierSeries}
\sol(t,x,\randomvar)= \sum \limits_{\kappa \in \N_0^{\N}} \phi^\kappa(t,x)\Psi_\kappa(\randomvar).
\end{align}
Moreover, we truncate the infinite series in \eqref{eq:fourierSeries} 
at polynomial degree $\SGOrder \in \N_0$  and write,
using the index set \eqref{def:truncatedIndexSet}, 
\begin{align} \label{eq:ansatz}
  \sol(t,x,\randomvar)= \sum \limits_{\kappa \in \mathcal{\SGOrder}} \phi^\kappa(t,x)\Psi_\kappa(\randomvar).
  \end{align}
The deterministic Fourier modes
$ \phi^\kappa= \phi^\kappa(t,x)$ in \eqref{eq:fourierSeries} are defined by
\begin{align} \label{def:FourierMode}
\phi^\kappa(t,x)= \E (\sol(t,x,\cdot)  \Psi_\kappa(\cdot))
 = \int \limits_\Xi \sol(t,x,\randomvar)  \Psi_\kappa(\randomvar) ~\mathrm{d} \probmeasure(\randomvar)
\qquad\forall~ \kappa \in \N_0^n.
\end{align}
From the Fourier modes \eqref{def:FourierMode} we immediately extract expectation and variance 
of $\sol$ via
$$\E (\sol(t,x,\cdot))= \sol^0(t,x) \text{ and }
\text{Var}( \sol(t,x,\cdot))=\sum \limits_{\kappa \in \N_0^{\N}\backslash (0,0,\ldots)} \sol^{\kappa}(t,x)^2.$$

Using the ansatz \eqref{eq:ansatz}
and testing against
$\psi(t,x,\randomvar)_\kappa:=\psi(t,x)\Psi_\kappa(\randomvar)$
yields, in combination with the
the orthonormality relation \eqref{eq:orthonormal},
the following SG system 
of \eqref{DP}
\begin{align} \label{eq:SGsystem}
B_{D}^s(\sol^\kappa(t,x,\randomvar),\psi(t,x,\randomvar)_{\kappa}) = 
(\partial_t f(t,x,\randomvar) , \psi(t,x,\randomvar)_{\kappa})
\end{align}
for all $\kappa \in \mathcal{\SGOrder}$.

{Using the same ansatz and test functions in stochastics yield the following SG System for \eqref{eq:acoustic_recall}:
 \begin{equation}\label{eq:acoustic_stochdisc}
a_T^s((\varphi^{\kappa},p^{\kappa}), (\psi \Psi_{j},q \Psi_{j})) = l(\psi \Psi_{j}, q \Psi_{j}).
\end{equation}}
{For our computations in Section \ref{sec:numericalExpetiments}, we choose $\Xi=[-1,1]^{n+1}$ with Legendre polynomials of degree $\SGOrder$ as basis functions.}

%%%%%%%%%%%%%%%%%%%%%%%%%%%%%%%%%%%%%%%%
%%%%%%%%%%%%%%%SUBSECTION%%%%%%%%%%%%%%%
%%%%%%%%%%%%%%%%%%%%%%%%%%%%%%%%%%%%%%%%
\subsection{Space-time discretization of the Stochastic Galerkin system of the stochastic Dirichlet problem}\label{sec:stdisc_of_stochdir}
{ For the discretization in a space-time setting, we use a tensor product ansatz, dividing the space from the time components. In space, we mesh the hypersurface $\Gamma$ into triangular faces $\Gamma_{i}$ with diameter $h_{i}$ and $h=\max_{i} h_{i}$. Here, we introduce the space of piecewise polynomial functions $V_{h}^{p}$ with degree $p\geq 0$. Further the corresponding basis $\{ \varphi_{j}^{p} \}$ is continuous fpr $p \geq 1$. In time, we decompose $\mathbb{R}^{+}$ into subinervals $I_{n}=[t_{n-1},t_{n})$ with time step $(\Delta t)_{n}=| I_{n} |$ for $n=1,\ldots , $ and $(\Delta t)= \sup_{n} (\Delta t)_{n}$. Here, we denote with $\{\beta_{j}^{q} \}$ a corresponding basis of the space $V_{(\Delta t)}^{q}$ of piecewise polynomial functions of degree $q \geq 0$. The basis functions are continuous and vanish at $t=0$ for $q\geq 1$. Therefore, we divide the space-time cylinder $\mathbb{R}^{+} \times \Gamma = \bigcup_{j,k}[t_{j-1}, t_j)\times \Gamma_k$ by local tensor products $V_{h,\Delta t}^{p,q}=V_{h}^{p} \otimes V_{(\Delta t)}^{q}$ with the basis functions $\{ \beta_{j}^{q} \varphi_{k}^{p} \}$. Altogether the $\kappa-$th (space-time dependent) stochastic mode in \eqref{eq:SGsystem} is given by $\phi_{h,\Delta t}^{\kappa} \in V_{h,\Delta t}^{p,q}$ for all $\kappa \in \SGOrder$. Finally the fully discrete numerical approximaion of \eqref{DP} becomes
\begin{align} \label{def:fullydiscrete}
\phi_{h,\Delta t}^\SGOrder(t,x,\randomvar):= \sum \limits_{\kappa \in \mathcal{\SGOrder}}
\phi^\kappa_{h, \Delta t}(t,x) \Psi_\kappa(\randomvar) \in  V:=
V^{p,q}_{h,\Delta t} \otimes \polyapproxspace .
\end{align} 
This leads to the following discrete variational formulation of \eqref{DPh}: Find $\phi_{h,\Delta t}^{\SGOrder} \in V$ such that for all $l \in \SGOrder$ and $\psi_{h,\Delta t}^{l} \in V_{h,\Delta t}^{p,q}$
 \begin{align} \label{eq:SGdisc}
B_{D}^s(\sol^\SGOrder_{h,\Delta t},\psi^l_{h,\Delta t} \Psi_l) = 
(\partial_t f , \psi^l_{h,\Delta t}\Psi_l \big) .
\end{align}
For our computations in Section \ref{sec:numericalExpetiments}, we set a uniform space mesh with diameter $h$ and an equidistant timemesh with length $\Delta t$. Furthermore we choose piecewise constant ansatz and test functions in space and time, i.e. $\phi_{h,\Delta t}^{\kappa}, \psi_{h,\Delta t}^{l} \in V_{h,\Delta t}^{0,0}$. This leads to a lower triangular space-time Toeplitz matrix system. As usual, we solve this space-time system by backsubstitution, leading to a marching-on-in-time (MOT) time stepping scheme \cite{terrasse}. However, we have to note the stochastic contribution. Benefitting from the orthonormal polynomial basis (see \eqref{eq:orthonormal}), we get a block diagonal matrix system in the stochastic space time framework (see \eqref{eq:appstochspactimsys}), where we apply the MOT scheme $(\SGOrder+1)^{n}$ times (see \eqref{eq:appmotdirstoch}). For more details about the computation, {see} the Appendix \ref{appendix:discstochdir}.
}
\subsection{Space-time discretization of the Stochastic Galerkin system of the stochastic acoustic problem}
{ The stochastic space-time discretization spaces remain the same as in the subsections before. Further we compuate the stochastic acoustic problem on a uniform mesh with equidistant $(\Delta t)$ and $\Xi=[-1,1]^{n+1}$ with Legendre polynomials of degree $\SGOrder$. {For simplicity, we detail the implementation for a time-independent absorption coefficient $\alpha$, which we take to be of the form} $\alpha(x,\xi)=\alpha_{1}(x)\alpha_{2}(\xi)$. %where, we apply a uniformly distributed $\alpha_{2}$. 
In space-time, we choose the following ansatz and test functions:
}
\begin{itemize}
\item Ansatz function $\phi_{h,\Delta t}^{\kappa} \in V_{h,\Delta t}^{1,1}$ piecewise linear in space and time
\item Ansatz function $p_{h,\Delta t}^{\kappa} \in V_{h, \Delta t}^{0,1}$ piecewise constant in space and piecewise linear in time
\item Test function $\dot{\psi}_{h,\Delta t}^{l} \in V_{h,\Delta t}^{1,0}$ piecewise linear in space and piecewise constant in time
\item Test function $q_{h,\Delta t}^{l} \in V_{h,\Delta t}^{0,0}$ piecewise constant in space and time
\end{itemize}

The Galerkin discretization \eqref{eq:acoustic_recallh} contains the retarded potential operators, namely the single layer potential $\mathcal{V}$, the double layer potential $\mathcal{K}$, the adjoint double layer potential $\mathcal{K}'$ and the hypersingular integral operator $\mathcal{W}$. For the precise discretization of these retarded operators with this choice of ansatz and test functions in space and time, we refer to the Appendix in \cite{wave-wave-coupling}. This leads again to a space-time system for which we apply the MOT scheme. Unfortunately, since \eqref{eq:acoustic_recallh} depends on $\alpha$ as well as $\frac{1}{\alpha}$, we lose the stochastic block space-time diagonal system. At least we are able to separate the stochastic space from the time, leading to a lower triangle block stochastic space Toeplitz matrix, where we solve this system via an MOT scheme with stochastic space matrices. 

Precisely, for every time step {we solve:}
%%%%%%%%%%%%%%%%%%%%%%%%%%%%%
{\small{ 
\begin{align}\label{eq:StochasticAcousticDiscreteSystem}
  &\begin{pmatrix} \frac{\Delta t}{2} S(\frac{1}{\alpha_{2}}) \otimes I_{const}(\frac{1}{\alpha_{1}}) + 2 diag(V^{0}) & 2 diag(K^{0}) \\ 2 diag({K'}^{0}) & S(\alpha_{2}) \otimes I_{lin}(\alpha_{1}) - 2 diag({W}^{0})  \end{pmatrix}
	\begin{pmatrix} p^{o} \\ \varphi^{o} \end{pmatrix} = \\ &\hspace*{-0.4cm}\begin{pmatrix} G - \frac{\Delta t}{2} S(\frac{1}{\alpha_{2}}) \otimes I_{const}(\frac{1}{\alpha_{1}}) p^{o-1} -2 \sum_{m=1}^{o-1} diag(V^{o-m}) p^{m}- 2 \sum_{m=1}^{o-1} diag(K^{o-m}) \varphi^{m} \nonumber \\ 
	F + S(\alpha_{2}) \otimes I_{lin}(\alpha_{1}) \varphi^{o-1}  + 2 \sum_{m=1}^{o-1} diag(W^{o-m})  \varphi^{o} - 2 \sum_{m=1}^{o-1} diag({K'}^{o-m}) p^{m} \end{pmatrix}
\end{align} 
}}
with $\otimes$ the Kronecker-Product and $p^{o}$ resp. $\varphi^{o}$ contains stochastic entries $\begin{pmatrix} p_{0}^{o} \\ \vdots \\ p_{(\SGOrder+1)^{n}-1}^{o} \end{pmatrix}$ resp. $\begin{pmatrix} \varphi_{0}^{o} \\ \vdots \\ \varphi_{(\SGOrder+1)^{n}-1}^{o} \end{pmatrix}$, where each stochastic entry contains entries in space. Further
\begin{equation*}
S_{i,j}\left(\frac{1}{\alpha_{2}}\right) := \int_{\mathbb{R}} \frac{1}{\alpha_{2}(\xi)} \Psi_{i}(\xi) \Psi_{j}(\xi) w(\xi) d\xi ,
\end{equation*}
and $I_{const}$ resp. $I_{lin}$ the identity with constant ansatz and test functions in space resp. linear ansatz and test functions in space.
$V^{k},K^{k},{K'}^{k},W^{k}$ at timestep $k+1$, $k\in \mathbb{N}_{0}$ are space matrices for the corresponding single layer, double layer, adjoint double layer and hypersingular integral operators. For more detals on these retarded potential matrices, see \cite{wave-wave-coupling,review} and for more details on how to construct \eqref{eq:StochasticAcousticDiscreteSystem} {see} the Appendix \ref{appendix:discstochacoustic}.

%%%%%%%%%%%%%%%%%%%%%%%%%%%%%%%%%%%%%%%%
\section{Numerical experiments}
\label{sec:numericalExpetiments}
%%%%%%%%%%%%%%%%%%%%%%%%%%%%%%%%%%%%%%%%
%%%%%%%%%%%%%%%SUBSECTION%%%%%%%%%%%%%%%
%%%%%%%%%%%%%%%%%%%%%%%%%%%%%%%%%%%%%%%%
\subsection{Numerical example for the single layer potential with stochastic Dirichlet data} 
\label{subsec:singlelayerStochastic}
In this numerical example we consider the stochastic Dirichlet Problem \eqref{DP} on the unit sphere 
$\Gamma = S^2$ (see \figref{fig:SphereMesh} for a 2nd and 4th refinement level) with time $[0,2]$ and $\Xi=[-1,1]^3$. The stochastic right hand side with $k=(0.2,0.2,0.2)$ is given via 
\begin{align*}
f(t,x,\randomvar) = \exp{\Big(\randomvar_1-\frac{1}{10t^2}\Big)} \Big( \cos(\randomvar_2 + \randomvar_3)\cos(|k|t-k\cdot x)  
+ \sin(\randomvar_2 + \randomvar_3)\sin(|k|t-k\cdot x)   \Big).
\end{align*}
The spatial and temporal, uniform refinements are prescribed
in \tabref{table:meshesV}. The CFL (Courant-Friedrichs-Levy) rate is fixed at $0.605$.

To quantify the error in the numerical approximation
we consider the energy norm given by 
\begin{align} \label{def:energyError}
E(\phi_{h,\Delta t}^\SGOrder ):= & \frac{1}{2} 
\int_\Xi \int_{\mathbb{R}^+}\int_\Gamma \mathcal{V}
 \Big(\partial_t \phi_{h,\Delta t}^\SGOrder )\Big) 
 \phi_{h,\Delta t}^\SGOrder\ ds_x \ d_\sigma t \ d \probmeasure(\randomvar)
\\ & - 
\int_\Xi \int_{\mathbb{R}^+}\int_\Gamma \partial_t
f(t,x,\randomvar) \phi_{h,\Delta t}^\SGOrder \ ds_x \ d_\sigma t \  d \probmeasure(\randomvar).  \notag
\end{align}
Upon using the orthonormality relation \eqref{eq:orthonormal}, the expression in
\eqref{def:energyError} can be simplified to
\begin{align} \label{def:simplerEnergyError}
 E(\phi_{h,\Delta t}^\SGOrder )= \sum \limits_{\kappa \in \mathcal{\SGOrder}}
  \Big(\frac{1}{2}\int_{\mathbb{R}^+}\int_\Gamma \mathcal{V} 
  \Big(\partial_t \phi_{h,\Delta t}^\kappa \Big)  \phi_{h,\Delta t}^\kappa \ ds_x \ d_\sigma t 
 - 
\int_{\mathbb{R}^+}\int_\Gamma \partial_t
f(t,x,\randomvar) \phi_{h,\Delta t}^\kappa \ ds_x \ d_\sigma t  \Big).
\end{align}
To obtain a corresponding benchmark energy we either consider
a sufficient fine stochastic resolution or perform an extrapolation of the energy with respect to the
space-time degrees of freedom (DOF). %We detail both 
%procedures in the following sections.

\begin{table}
\centering
\begin{tabular}{|l|l|l|} 
 \multicolumn{3}{c}{ }\\
 \cline{1-3}	
 Refinement level & no. spatial elements & no. time-steps  \\
 \hline
 1 & 80      & 5  \\
 \cline{1-3}
 2 & 320     & 10 \\
 \cline{1-3}
 3 & 1280    & 20 \\
 \cline{1-3}
 4 & 5120    & 40 \\ 
 \cline{1-3}
 5 & 20480   & 80 \\
 \hline
\end{tabular}
\caption{Space-time mesh hierarchy for the numerical
experiment from \secref{subsec:singlelayerStochastic}. 2nd and 4th refinement level depicted in \figref{fig:SphereMesh} }
\label{table:meshesV}
\end{table}

We are first interested
in the convergence of the numerical approximation
with respect to the number of orthonormal polynomials, i.e. an increasing SG polynomial degree,
for a fixed space-time mesh. In order to compute a benchmark energy we compute the numerical approximation 
for the corresponding refinement level with a SG polynomial degree of 12.
The mean and variance of the benchmark solution is depicted in \figref{fig:VMeanAndVarT3} for $T=2$.

In \figref{fig:VError}a) we plot the relative
error between the approximated energy and the
benchmark energy versus the SG polynomial degree in one stochastic dimension, where we fix the space-time mesh to be either the fourth or fifth refinement level, cf. \tabref{table:meshesV}.
We can see clearly that the numerical error decays
spectrally, which we expect from approximating
a smooth solution with orthonormal polynomials.
In \figref{fig:VError}b) we also 
plot the convergence of the approximated energy 
with respect to increasing space-time DOFs
(cf. \tabref{table:meshesV}).
In order to compute the benchmark energy, 
we fix the SG polynomial degree to fourth and five and 
perform an extrapolation of the energy with regard to the space-time DOF.
To estimate the rate of convergence we use a linear least squares
fit using all data points. We compute a slope of roughly one-half, which 
corresponds to a rate of one half in terms of the space-time DOFs
and in terms of the mesh width $h$ to a rate of $3/2$.
	
\begin{figure}
  \includegraphics[trim=180mm 10mm 180mm 70mm, clip,width=0.50 \linewidth]{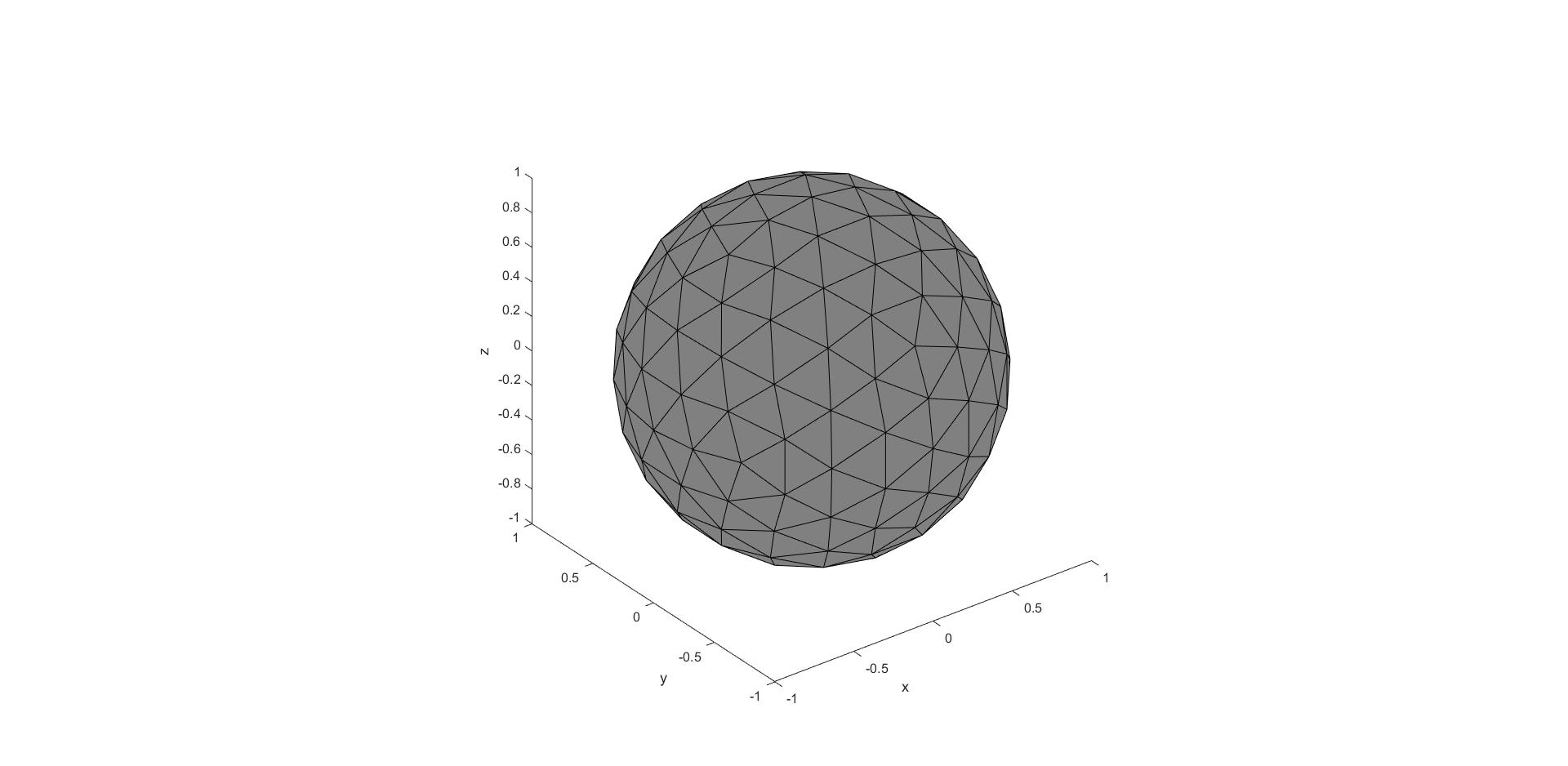}
  \includegraphics[trim=180mm 10mm  180mm 70mm, clip,width=0.50 \linewidth]{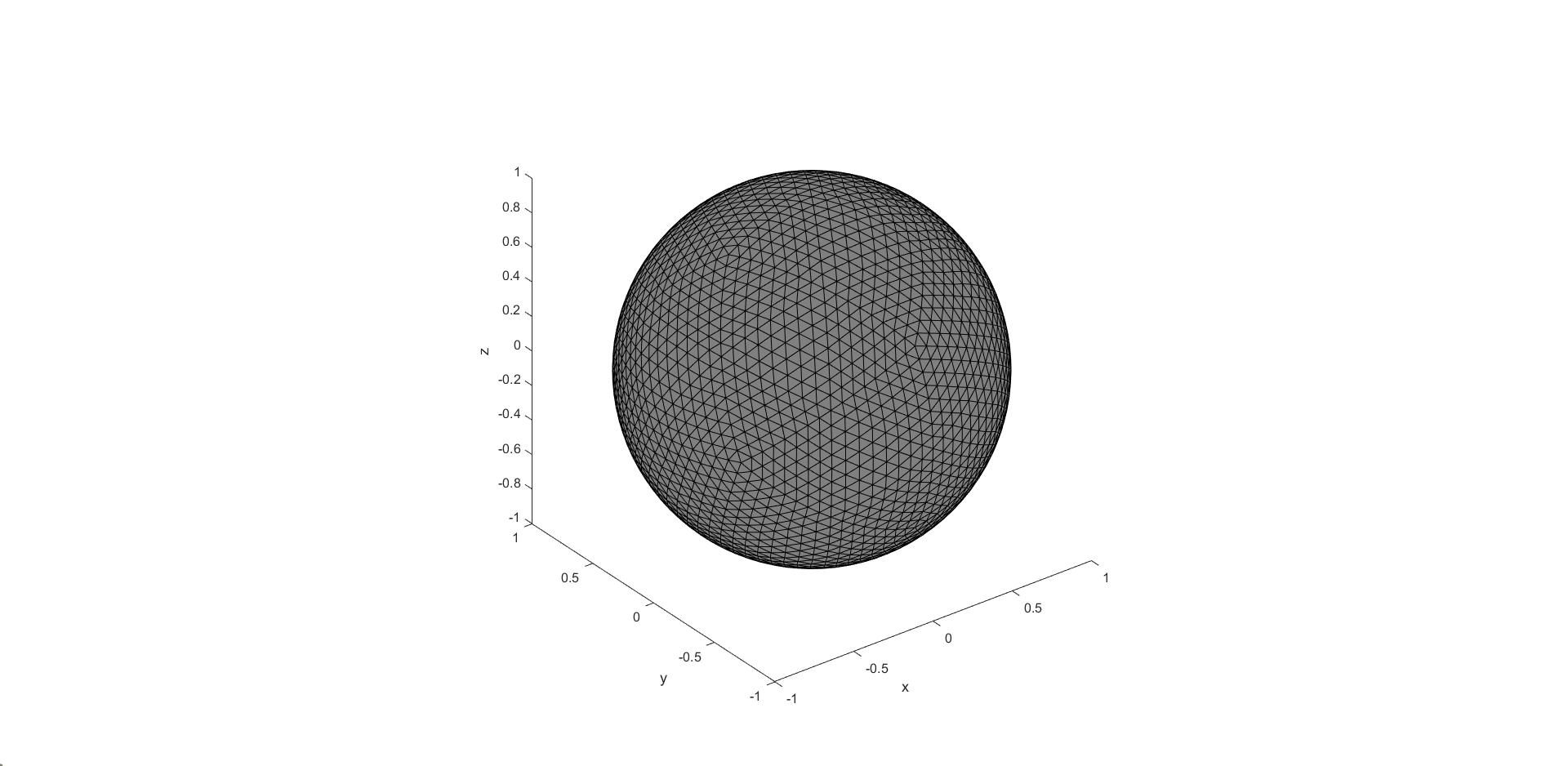}
  \caption{2nd and 4th spatial refinement level of the sphere 
  \secref{subsec:singlelayerStochastic}.}
  \label{fig:SphereMesh}
\end{figure}

\begin{figure}
  \includegraphics[trim=0mm 0mm 0mm 6mm, clip,width=0.55 \linewidth]{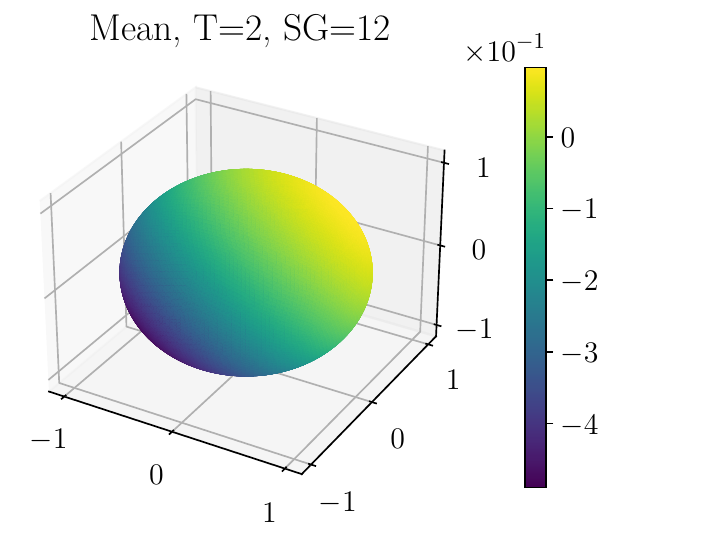}
  \includegraphics[trim=0mm 0mm 0mm 10mm, clip,width=0.55 \linewidth]{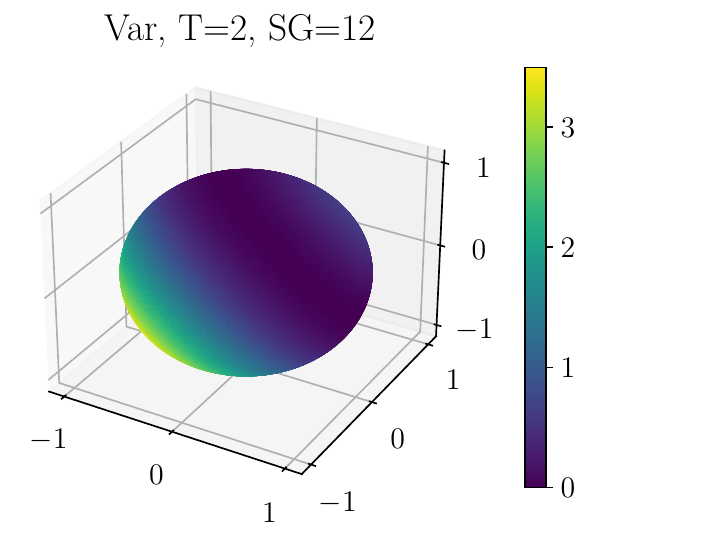}
  \caption{Plot of a) mean and b) variance of benchmark solution at time $T=2$, refinement level 5 and SG polynomial degree 12. 
  Numerical experiment from 
  \secref{subsec:singlelayerStochastic}.}
  \label{fig:VMeanAndVarT3}
\end{figure}

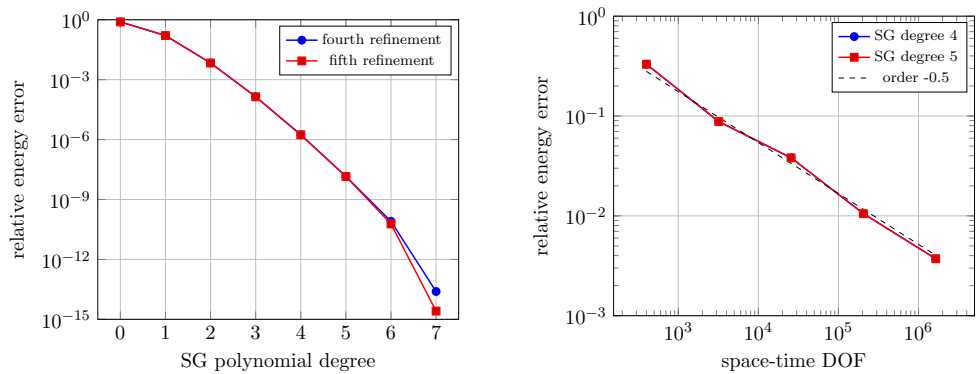
\begin{figure}
  \centering
\begin{tikzpicture}[scale=0.7]
\begin{axis}[
xtick={0,1,2,3,4,5,6,7},
xticklabels={0,1,2,3,4,5,6,7},
xmin = -0.5, xmax = 7.5,
ymin = 1e-15, ymax = 1,
ymode = log,
ylabel = {relative energy error},
xlabel = {SG polynomial degree},
xmode = linear,
grid = major,
cycle list name = color,
legend style={font=\scriptsize},
]
\addplot 
  table[row sep=crcr]{%
0			0.782952588418405 \\
1			0.161692794269475 \\
2			0.006857353957845 \\
3			0.000139428771247 \\
4			1.73997908822848E-06 \\
5			1.42946731827474E-08 \\
6			8.09494585428704E-11\\
7			2.47499255250684E-14 \\
};
\addlegendentry{fourth refinement}
\addplot
  table[row sep=crcr]{%
0			0.783495461867189 \\
1			0.161684187842095 \\
2			0.00685743828021 \\
3			0.000139424983689 \\
4			1.73983262217528E-06 \\
5			1.42160875067962E-08 \\ 
6			5.9732828396437E-11 \\
7			2.58656500525325E-15 \\
};
\addlegendentry{fifth refinement}
\end{axis}
\end{tikzpicture}
\qquad 
\begin{tikzpicture}[scale=0.7]
  \begin{axis}[
  xmin = 0, xmax = 5000000,
  ymin = 0.001, ymax = 1,
  ymode = log,
  ylabel = {relative energy error},
  xlabel = {space-time DOF},
  xmode = log,
  grid = major,
  cycle list name = color,
  legend style={font=\scriptsize},
  ]
  \addplot 
    table[row sep=crcr]{%
  400			0.330116440791064 \\
  3200			0.08796265658808 \\
  25600			0.038187588354382 \\
  204800			0.010533473160883 \\
  1638400			0.003724145150753 \\
  };
  \addlegendentry{SG degree 4}
  \addplot
    table[row sep=crcr]{%
  400			0.330116441772006 \\
  3200			0.087962656717812 \\
  25600			0.038187588357809 \\
  204800			0.010533473057372 \\
  1638400			0.003724145114157\\
  };
  \addlegendentry{SG degree 5}
  \addplot [color=black,dashed]
    table[row sep=crcr]{%
  400	2.80E-01 \\
  1638400	4.03E-03\\
  };
  \addlegendentry{order -0.5}
  \end{axis}
  \end{tikzpicture}
\caption{Plot of relative energy errors for a) a fixed space-time mesh and b) a fixed SG polynomial degree. Numerical experiment from 
\secref{subsec:singlelayerStochastic}.}
\label{fig:VError}
\end{figure}

%%%%%%%%%%%%%%%%%%%%%%%%%%%%%%%%%%%%%%%%
%%%%%%%%%%%%%%%SUBSECTION%%%%%%%%%%%%%%%
%%%%%%%%%%%%%%%%%%%%%%%%%%%%%%%%%%%%%%%%
\subsection{Numerical examples for an acoustic problem}
\label{subsec:Acoustic}
In this numerical example we solve
the stochastic acoustic problem
\eqref{eq:rechte_recall} on the unit cube $\Gamma =[-1,1]^3$ (see \figref{fig:CubeMesh} for a 2nd and 4th refinement level), time $[0,3]$ and $\Xi=[-1,1]$. The spatial and temporal, uniform refinements are shown in \tabref{table:meshesAcoustic}, where we hold the CFL rate at $0.387$. The right-hand side is deterministic and given by
%and a deterministic right-hand side given by

\begin{align*}
  &G(t,x)=F(t,x) = \Big(\big(2 \pi (t\lVert x\rVert-\lVert x\rVert^2) \sin(\tfrac{\pi(\lVert x\rVert-t)}{0.9})+0.9 t (1+\cos(\tfrac{\pi(\lVert x\rVert-t)}{0.9}))\big) \frac{(1+\cos(\tfrac{\pi(\lVert x\rVert-t)}{0.9}))}{1.8 \lVert x\rVert^3 } x {\vec{n}}_{x} \\ &- \frac{1}{1.8 \lVert x\rVert} \big( (1+\cos(\tfrac{\pi(\lVert x\rVert-t)}{0.9})) (2 \pi (t-\lVert x\rVert)) \sin(\tfrac{-\pi(\lVert x\rVert-t)}{0.9}) -0.9  (1+\cos(\tfrac{\pi(\lVert x\rVert-t)}{0.9})) \big) \Big) H(0.9-|\lVert x\rVert - t|) .
\end{align*}
Here $\lVert x \rVert$ is the euclidean norm of $x$, ${\vec{n}}_{x}$ the exterior normal vector of the face $\Gamma_{i}$, where $x$ lies and $H$ the Heaviside function. 

%\fmnote{TODO: Ceyhun kannst du bitte die rechte Seite aufschreiben? DONE
%Und wie genau die Ansatz und Testfunktionen aussehen? see Chapter 3.3 ad 3.4 to add}

%We compute the numerical solution up to 
%time $T=3$. 
Since we are using different ansatz and test functions we are not able
to compute the energy of the solution, but rather quantify the error in the 
$L^2((0,T)\times\Gamma,L^2(\Xi))$-norm,
i.e. we consider the difference 
$$\big | \|u_h\|_{L^2((0,T)\times\Gamma,L^2(\Xi))}- \|u_{\text{ref}}\|_{L^2((0,T)\times\Gamma ,L^2(\Xi))}\big| . $$
Similarly to the stochastic Dirichlet problem we compute the benchmark 
norm either by considering a sufficiently fine 
stochastic resolution or we perform an extrapolation of the norm with respect to the space-time DOFs.
The corresponding space-time refinement levels are detailed in \tabref{table:meshesAcoustic}.
\begin{table}
\centering
\begin{tabular}{|l|l|l|} 
 \multicolumn{3}{c}{ }\\
 \cline{1-3}	
 Refinement level & no. spatial elements & no. time-steps  \\
 \hline
 1 & 74      & 3  \\
 \cline{1-3}
 2 & 290     & 6 \\
 \cline{1-3}
 3 & 1154    & 12 \\
 \cline{1-3}
 4 & 4610    & 24 \\ 
 \cline{1-3}
 5 & 18434   & 48 \\
 \hline
\end{tabular}
\caption{Space-time mesh hierarchy for the numerical
experiment from \secref{subsec:Acoustic}. The 2nd and 4th spatial refinement level are displayed in \figref{fig:CubeMesh}. }
\label{table:meshesAcoustic}
\end{table}

\begin{figure}
  \hspace*{1.5cm}\includegraphics[trim=180mm 10mm 180mm 20mm, clip,width=0.40 \linewidth]{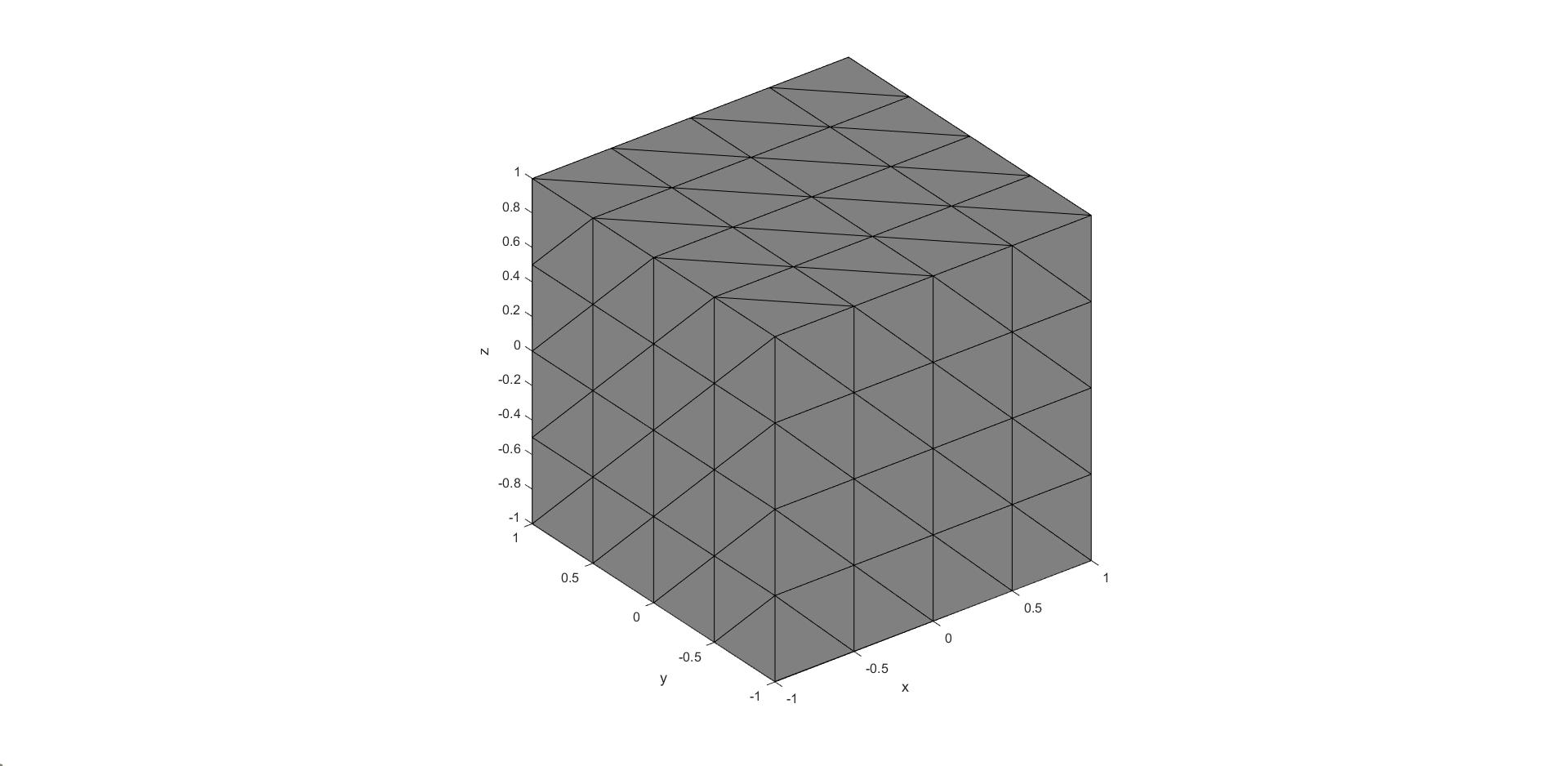}
  \includegraphics[trim=180mm 10mm  180mm 20mm, clip,width=0.40 \linewidth]{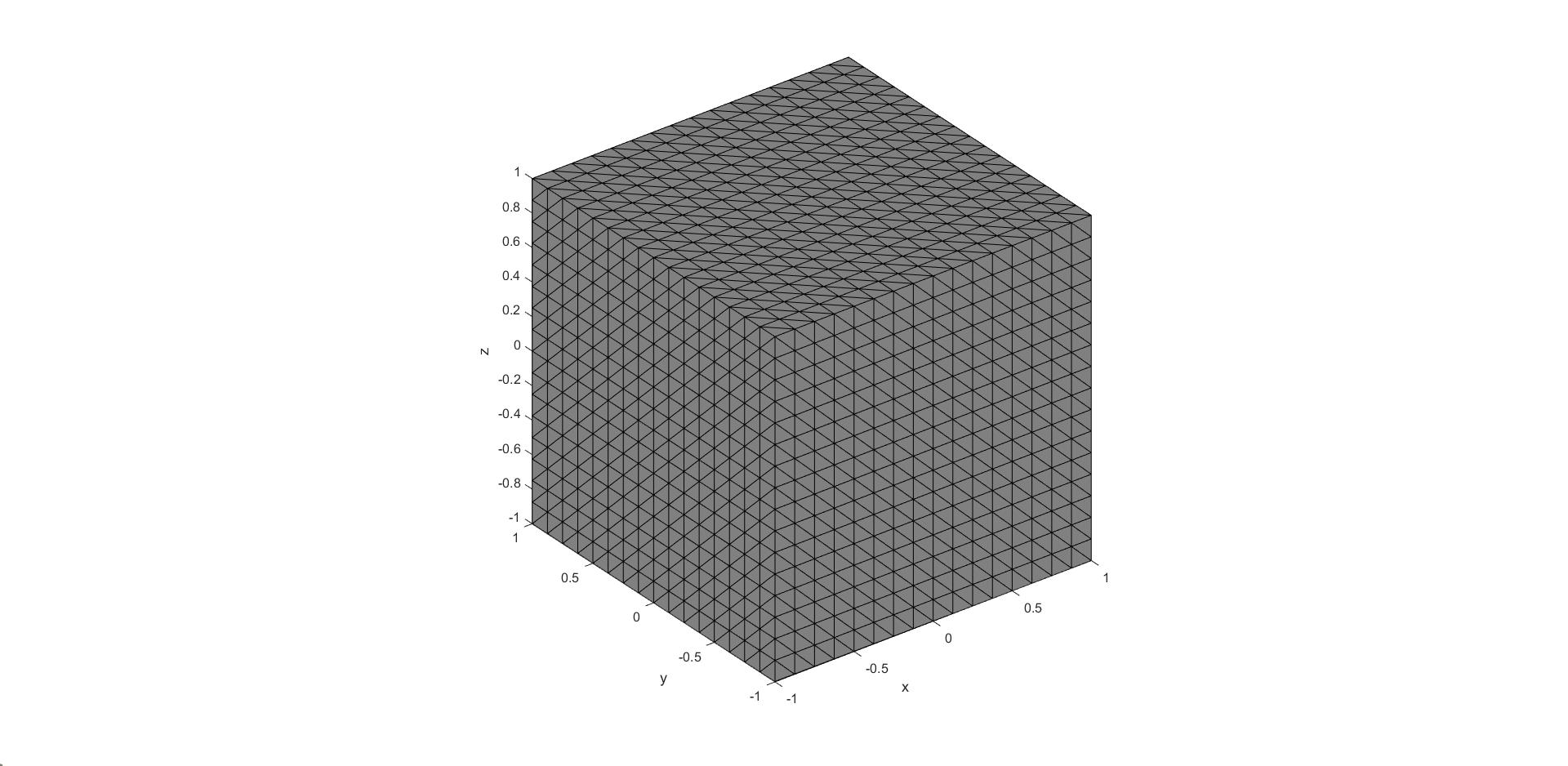}
  \caption{2nd and 4th spatial refinement level of the unit cube 
  \secref{subsec:Acoustic}.}
  \label{fig:CubeMesh}
\end{figure}

Similarly to the previous experiment for the stochastic Dirichlet problem 
we consider the convergence of the relative error with respect to the number 
of orthonormal polynomials, i.e the SG polynomial degree, for a fixed 
space-time mesh or with increasing refinement levels (see \tabref{table:meshesAcoustic}) for a fixed 
SG polynomial degree.

For the following numerical example we only change the stochastic parts of the impedance function $\alpha$. 
Our first numerical examples considers a stochastic coefficient $\alpha \sim \mathcal{U}(0.1,1.9)$. 

In \figref{fig:UniformAcousticMeanAdVarianceT1} we present the mean and variance of $p$ at time $T=1$ for the highest refinement level of the cube, c.f. \tabref{table:meshesAcoustic} for SG polynomial degree 4. 

{At the edges of the cube, where the mean has the lowest value, the variance seems to have the highest value. This 
indicates, that the stochastic coefficients influences the strength of the edge singularities.}

%Mean and variance of the normal derivative of the solution at time $T=1$ are depicted in 
%\figref{fig:UniformAcousticMeanAdVarianceT1}. The plot suggests that 
%the stochastic coefficient $\alpha$ influences the strength of the edge singularities.

Further \figref{fig:UniformAcousticError}a) shows the convergence of $(p,\varphi)$ each for the relative $L^2((0,T)\times\Gamma;L^{2}(\Xi))$-error with increasing 
SG polynomial degree for a fixed space-time mesh. 

{With the increase of each SG polynomial degree, we almost see an error reduction of $10^{-2}$ for $\varphi$ until SG polynomial degree 4. Afterwards a clear reduction ins't visible anymore, whereas for $p$ the convergence rate reduces with the increase of each SG polynomial degree until $6$. Since ansatz and test functions aren't conform, this leads to a more unknown behaviour.}%\fmnote{TODO}

\figref{fig:UniformAcousticError}b) depicts the convergence when increasing the 
space-time refinement level. Here we consider three different fixed SG polynomial degrees.
Computing the rate of convergence in terms of DOFs reveals a rate of $2/3$  for both components, 
the solution on $\Gamma$, $\varphi$ and it's normal derivative on $\Gamma$, $p$. This corresponds to a rate of $2$ in terms of the mesh width $h$.
\begin{figure}
  \includegraphics[trim=0mm 0mm 0mm 6mm, clip,width =0.5\linewidth]{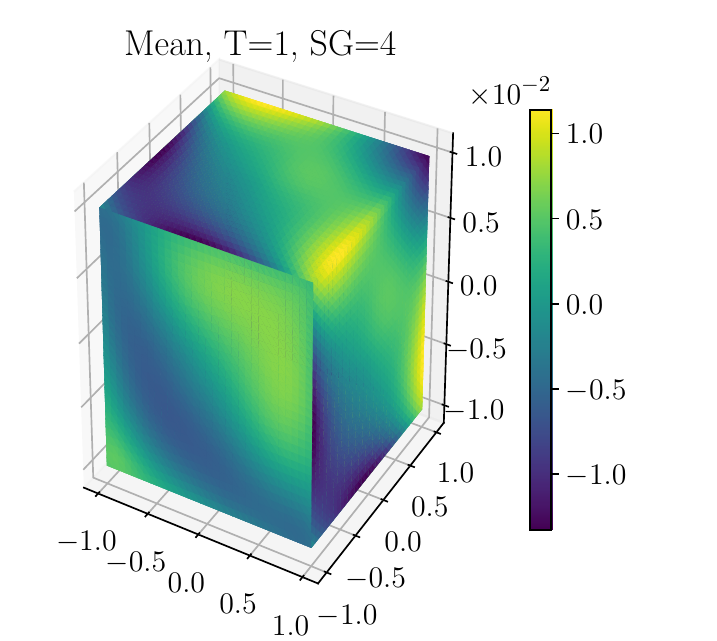}
 \includegraphics[trim=0mm 0mm 0mm 6mm, clip,width =0.5\linewidth]{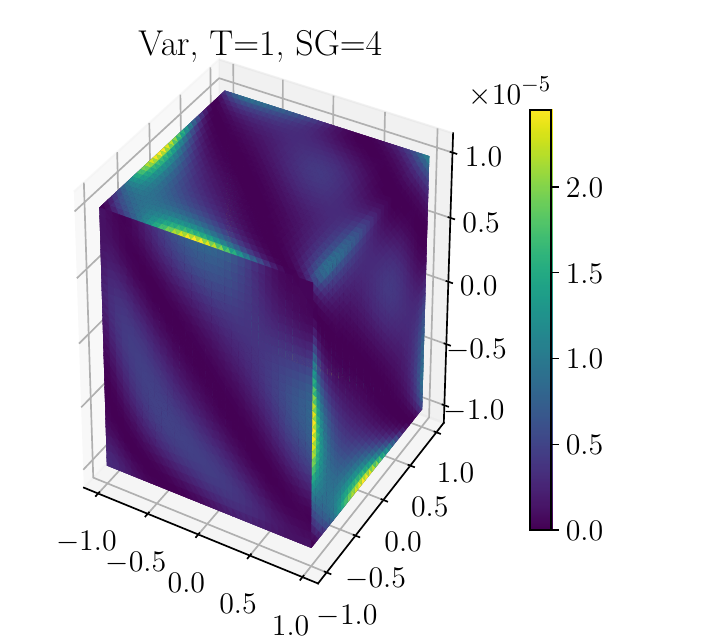}
 \caption{Plot of a) Mean and b) variance of the first component of the solution at time $T=1$, refinement level 5 and SG polynomial degree 4 and $\alpha \sim \mathcal{U}(0.1,1.9)$ on $\Gamma$.
 %Numerical experiment from \secref{subsec:Acoustic}.
}
\label{fig:UniformAcousticMeanAdVarianceT1}
\end{figure}

\begin{figure} 
  \centering
  \begin{tikzpicture}[scale=0.7]
  \begin{axis}[
  xmin = -0.5, xmax = 8,
  ymin = 1e-11, ymax = 1.0,
  ymode = log,
  ylabel = {relative $L^2((0,T)\times\Gamma,L^2(\Xi))$-error},
  xlabel = {SG degree},
  xmode = linear,
  grid = major,
  cycle list name = color,
  legend style={font=\scriptsize},
  legend style={at={(1.,1.)}}
  ]
  \addplot
      table[row sep=crcr]{%
      0		0.193755791203514 \\
      1			0.001284324221099\\
      2			3.4264184862695E-05\\
      3			1.516598523281E-06\\
      4			1.28905686377627E-07\\
      5			3.04259490466013E-08\\
      6			2.19136941930942E-08\\
      7			1.63035917103951E-10 \\     
    };
  \addlegendentry{First component, fifth refinement}
  \addplot
  table[row sep=crcr]{
  0			0.421963984113727\\
  1			0.000834879419778\\
  2			1.30987497787212E-05\\
  3			1.39566016215133E-07\\
  4			2.49445766403383E-09\\
  5			4.7701038504662E-09\\
  6			1.17100671215417E-09\\
  7			1.29354122191381E-09 \\ 
};
\addlegendentry{Second component, fifth refinement}
  \end{axis}
  \end{tikzpicture}
  \begin{tikzpicture}[scale=0.7]
    \begin{axis}[
    xmin = 0, xmax = 2000000,
    ymin = 0.001, ymax = 1,
    ymode = log,
    ylabel = {relative $L^2((0,T)\times\Gamma,L^2(\Xi))$-error},
    xlabel = {space-time DOF},
    xmode = log,
    grid = major,
    cycle list name = color,
    legend style={font=\scriptsize},
    legend style={at={(1.8,1.)}}
    ]
    \addplot
      table[row sep=crcr]{%
    222			0.249238298889772\\
    1740			0.095007099079354\\
    13848			0.028109401753582\\
    110640			0.00742652531148\\
    884832			0.001857034177089\\
    };
    \addlegendentry{First component, SG degree 0}
    \addplot
      table[row sep=crcr]{%
    222			0.255110253223579\\
    1740			0.087786638281467\\
    13848			0.024245790184526\\
    110640			0.006246793005604\\
    884832			0.001562037106462\\
    };
    \addlegendentry{First component, SG degree 2}
    \addplot
      table[row sep=crcr]{%
    222			0.255104186131423\\
    1740			0.087811374969219\\
    13848			0.02425276738158\\
    110640			0.006248627574157\\
    884832			0.001562495848115\\
    };
    \addlegendentry{First component, SG degree 4}
    
    \addplot
      table[row sep=crcr]{%
    222			0.338601875252293\\
    1740			0.207772559758001\\
    13848			0.055077265903734\\
    110640			0.01389982274242\\
    884832			0.003475709676537\\
    };
    \addlegendentry{Second component, SG degree 0}
    \addplot
      table[row sep=crcr]{%
    222			0.338497913951179\\
    1740			0.208962877864616\\
    13848			0.055362963430939\\
    110640			0.013968280541238\\
    884832			0.003492827839709\\
    };
    \addlegendentry{Second component, SG degree 2}
    \addplot
      table[row sep=crcr]{
    222			0.338493245151572\\
    1740			0.208971755956177\\
    13848			0.055364775885809\\
    110640			0.013968644946469\\
    884832			0.003492918960784\\
    };
    \addlegendentry{Second component, SG degree 4}
    \addplot [color=black,dashed, thick]
      table[row sep=crcr]{%
    222	0.172093585455314\\
    884832	0.000684583011673\\
    };
    \addlegendentry{order -2/3}
    \end{axis}
    \end{tikzpicture}
    \caption{Plot of relative $L^2((0,T)\times\Gamma,L^2(\Xi))$-errors for a) a fixed space-time mesh and b) a fixed SG polynomial degree and  $\alpha \sim \mathcal{U}(0.1,1.9)$ on $\Gamma$.} %Numerical experiment from 
    %\secref{subsec:Acoustic}.}
    \label{fig:UniformAcousticError}
  \end{figure}
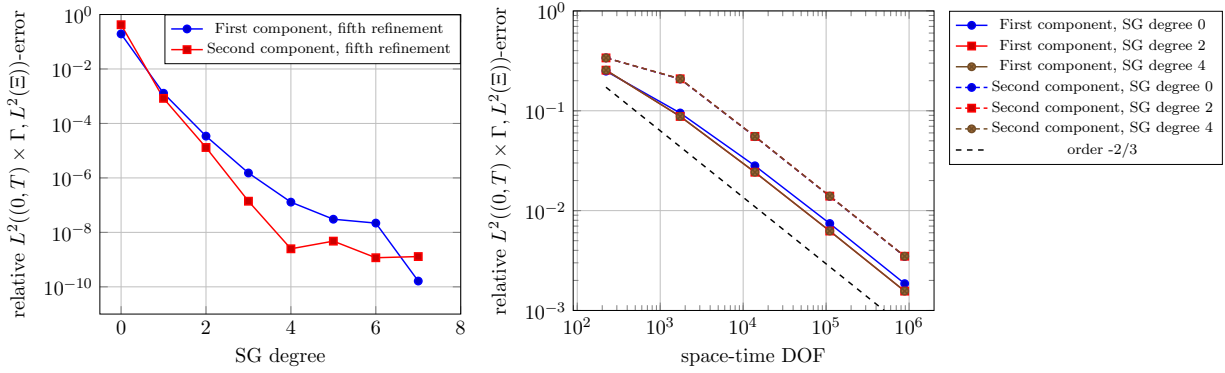

%%%%%%%%%%%%%%%%%%%%%%%%%%%%%%%%%%%%%%%%
%%%%%%%%%%%%%%%SUBSECTION%%%%%%%%%%%%%%%
%%%%%%%%%%%%%%%%%%%%%%%%%%%%%%%%%%%%%%%%
%\subsection{Acoustic problem with space-stochastic coefficient}
%\label{subsec:DiscAcoustic}
%We consider the same numerical setup as in \secref{subsec:Acoustic}
%but now we consider a spatially varying stochastic
%coefficient $\alpha$. 
In our next numerical example, we let  $\alpha \sim \mathcal{U}(1.5,2.5)$ on the top of the cube $\Gamma_1 := [-1,1]^{2}\times \{1\}$ and 
and $\alpha \sim \mathcal{U}(3,4)$ on $\Gamma_2 := \Gamma \setminus \Gamma_1 $.
%The solution is again computed up to $T=3$
In \figref{fig:DiscAcousticErrorMeanAndVariance2} the mean and variance of $p$ are presented at time $T=1$ for the highest refinement level and SG polynomial degree 4. The plot clearly differs from \figref{fig:UniformAcousticMeanAdVarianceT1}, where the variance only has entries on the edges. Similar to \figref{fig:UniformAcousticMeanAdVarianceT1}, we observe the highest values on the edges for the mean as well as for the variance.

In \figref{fig:DiscAcousticError}, we plot an analoguous result to the \figref{fig:DiscAcousticError}. The first plot shows a reduction of the error of $10^{-3}$ until SG polynomial degree 2 for $\varphi$ and $10^{-2}$ until SG polynomial degree 3 for $p$. Then there is a very slow convergence until degree 6. For the second plot, we again observe a convergence rate of $-2/3$ in a fixed space-time DOF for $\varphi$. For $p$ we can't tell a clear rate, but it tends to be $-2/3$, too, except for the 3rd to the 4th refinement, like in the corresponding Figure before.

\begin{figure}
  \includegraphics[width =0.5\linewidth]{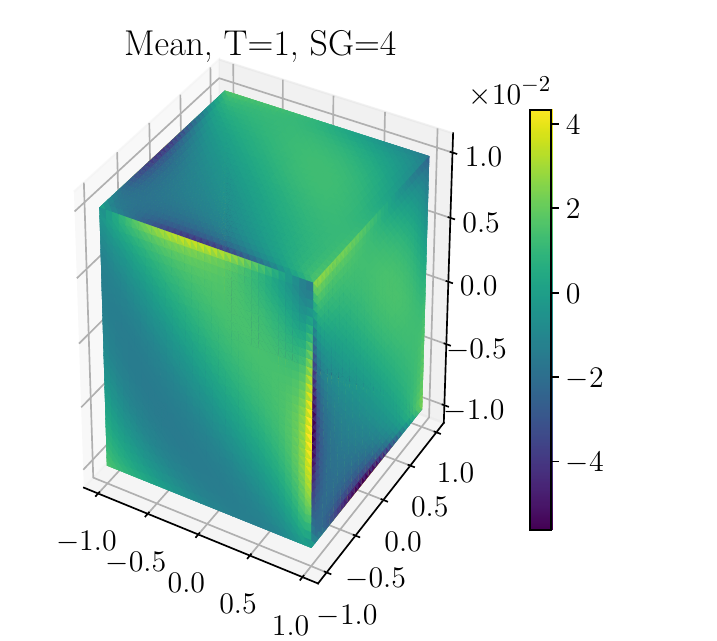}
 \includegraphics[width =0.5\linewidth]{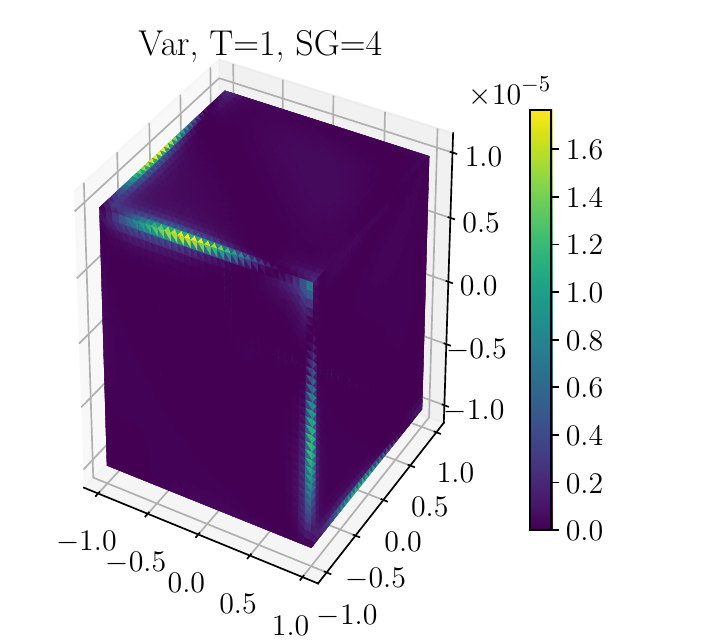}
 \caption{Plot of a) Mean and b) variance of the first component of the solution  at time $T=1$ for refinement level 5 and SG polynomial degree 4 and $\alpha \sim \mathcal{U}(1.5,2.5)$ on $\Gamma_1$ and $\alpha \sim \mathcal{U}(3,4)$ on $\Gamma_2$. }
 %Numerical experiment from \secref{subsec:DiscAcoustic}.}
\label{fig:DiscAcousticErrorMeanAndVariance2}
\end{figure}

\begin{figure}
\centering
  \begin{tikzpicture}[scale=0.7]
    \begin{axis}[
    xmin = -0.5, xmax = 8,
    ymin = 1e-12, ymax = 1.0,
    ymode = log,
    ylabel = {relative $L^2((0,T)\times\Gamma,L^2(\Xi))$-error},
    xlabel = {SG degree},
    xmode = linear,
    grid = major,
    cycle list name = color,
    legend style={font=\scriptsize},
    legend style={at={(1.,1.)}},
    ]
    \addplot
        table[row sep=crcr]{%
        0			0.006957470158886\\
        1			3.94490881653416E-05\\
        2			2.15132001636636E-07\\
        3			1.59284504100222E-09\\
        4			1.15587013948433E-09\\
        5			5.46926752084718E-10\\
        6			4.11356698099475E-10\\
        7			9.34752388292965E-12\\        
      };
    \addlegendentry{First component, fifth refinement}
    \addplot
    table[row sep=crcr]{
      0			0.00944517856858\\
      1			1.52990413016025E-05\\
      2			3.47075531898231E-08\\
      3			1.42373869704038E-08\\
      4			5.96468634600755E-09\\
      5			3.94107978265053E-09\\
      6			1.20540400614035E-09\\
      7			1.10432316379714E-12\\       
  };
  \addlegendentry{Second component, fifth refinement}
  \end{axis}
\end{tikzpicture} 
\begin{tikzpicture}[scale=0.7]
\begin{axis}[
xmin = 0, xmax = 2000000,
ymin = 0.00005, ymax = 1,
ymode = log,
ylabel = {relative $L^2((0,T)\times\Gamma,L^2(\Xi))$-error},
xlabel = {space-time DOF},
xmode = log,
grid = major,
cycle list name = color,
legend style={font=\scriptsize},
legend style={at={(1.8,1.)}}
]
\addplot
  table[row sep=crcr]{%
222			0.35347195710067\\
1740			0.11529950827877\\
13848			0.017807140903367\\
110640			0.00040595547975\\
884832			0.000101510890847\\
};
\addlegendentry{First component, SG degree 0}
\addplot
  table[row sep=crcr]{%
222			0.354602215425068\\
1740			0.116547340240718\\
13848			0.018515132960014\\
110640			0.000634160213476\\
884832			0.000158574453167\\
};
\addlegendentry{First component, SG degree 2}
\addplot
  table[row sep=crcr]{%
222			0.354602202944901\\
1740			0.116547342218061\\
13848			0.018515142802841\\
110640			0.000634176235601\\
884832			0.000158578459564\\
};
\addlegendentry{First component, SG degree 4}
\addplot
  table[row sep=crcr]{%
222			0.321904658298223\\
1740			0.164059971913243\\
13848			0.037617885213133\\
110640			0.008216363382759\\
884832			0.002054536539394\\
};
\addlegendentry{Second component, SG degree 0}
\addplot
  table[row sep=crcr]{%
222			0.320403016365026\\
1740			0.163523100067492\\
13848			0.03732521194525\\
110640			0.008103608637961\\
884832			0.002026341736852\\
};
\addlegendentry{Second component, SG degree 2}
\addplot
  table[row sep=crcr]{%
222			0.320402759799301\\
1740			0.163523025920587\\
13848			0.037325183841967\\
110640			0.008103598603982\\
884832			0.002026339227814\\
};
\addlegendentry{Second component, SG degree 4}
\addplot [color=black,dashed, thick]
  table[row sep=crcr]{%
222	0.172093585455314\\
884832	0.000684583011673\\
};
\addlegendentry{order -2/3}
\end{axis}
\end{tikzpicture}
\caption{Plot of relative $L^2((0,T)\times\Gamma,L^2(\Xi))$-errors for a) a fixed space-time mesh and b) a fixed SG polynomial degree with $\alpha \sim \mathcal{U}(1.5,2.5)$ on $\Gamma_1$ and $\alpha \sim \mathcal{U}(3,4)$ on $\Gamma_2$.
%Numerical experiment from \secref{subsec:DiscAcoustic}.
}
\label{fig:DiscAcousticError}
\end{figure}
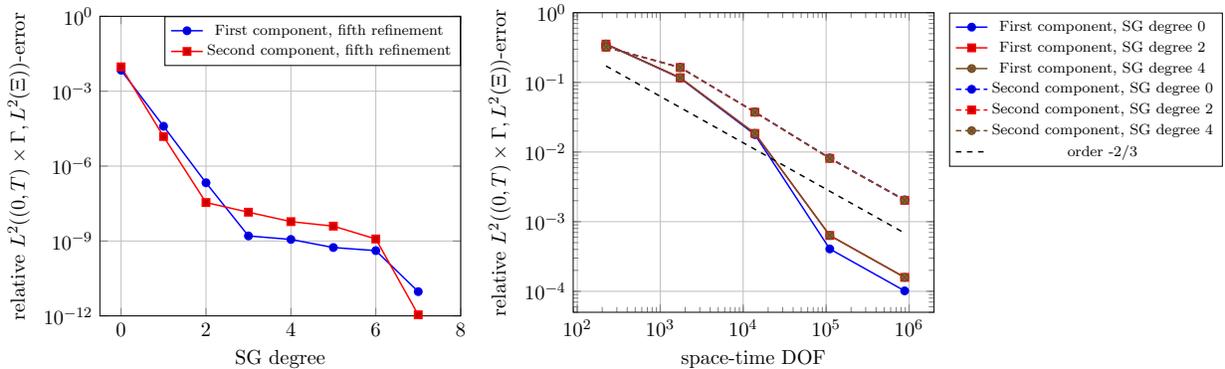

 In our final example we consider the stochastic coefficient of the impedance function 
 $\alpha \sim \mathcal{U}(0.1,0.9)$ on $\Gamma_1 := [-1,1]^{2} \times \{1\}$ and 
  $\alpha \sim \mathcal{U}(3,4)$ on $\Gamma_2 := \Gamma \setminus \Gamma_1 $.
	
In \figref{fig:DiscAcousticErrorMeanAndVariance_reducedConvergence}, we plot again the mean and the variance of $p$ with the same settings as in the corresponding plots before. We observe that indeed edge singularitutes of the mean is independent of the variance. As in \figref{fig:DiscAcousticErrorMeanAndVariance2} the interior of the boundary is zero on the whole boundary except the top and the edges. For the mean the change of $\alpha$ at the top causes that a high value switches into another edge. 

In \figref{fig:DiscAcousticError_reducedConvergence}a) correspondingly to the other Figures \ref{fig:DiscAcousticErrorMeanAndVariance2} and \ref{fig:UniformAcousticMeanAdVarianceT1}, we see an error reduction of $10^{-2}$ until degree $2$ for $p$ and $\varphi$. Afterwards they decay spectrally. 
For a fixed SG polynomial degree in \figref{fig:DiscAcousticError_reducedConvergence}b), we see for $\varphi$ a rate of $1/4$ and for $p$ a rate of $1$.

Altogether, we conclude that not only the convergence rates, also the edge and corner singularities depend highly on the impedance $\alpha$ despite choosing nonconforming ansatz and test functions.

  \begin{figure}
    \includegraphics[width =0.5\linewidth]{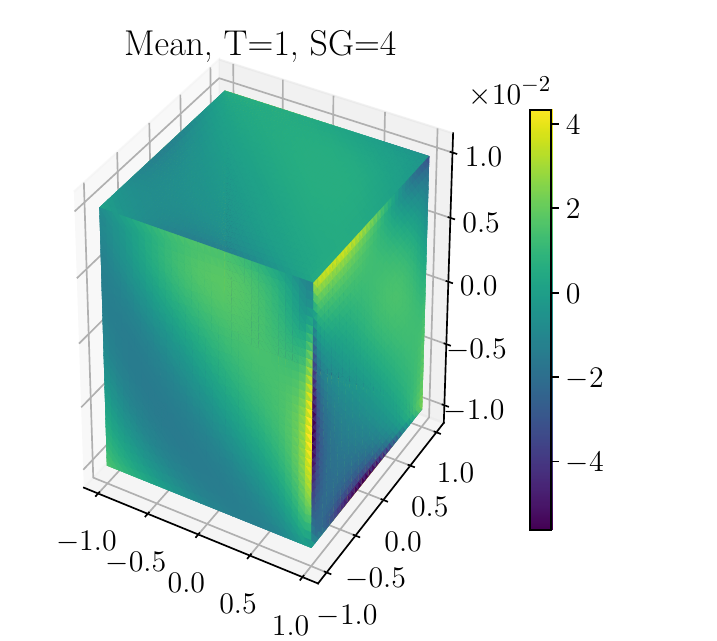}
   \includegraphics[width =0.5\linewidth]{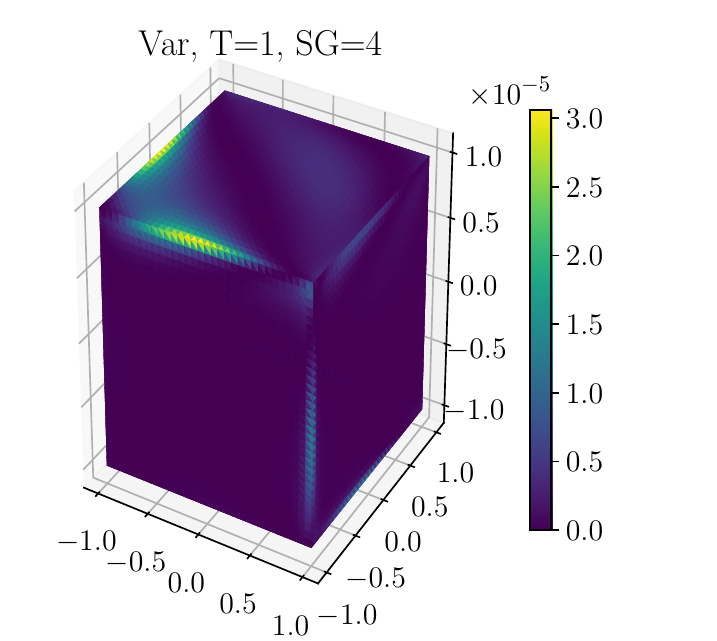}
   \caption{Plot of a) Mean and b) variance of the first component of the solution  at time $T=1$, refinement level 5 and SG polynomial degree 4 and $\alpha \sim \mathcal{U}(0.1,0.9)$ on $\Gamma_1$ and $\alpha \sim \mathcal{U}(3,4)$ on $\Gamma_2$. 
   %Numerical experiment from \secref{subsec:DiscAcoustic_reducedConvergence}.
	}
  \label{fig:DiscAcousticErrorMeanAndVariance_reducedConvergence}
  \end{figure}

%%%%%%%%%%%%%%%%%%%%%%%%%%%%%%%%%%%%%%%%
%%%%%%%%%%%%%%%SUBSECTION%%%%%%%%%%%%%%%
%%%%%%%%%%%%%%%%%%%%%%%%%%%%%%%%%%%%%%%%
%\subsection{Acoustic problem with space-stochastic coefficient, second example}
%\label{subsec:DiscAcoustic_reducedConvergence}
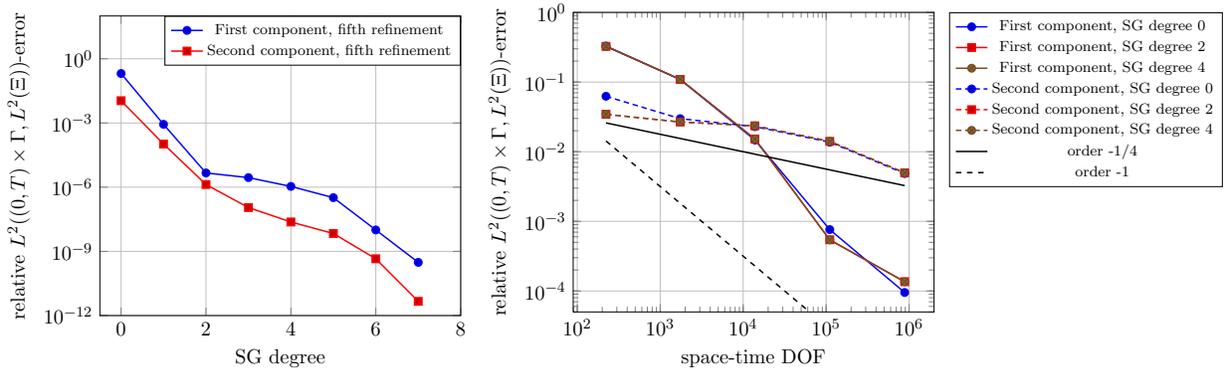
\begin{figure}
  \centering
    \begin{tikzpicture}[scale=0.7]
      \begin{axis}[
      xmin = -0.5, xmax = 8,
      ymin = 1e-12, ymax = 100.0,
      ymode = log,
      ylabel = {relative $L^2((0,T)\times\Gamma,L^2(\Xi))$-error},
      xlabel = {SG degree},
      xmode = linear,
      grid = major,
      cycle list name = color,
      legend style={font=\scriptsize},
      legend style={at={(1.,1.)}},
      ]
      \addplot
          table[row sep=crcr]{%
          0			0.205134846544482\\
          1			0.000864269288874\\
          2			4.61660003333125E-06\\
          3			2.79579882141877E-06\\
          4			1.08367003162943E-06\\
          5			3.23668455566752E-07\\
          6			1.00318911520655E-08\\
          7			3.0563252097638E-10\\                  
        };
      \addlegendentry{First component, fifth refinement}
      \addplot
      table[row sep=crcr]{
        0			0.010988953169125\\
        1			0.000103499170289\\
        2			1.32741146000021E-06\\
        3			1.11553985683177E-07\\
        4			2.36072126726895E-08\\
        5			6.83371518364325E-09\\
        6			4.50957089506952E-10\\
        7			4.62148867029359E-12\\                 
    };
    \addlegendentry{Second component, fifth refinement}
    \end{axis}
  \end{tikzpicture} 
  \begin{tikzpicture}[scale=0.7]
  \begin{axis}[
  xmin = 0, xmax = 2000000,
  ymin = 0.00005, ymax = 1,
  ymode = log,
  ylabel = {relative $L^2((0,T)\times\Gamma,L^2(\Xi))$-error},
  xlabel = {space-time DOF},
  xmode = log,
  grid = major,
  cycle list name = color,
  legend style={font=\scriptsize},
  legend style={at={(1.8,1.0)}}
  ]
  \addplot
    table[row sep=crcr]{%
    222			0.329116613417176\\
    1740			0.109011713806115\\
    13848			0.014703738597195\\
    110640			0.000761160582844\\
    884832			9.51760411986228E-05\\     
  };
  \addlegendentry{First component, SG degree 0}
  \addplot
    table[row sep=crcr]{%
    222			0.324899486245871\\
    1740			0.10903839949066\\
    13848			0.015224439686129\\
    110640			0.000544017943584\\
    884832			0.000136033995956\\    
  };
  \addlegendentry{First component, SG degree 2}
  \addplot
    table[row sep=crcr]{%
    222			0.324895701012458\\
    1740			0.109041819545805\\
    13848			0.015227295980278\\
    110640			0.000541248924942\\
    884832			0.000135341591091\\    
  };
  \addlegendentry{First component, SG degree 4}
  \addplot
    table[row sep=crcr]{%
    222			0.062515893062138\\
    1740			0.029809849235324\\
    13848			0.022897717950623\\
    110640			0.013729889893706\\
    884832			0.00485503905401\\
  };
  \addlegendentry{Second component, SG degree 0}
  \addplot
    table[row sep=crcr]{%
    222			0.034492143132376\\
    1740			0.026706957467035\\
    13848			0.023522679302499\\
    110640			0.014068682016589\\
    884832			0.004974839649683\\    
  };
  \addlegendentry{Second component, SG degree 2}
  \addplot
    table[row sep=crcr]{%
    222			0.034427911537667\\
    1740			0.026719661579596\\
    13848			0.023521820373035\\
    110640			0.014068570610937\\
    884832			0.004974800255427\\    
  };
  \addlegendentry{Second component, SG degree 4}
  \addplot [color=black, thick]
    table[row sep=crcr]{%
    222	0.025906679741218\\
    884832	0.003260504431908\\       
  };
  \addlegendentry{order -1/4}
  \addplot [color=black,dashed, thick]
  table[row sep=crcr]{%
  222	0.014244493964723\\
  884832	3.57387352646422E-06\\    
  };
  \addlegendentry{order -1}
  \end{axis}
  \end{tikzpicture}
  \caption{Plot of relative $L^2((0,T)\times\Gamma,L^2(\Xi))$-errors for a) a fixed space-time mesh and b) a fixed SG polynomial degree for $\alpha \sim \mathcal{U}(0.1,0.9)$ on $\Gamma_1$ and $\alpha \sim \mathcal{U}(3,4)$ on $\Gamma_2$.
  %Numerical experiment from \secref{subsec:DiscAcoustic_reducedConvergence}.
	}
  \label{fig:DiscAcousticError_reducedConvergence}
  \end{figure}

\subsection{{Application to traffic noise}}
\label{app:traficnoise}
{We finally indicate the feasibility and relevance of the developed techniques for problems in traffic noise. We consider the sound impact of a grown slick 205/55R16 passenger car tyre, of diameter around $0.6$m at $2$ bar pressure and subject to $3415$N axle load at $50$km/h on a flat street with an ISO~10844 surface. The domain is here given by the complement of the tyre in the upper half space, with the origin in the centre of the tyre, and its boundary $\Gamma$ is shown in Figure \ref{fig:StreetTyreMesh}. The sound pressure evolves according to a wave equation with wave speed $c=343$m/s.}

{Following \cite{comput}, the sound emission corresponds to Neumann boundary condition \eqref{eq:stochNeumann} with right hand side $f = -\rho \frac{\partial^2 u_n}{\partial t^2}$, which has been obtained from simulations of the tyre displacement $u_n$ normal to the surface. For simplicity, we here consider $f$ to be deterministic. On the street surface acoustic boundary conditions \eqref{eq:stochAcoustic} with right hand side $f=0$ are imposed.
The coefficient $\alpha$ depends strongly on the characteristics of the street surface \cite{de2000trias}, and it is therefore of interest to study the sound emission for a realistic range $\alpha \sim \mathcal{U}(1,10)$ of its values. We extend $\alpha$ by $0$ to the surface of the tyre. }

{As in \cite{comput,ha1995}, we use a single layer ansatz for the sound pressure, leading to the second-kind boundary integral equation
\begin{equation} \label{2ndkind}
(-\textstyle{\frac{1}{2}} I+\mathcal{K}')\phi-\alpha \mathcal{V}\phi=f.
\end{equation}   
The sound pressure is recovered from $\phi$ in a postprocessing step. Note that this formulation \eqref{2ndkind} is commonly used by engineers because, unlike \eqref{eq:acoustic_recall}, it only involves the two operators $\mathcal{V}$ and $\mathcal{K}'$. However, the mathematical analysis even of deterministic second-kind boundary integral equations remains widely open for time-dependent problems.}

{The truncated boundary $\Gamma$ is triangulated by the mesh in Figure \ref{fig:StreetTyreMesh} with 14092 elements. We set $\Delta t=0.01/343\text{s} \simeq 2.915 \times 10^{-5}$s and $T=1/343\text{s} \simeq 2.915 \times 10^{-3}$s. We discretize  the weak formulation of \eqref{2ndkind} on this mesh by  piecewise constant ansatz and test functions in space and time, and stochastic polynomial degree 1. Details are provided in Appendix \ref{appendix:disctraffic}.  }

\begin{figure}
%  \hspace*{1.5cm}
\includegraphics[trim=75mm 35mm 75mm 95mm, clip,width=0.98 \linewidth]{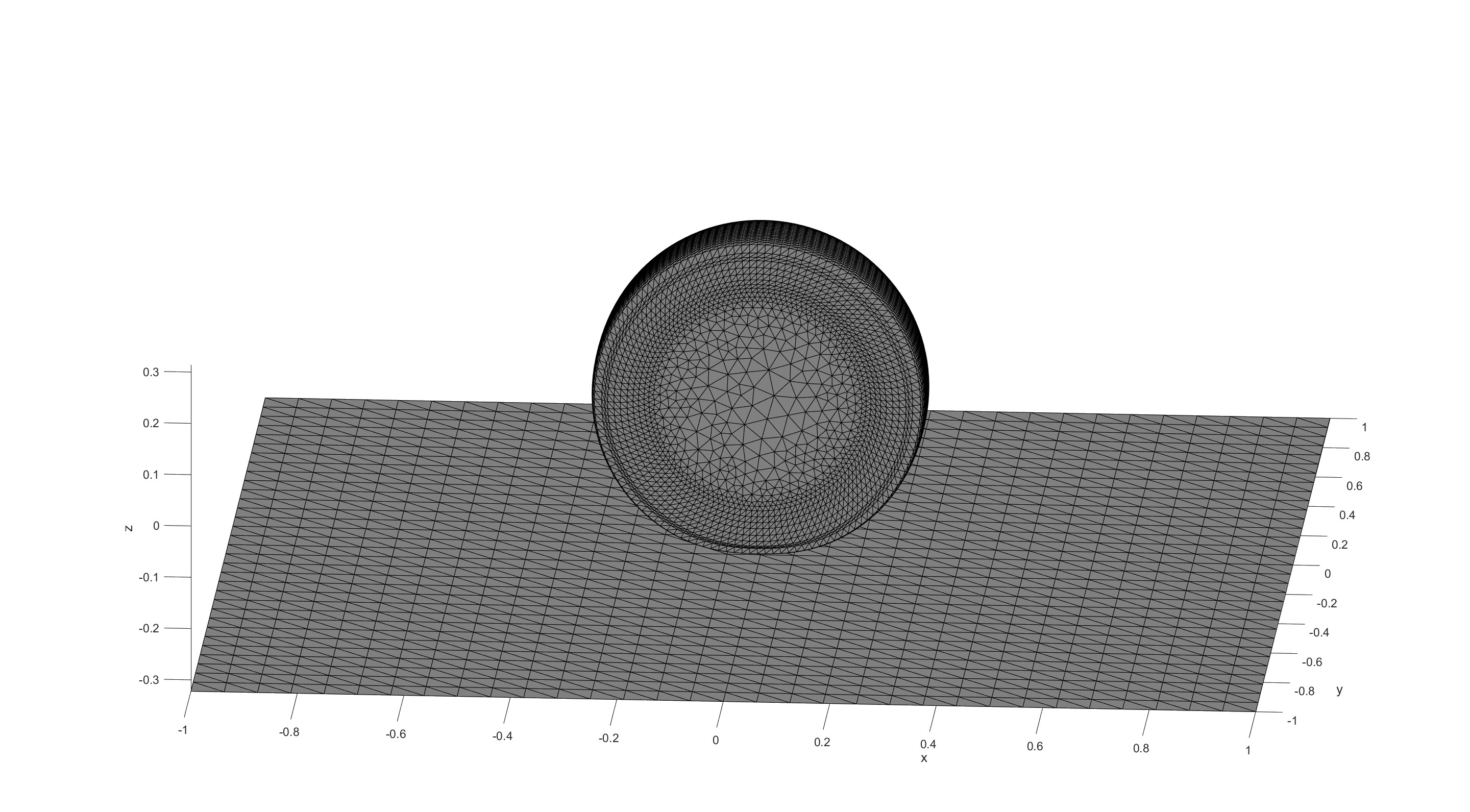}
  \caption{Geometry in  
  \secref{app:traficnoise}.}
  \label{fig:StreetTyreMesh}
\end{figure}

  \begin{figure}
    \includegraphics[trim=65mm 32mm 55mm 95mm, clip,width =0.98\linewidth]{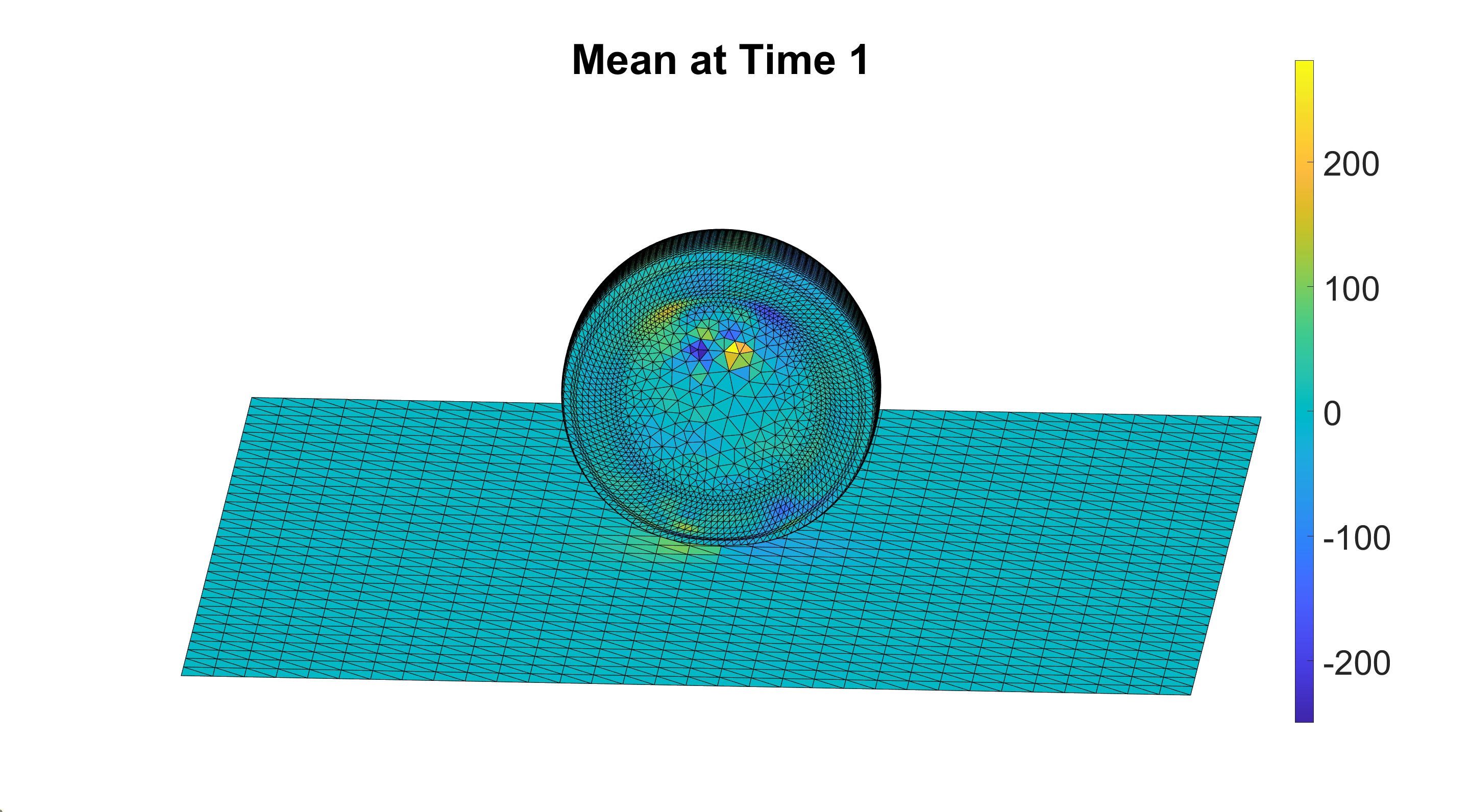}
   \includegraphics[trim=65mm 32mm 55mm 95mm, clip,width =0.98\linewidth]{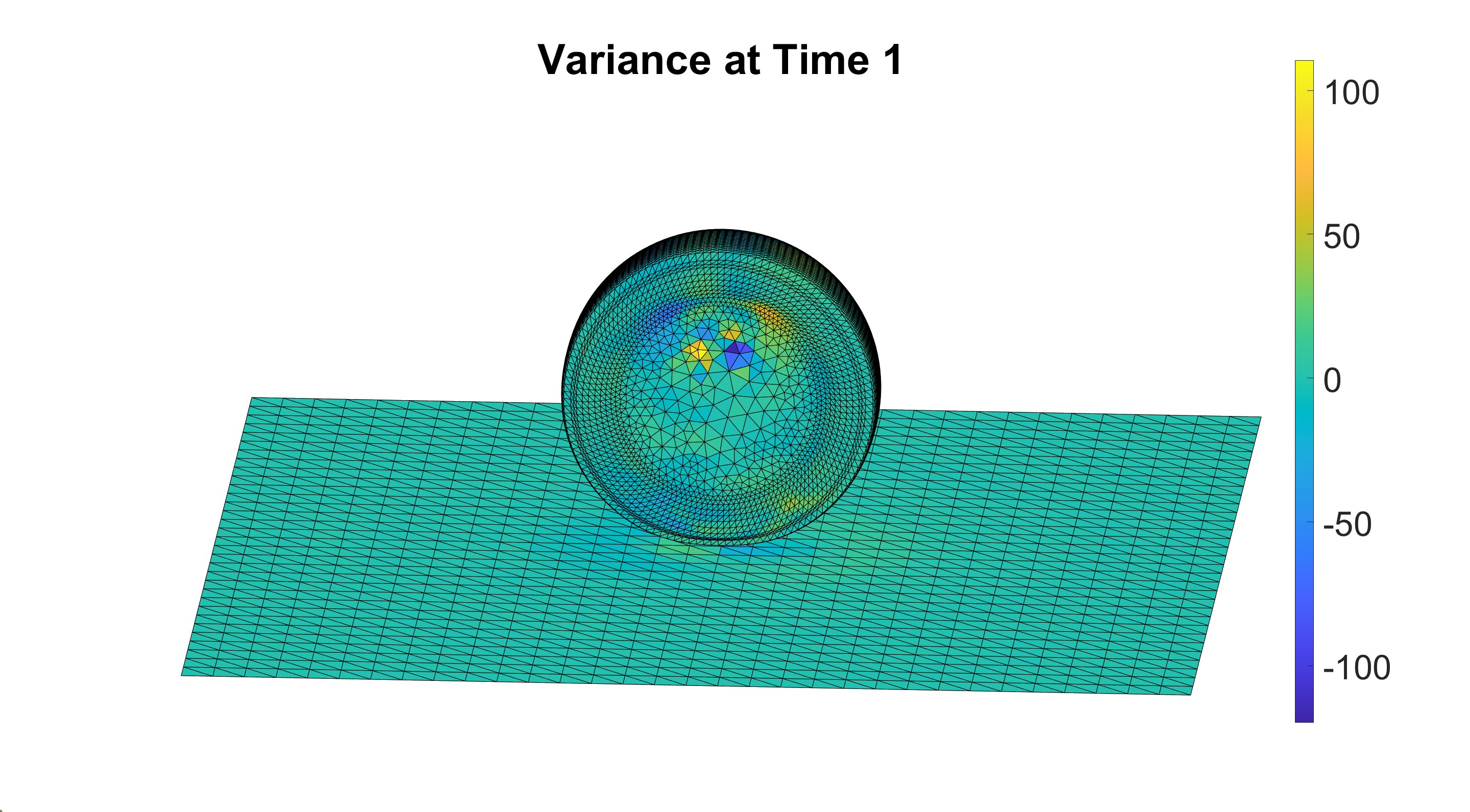}
  \caption{Plot of a) the mean and b) the variance of the density at time $T=2.915 \times 10^{-3}$s. }
%   %Numerical experiment from \secref{subsec:DiscAcoustic_reducedConvergence}.
  \label{fig:AcousticMeanAndVariance_Street_Tyre}
 \end{figure}

%  \begin{figure}
%    \includegraphics[trim=65mm 32mm 55mm 95mm, clip,width =0.5\linewidth]{plots/traffic/Mean_Dichte_at_1_new2.jpg}
%   \includegraphics[trim=65mm 32mm 55mm 95mm, clip,width =0.5\linewidth]{plots/traffic/Variance_Dichte_at_1_new2.jpg}
%   \caption{Plot of a) Mean and b) variance of the first component of the solution  at time $T=1$, refinement level 5 and SG polynomial degree 4 and $\alpha \sim \mathcal{U}(0.1,0.9)$ on $\Gamma_1$ and $\alpha \sim \mathcal{U}(3,4)$ on $\Gamma_2$. 
%   %Numerical experiment from \secref{subsec:DiscAcoustic_reducedConvergence}.
%	}
%  \label{fig:DiscAcousticErrorMeanAndVariance_reducedConvergence}
%  \end{figure}

{In Figure \ref{fig:AcousticMeanAndVariance_Street_Tyre}, we plot the mean and the variance of the density $\phi$. As the source term $f$ is nonzero only on the tyre, the density vanishes on the street outside a neighborhood of size $cT$ of the tyre.}

{After postprocessing the density $\phi$ using the single layer potential, Figure \ref{fig:Eval_0_125_0_-0_31} depicts the sound pressure in a point $(0.125\text{m},0\text{m},-0.31\text{m})$ near the street surface close to the tyre. }%Using a Fast Fourier Transform, we  may also obtain  results for the sound impact in the frequency domain, see \ref{fig:fft}.}

{
The results in the time domain also provide an efficient way to obtain information for a wide band of frequencies. The A-weighted sound pressure level gives an approximation
to the human perception of noise \cite{hoever2012influence}. Figure \ref{fig:fftA}
shows the A-weighted mean sound pressure level in dB of the acoustic
wave emitted from the tyre, averaged
over $17$ points at a distance of $0.4$m from the center, as well as one standard deviation. It is obtained by a
discrete fast Fourier transform in time from the sound pressure. 
}

{Simulations as above of the sound impact of a passenger car tyre provide the computional basis for investigations in traffic noise \cite{comput}. For realistic problems, both sources and the properties of the street surface are uncertain, and the proposed methods allow to quantify  the mean sound pressure level and its variance as key ingredients for further analyses.   }

  \begin{figure}
    \centering\includegraphics[width =0.7\linewidth]{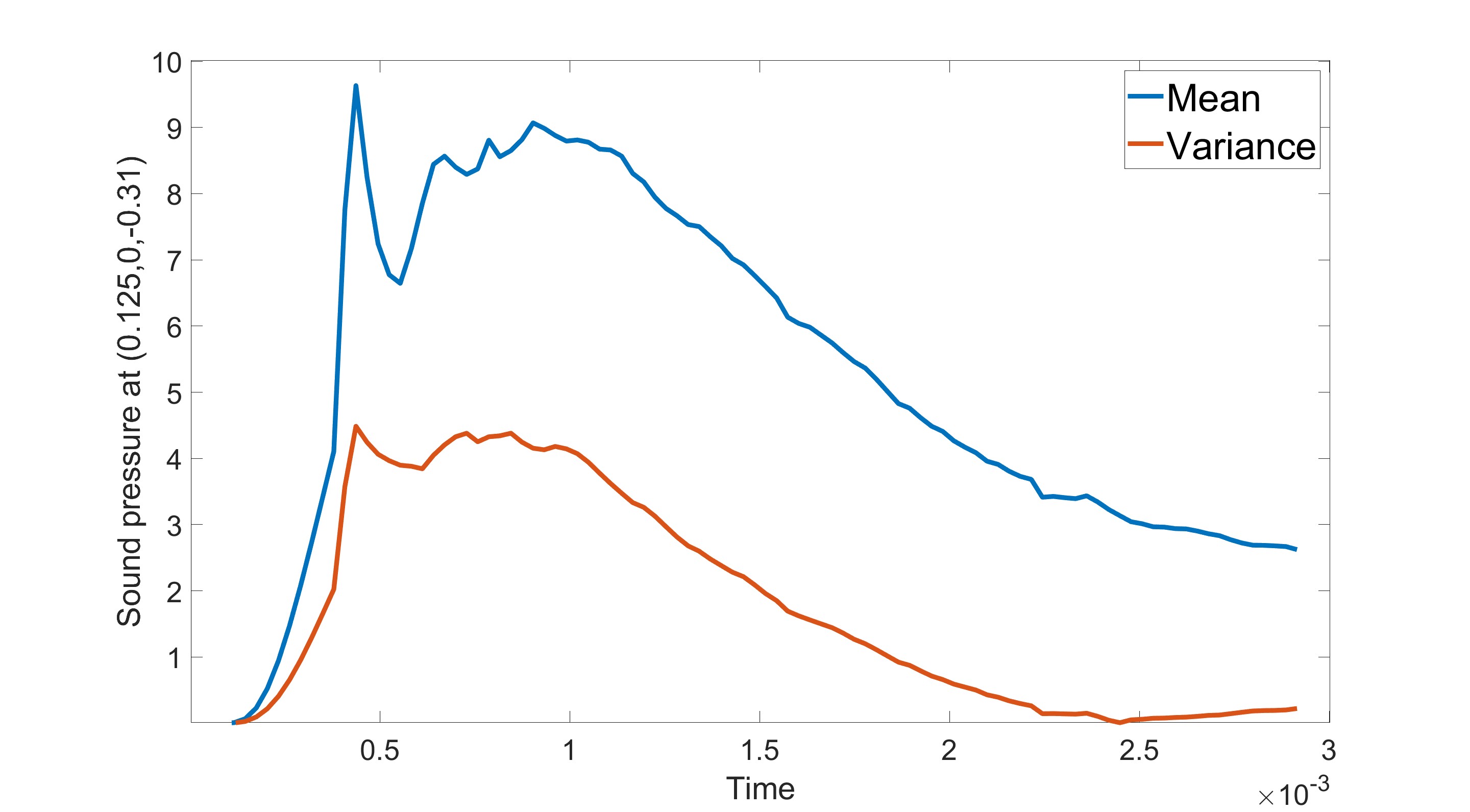}
   \caption{Mean and variance of the sound pressure in the point $(0.125\text{m},0\text{m},-0.31\text{m})$. 
   %Numerical experiment from \secref{subsec:DiscAcoustic_reducedConvergence}.
	}
  \label{fig:Eval_0_125_0_-0_31}
%  \end{figure}
%
%  \begin{figure}
%    \centering\includegraphics[width =0.7\linewidth]{}
% %  \includegraphics[trim=65mm 32mm 55mm 95mm, clip,width =0.5\linewidth]{plots/traffic/Variance_Dichte_at_1_new2.jpg}
%   \caption{Mean and variance of the sound pressure in $(0.125\text{m},0\text{m},-0.31\text{m})$ in the frequency domain. 
%   %Numerical experiment from \secref{subsec:DiscAcoustic_reducedConvergence}.
%	}
%  \label{fig:fft}
    \centering\includegraphics[width =0.7\linewidth]{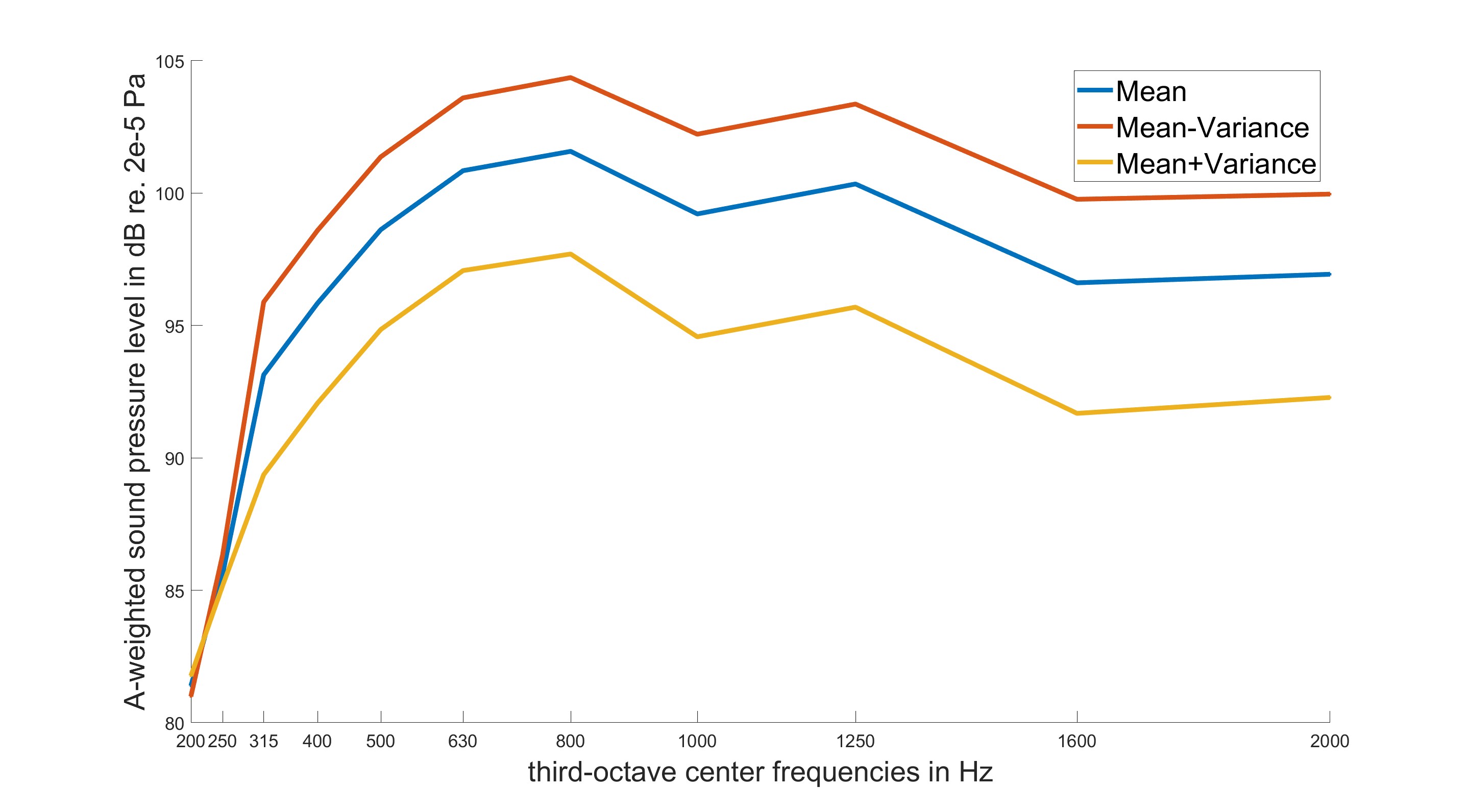}
   \caption{A-weighted mean sound pressure and confidence intervall as function of frequency, averaged over $17$ points. 
   %Numerical experiment from \secref{subsec:DiscAcoustic_reducedConvergence}.
	}
  \label{fig:fftA}

  \end{figure}

 %We consider the same right-hand side function
 %as in the numerical experiment from 
 %\secref{subsec:Acoustic}. The solution
 %is again computed up to $T=3$

 %We compute the numerical solution up to 
 %time $T=3$. 
 %For this numerical example we measure the error in the $L^2(\Gamma,L^2(\Xi))$-norm,
 %i.e. the spatial $L^2$-error of the mean value of both components of the solution.

\section{Conclusions}

In this article   we have introduced and studied a stochastic space-time Galerkin boundary element method for the acoustic wave equation. Stochastic Dirichlet, Neumann and acoustic boundary conditions are considered, with uncertain sources and acoustic parameters. The approach builds on recent advances for time domain boundary integral equations.

After a polynomial chaos expansion of the random variables, we obtained a high-dimensional space-time boundary integral equation. This integral equation is discretized using a Galerkin method based on tensor products of piecewise polynomial basis functions in space, respectively in time, leading to a  high-dimensional marching-on-in-time time-stepping scheme \cite{terrasse}. After presenting the formulation and its theoretical background, we discussed the detailed discretization in the stochastic and space-time variables. The numerical experiments for the wave equation in $3d$ empirically confirmed the performance and the convergence rates for increasing stochastic, respectively space-time degrees of freedom. {They also confirmed the applicability of the proposed methods for applications to traffic noise.}

\bibliographystyle{siam}
\bibliography{bibliography}

\appendix

\section{Set-up of space-time anisotropic Sobolev spaces}

Space--time anisotropic Sobolev spaces provide a convenient setting to study time dependent boundary integral equations like \eqref{symmhyp} and \eqref{hypersingeq}. We refer to \cite{setup,hd} for a detailed exposition. 

{To define the relevant class of function spaces, if $\partial\Gamma\neq \emptyset$, we first extend $\Gamma$ to a closed, orientable Lipschitz manifold $\widetilde{\Gamma}$.  On $\Gamma$ one defines the usual Sobolev spaces of supported distributions:
$$\widetilde{H}^s(\Gamma) = \{u\in H^s(\widetilde{\Gamma}): \mathrm{supp}\ u \subset {\overline{\Gamma}}\}\ , \quad\ s \in \mathbb{R}\ .$$
Furthermore, ${H}^s(\Gamma)$ is the quotient space $ H^s(\widetilde{\Gamma}) / \widetilde{H}^s({\widetilde{\Gamma}\setminus\overline{\Gamma}})$. \\
{To write down an explicit family of Sobolev norms, introduce a partition of unity $\alpha_i$ subordinate to a covering of $\widetilde{\Gamma}$ by open sets $B_i$. For diffeomorphisms $\varphi_i$ mapping each $B_i$ into the unit cube $\subset \mathbb{R}^n$, a family of Sobolev norms is induced from $\mathbb{R}^n$, with parameter $\omega \in \mathbb{C}\setminus \{0\}$:
\begin{equation*}
 ||u||_{s,\omega,{\widetilde{\Gamma}}}=\left( \sum_{i=1}^p \int_{\mathbb{R}^n} (|\omega|^2+|\randomvar|^2)^s|\mathcal{F}\left\{(\alpha_i u)\circ \varphi_i^{-1}\right\}(\randomvar)|^2 d\randomvar \right)^{\frac{1}{2}}\ .
\end{equation*}
The norms for different $\omega \in \mathbb{C}\setminus \{0\}$ are equivalent and $\mathcal{F}$ denotes the Fourier transform. They induce norms on $H^s(\Gamma)$, $||u||_{s,\omega,\Gamma} = \inf_{v \in \widetilde{H}^s(\widetilde{\Gamma}\setminus\overline{\Gamma})} \ ||u+v||_{s,\omega,\widetilde{\Gamma}}$ and on $\widetilde{H}^s(\Gamma)$, $||u||_{s,\omega,\Gamma, \ast } = ||e_+ u||_{s,\omega,\widetilde{\Gamma}}$. We write $H^s_\omega(\Gamma)$ for $H^s(\Gamma)$, respectively $\widetilde{H}^s_\omega(\Gamma)$ for $\widetilde{H}^s(\Gamma)$, when a norm with a specific $\omega$ is fixed. $e_+$ extends the distribution $u$ by $0$ from $\Gamma$ to $\widetilde{\Gamma}$. As the norm $||u||_{s,\omega,\Gamma, \ast }$ corresponds to extension by zero, while $||u||_{s,\omega,\Gamma}$ allows  extension by an arbitrary $v$, $||u||_{s,\omega,\Gamma, \ast }$ is stronger than $||u||_{s,\omega,\Gamma}$. 
We now define a class of space-time anisotropic Sobolev spaces:
\begin{definition}\label{sobdef}
For {$\sigma>0$ and} $r,s \in\mathbb{R}$ define
\begin{align*}
 H^r_\sigma(\mathbb{R}^+,{H}^s(\Gamma))&=\{ u \in \mathcal{D}^{'}_{+}(H^s(\Gamma)): e^{-\sigma t} u \in \mathcal{S}^{'}_{+}(H^s(\Gamma))  \textrm{ and }   ||u||_{r,s,\sigma,\Gamma} < \infty \}\ , \\
 H^r_\sigma(\mathbb{R}^+,\widetilde{H}^s({\Gamma}))&=\{ u \in \mathcal{D}^{'}_{+}(\widetilde{H}^s({\Gamma})): e^{-\sigma t} u \in \mathcal{S}^{'}_{+}(\widetilde{H}^s({\Gamma}))  \textrm{ and }   ||u||_{r,s,\sigma,\Gamma, \ast} < \infty \}\ .
\end{align*}
$\mathcal{D}^{'}_{+}(E)$ respectively~$\mathcal{S}^{'}_{+}(E)$ denote the spaces of distributions, 
respectively tempered distributions, on $\mathbb{R}$ with support in $[0,\infty)$, 
taking values in a Hilbert space $E$. Here we consider $E= {H}^s({\Gamma})$, 
respectively  $E=\widetilde{H}^s({\Gamma})$. The relevant norms are given by
\begin{align*}
\|u\|_{r,s,\sigma}:=\|u\|_{r,s,\sigma,\Gamma}&=\left(\int_{-\infty+i\sigma}^{+\infty+i\sigma}|\omega|^{2r}\ \|\hat{u}(\omega)\|^2_{s,\omega,\Gamma}\ d\omega \right)^{\frac{1}{2}}\ ,\\
\|u\|_{r,s,\sigma,\ast} := \|u\|_{r,s,\sigma,\Gamma,\ast}&=\left(\int_{-\infty+i\sigma}^{+\infty+i\sigma}|\omega|^{2r}\ \|\hat{u}(\omega)\|^2_{s,\omega,\Gamma,\ast}\ d\omega \right)^{\frac{1}{2}}\,.
\end{align*}
\end{definition}
They are Hilbert spaces, and we note that the basic case $r=s=0$ is the weighted $L^2$-space 
with scalar product $\int_0^\infty e^{-2\sigma t} \int_\Gamma u \overline{v} ds_x\ dt$. 
Because $\Gamma$ is Lipschitz, like in the case of standard Sobolev spaces} 
these spaces are independent of the choice of $\alpha_i$ and $\varphi_i$ when $|s|\leq 1$.}\\

  \section{Algorithmic aspects of the stochastic problem}

\subsection{Discretization of the Dirichlet-Problem}\label{appendix:discstochdir}

Let $\randomvar=(\randomvar_{1},\ldots,\randomvar_{n})$, $x=(x_{1},x_{2},x_{3})$ with $N_{o}$ the amount of timesteps, $N_{s}$ the amount of elements and $\SGOrder$ the polynomial degree of Legendre polynomials, which are uniformly distributed at $[-1,1]$. Then we set the ansatz functions

$$
\dot{\phi}(t,x,\randomvar)=\sum_{i=0}^{(\SGOrder+1)^n-1} \dot{\phi}_{i}(t,x) \Psi(\randomvar) 
$$
with $\Psi_{i}(\randomvar)=P_{i_{1}}(\randomvar_{1}) \ldots P_{i_{n}}(\randomvar_{n})$, $0\leq i_{1},\ldots,i_{n}\leq \SGOrder$
and $\dot{\phi}_{i}(t,x)=\sum_{m=1}^{N_{o}} \sum_{s=1}^{N_{s}} \dot{\gamma}_{\Delta t}^{m}(t) \eta_{h}^{s}(x) \phi_{i}^{m,s}$,
 whereas $\gamma_{\Delta t}^{m}(t)$ resp. $\eta_{h}^{s}(x)$ are piecewise constant functions in time resp. in space $\Gamma$.

For the test functions, we choose $\psi(t,x,\randomvar)=\psi(t,x) \Psi_{j}(\randomvar)=\gamma_{\Delta t}^{l}(t) \eta_{h}^{r}(x) \Psi_{j}(\randomvar)$, $l=1,\ldots,N_{o}$, $r=1,\ldots,N_{o}$, $j=0,\ldots,(\SGOrder+1)^{n}-1$ with $0 \leq j_{1},\ldots,j_{n}\leq \SGOrder$.

Further we set $\sigma=0$, then we get the system:

\begin{align*}
  \int_{-1}^{1} \ldots \int_{-1}^{1}   
	\int_{0}^{\infty} \int_{\Gamma} \mathcal{V}(\sum_{i=0}^{(\SGOrder+1)^{n}-1} \sum_{m=1}^{N_{o}}\sum_{s=1}^{N_{s}} \dot{\gamma}_{\Delta t}^{m}(t) \eta_{h}^{s}(x) \Psi_{i}(\randomvar)\phi_{i}^{m,s}) \gamma^{l}_{\Delta t}(t) \eta_{h}^{r}(x) \Psi_{j}(\randomvar) ds_x dt d\randomvar_{n} \ldots d\randomvar_{1} (\tfrac{1}{2})^{n}
\end{align*}
Using the linearity of $\mathcal{V}$, we can separate the integral into the stochastic and space-time part.
Furthermore, since $\int_{-1}^{1} P_{i_{\tilde{k}}}(\randomvar_{\tilde{k}})P_{j_{\tilde{k}}}(\randomvar_{\tilde{k}}) d\randomvar_{\tilde{k}} = \begin{cases} \tfrac{2}{2 i_{\tilde{k}}+1} & i_{\tilde{k}}=j_{\tilde{k}} \\ 0 & \text{else} \end{cases}$ for $\tilde{k}=1,\ldots,k$,
considering the stochastic integrals we only need to look at the case $i=j$, where we nummerate $i=i_{1} \SGOrder^0 + i_{2} \SGOrder^{1} + \ldots i_{k} \SGOrder^{k} $:
\begin{align*}
 \int_{-1}^{1} P_{i_{1}}(\randomvar_{1}) P_{j_{1}}(\randomvar_{1}) d\randomvar_{1} \ldots \int_{-1}^{1} P_{i_{n}}(\randomvar_{n}) P_{j_{n}}(\randomvar_{n}) d\randomvar_{n} \tfrac{1}{2^{n}}
 = \tfrac{1}{(2 i_{n}+1) \dots (2 i_{1}+1)}
\end{align*}

Now let us consider the remaining space time integral.
%We may write the discretization of the space-time integral as (see Diss. Ceyhun, Elke oder ein passender Paper)
\begin{align*}
  &\int_{0}^{\infty} \int_{\Gamma} \int_{\Gamma} \frac{(\sum_{m=1}^{N_{o}}\sum_{s=1}^{N_{s}} \dot{\gamma}_{\Delta t}^{m}(t-|x-y|) \eta_{h}^{s}(y) \phi_{i}^{m,s}) \gamma^{l}_{\Delta t}(t) \eta_{h}^{r}(x)}{4 \pi |x-y|} ds_y ds_x dt \\ &= 
	\sum_{m=1}^{N_{o}}\sum_{s=1}^{N_{s}} \phi_{i}^{m,s} \int_{\Gamma} \int_{\Gamma} \frac{\eta_{h}^{s}(y) \eta_{h}^{r}(x)}{4 \pi |x-y|}  \int_{0}^{\infty} \dot{\gamma}_{\Delta t}^{m}(t-|x-y|) \gamma^{l}_{\Delta t}(t) dt ds_y ds_x \\
	&= \sum_{m=1}^{N_{o}}\sum_{s=1}^{N_{s}} \phi_{i}^{m,s} \int_{\Gamma} \int_{\Gamma} \frac{\eta_{h}^{s}(y) \eta_{h}^{r}(x)}{4 \pi |x-y|} (-\chi_{E_{l-m-1}}(x,y)+\chi_{E_{l-m}}(x,y)) ds_y ds_x , 
\end{align*}
where $\chi_{E_{l-m}}$ is the indicator function with the lightcones $E_{l-m}=\lbrace (x,y) \in \Gamma\times\Gamma : t_{l-m}\leq|x-y|\leq t_{l-m+1}\rbrace$. 	
%	\sum_{m=1}^{N_t} V^{n-m} \varphi_{i}^{m} =: V \varphi_{i}
%\end{align*}
Defining the matrix $V^{l-m}$ with entries
$$
V^{l-m}_{(r,s)} = - \iint_{E_{l-m-1}} \frac{\eta_{h}^{s}(y) \eta_{h}^{r}(x)}{4 \pi |x-y|} ds_y ds_x + \iint_{E_{l-m}} \frac{\eta_{h}^{s}(y) \eta_{h}^{r}(x)}{4 \pi |x-y|} ds_y ds_x ,
$$
we get the space time system
\begin{align*} V \phi_{i} :=
\begin{pmatrix} {V^0} & 0&0 & 0&\cdots\\
   {V^1} & {V^0}  & 0& 0&\\
   {V^2} & {V^1} & {V^0}  &0 & \\
   V^3 & {V^2} & {V^1}  & {V^0} &\cdots \\  
   \vdots & & & \vdots& \ddots
   \end{pmatrix}
 \begin{pmatrix}  \phi^0_{i} \\ \phi^1_{i} \\ \phi^2_{i}\\ \phi^{3}_{i} \\
   \vdots 
   \end{pmatrix} =  \begin{pmatrix} F^0_{i} \\ F^1_{i} \\  F^2_{i} \\ F^{3}_{i} \\
   \vdots 
   \end{pmatrix} =: F_{i}
\end{align*}

Altogether we get the stochastic space time system:
\begin{equation}\label{eq:appstochspactimsys}
diag(\tfrac{1}{(2 i_{n}+1) \dots (2 i_{1}+1)} V ) (\phi_{0},\ldots,\phi_{i},\ldots,\phi_{(\SGOrder+1)^n-1})^{T} = F
\end{equation}

Now we are able to solve this system by applying forward substitution (marching on in time scheme) $(\SGOrder +1)^{n}$ times. For every $l$ and $i=0,\ldots,(\SGOrder+1)^{n}-1$ solve 
\begin{equation}\label{eq:appmotdirstoch}
  V^{0} \phi_{i}^{l} = F_{i}^{l} - \sum_{m=1}^{l-1} V^{l-m} \phi_{i}^{m}
\end{equation}

\subsection{Discretization of the stochastic acoustic problem}\label{appendix:discstochacoustic}
In this subsection, we write more details about the computation of the stochastic acoustic problem, in particular \eqref{eq:bilinear_recall}.

The coefficients $\alpha$, $\alpha^{-1}$ depend on random variables. Therefore the acoustic boundary conditions the system of equations leads no longer into a block diagonal system. However, the implementation of the stochastic part is independent of the implementation of the integral operators  $\mathcal{V}$, $\mathcal{K}$, $\mathcal{K}'$, $\mathcal{W}$ and may be done separately. 

For the retarded potential operators, we get similar results as before. 
However we have to consider the identity part with care. 

First we apply integration by parts on \eqref{eq:bilinear_recall} and divide the bilinearform into:

\begin{align}\label{eq:appendixacosutic}
  &\int_{-1}^{1}\ldots\int_{-1}^{1} \big( \int_{0}^{T} \int_{\Gamma} \tfrac{1}{\alpha} p q - 2 \mathcal{V} p \dot{q} + 2 \mathcal{K} \varphi \dot{q} + 2 \mathcal{K}' p \dot{\psi} - 2 \mathcal{W} \varphi \dot{\psi} + \alpha \dot{\varphi} \dot{\psi} ds_x dt \big) \omega(\xi) d\randomvar_{n} \ldots d\randomvar_{1} \nonumber
\\ &= \big( \int_{-1}^{1}\ldots\int_{-1}^{1} \int_{0}^{T} \int_{\Gamma} \tfrac{1}{\alpha} p q ds_x dt \omega(\xi) d\randomvar_{n} \ldots d\randomvar_{1} \big) \nonumber
 \\ &+ \big( \int_{-1}^{1}\ldots\int_{-1}^{1} \int_{0}^{T} \int_{\Gamma} \alpha \dot{\varphi} \dot{\psi} ds_x dt \omega(\xi) d\randomvar_{n} \ldots d\randomvar_{1} \big) \nonumber \\
 &+ \big( \int_{-1}^{1}\ldots\int_{-1}^{1} ( \int_{0}^{T} \int_{\Gamma} - 2 \mathcal{V} p \dot{q} + 2 \mathcal{K} \varphi \dot{q} + 2 \mathcal{K}' p \dot{\psi} - 2 \mathcal{W} \varphi \dot{\psi} ds_x dt ) \omega(\xi)  d\randomvar_{n} \ldots d\randomvar_{1} \big)
\end{align}
We use Legendre polynomials $\Psi_{i}$ as basis functions in the stochastic framework. In space
with $\xi_{h}^{s}$ resp. $\eta_{h}^{s}$ piecewise linear functions in space resp. piecewise constant functions in space and in time with $\beta^m_{\Delta t}(t)$ resp. $\gamma^m_{\Delta t}(t)$ piecewise linear functions (hat functions) in time resp. piecewise constant functions,
we choose ansatz functions as: 
\begin{align*}
  \varphi_{h,\Delta t}(t,x,\omega) &= \sum_{i=0}^{(\SGOrder+1)^{n}-1} \varphi_{i}(t,x) \Psi_{i}(\randomvar) \\
	p_{h,\Delta t}(t,x,\omega) &= \sum_{i=0}^{(\SGOrder+1)^{n}-1} p_{i}(t,x) \Psi_{i}(\randomvar) .
\end{align*}
with $ \varphi_{i} = \sum_{m=1}^{N_o} \sum_{s=1}^{N_{s}} \varphi_{i}^{m,s} \beta_{\Delta t}^{m}(t) \randomvar_{h}^{s}(x)$
and $ p_{i} = \sum_{m=1}^{N_o} \sum_{s=1}^{N_{s}} p_{i}^{m,s} \beta_{\Delta t}^{m}(t) \eta_{h}^{s}(x) $.

Further we choose test functions with $l=1,\ldots,N_{o}$, $r=1,\ldots,N_{s}$ and $j=0,\ldots,(\SGOrder+1)^{n}-1$
\begin{align*}
  \dot{\psi}_{h,\Delta t}(t,x,\omega) &= \gamma_{\Delta t}^{l}(t) \randomvar_{h}^{r}(x) \Psi_{j}(\randomvar) \\
	q_{h,\Delta t}(t,x,\omega) &= \gamma_{\Delta t}^{l}(t) \eta_{h}^{r}(x) \Psi_{j}(\randomvar) .
\end{align*}

We separate \eqref{eq:appendixacosutic} by exploiting the linearity of the integral operators.
\begin{align*}
& \sum_{i=0}^{(\SGOrder+1)^{n}-1} \int_{-1}^{1} \ldots \int_{-1}^{1} \int_{0}^{T} \int_{\Gamma} \tfrac{1}{\alpha(x,\xi)} p_{h,\Delta t}(t,x) \Psi_{i}(\randomvar) \gamma^{l}(t) \eta_{h}^{r}(x) \Psi_{j}(\randomvar) \omega(\randomvar)  ds_x dt d\randomvar_{n} \ldots \randomvar_{1} \\
&+ \sum_{i=0}^{(\SGOrder+1)^{n}-1} \int_{-1}^{1} \ldots \int_{-1}^{1} \int_{0}^{T} \int_{\Gamma} \alpha(x,\xi) \dot{\varphi}_{h,\Delta t}(t,x) \Psi_{i}(\randomvar) \gamma^{l}(t) \randomvar_{h}^{r}(x) \Psi_{j}(\randomvar) \omega(\randomvar) ds_x dt d\randomvar_{n} \ldots \randomvar_{1} \\
&+ \sum_{i=0}^{(\SGOrder+1)^{n}-1} \sum_{m=1}^{N_o} \sum_{s=1}^{N_{s}} p_{i}^{m,s} \big( \int_{-1}^{1} \ldots \int_{-1}^{1} \Psi_{i}(\randomvar) \Psi_{j}(\randomvar) \omega(\randomvar) d\randomvar_{n} d\randomvar_{1} \big) \\&\Big[ (-2) \big(\int_{0}^{T} \int_{\Gamma} \int_{\Gamma}  \frac{\beta_{\Delta t}^{m}(t-|x-y|) \eta_{h}^{s}(x) \dot{\gamma}_{\Delta t}^{l}(t) \eta_{h}^{r}(x)}{4 \pi |x-y|} ds_y ds_x dt \big) \\ &+ 2 \big( \int_{\Gamma} \int_{\Gamma} \frac{n_x (x-y)}{4 \pi |x-y|} ( \frac{\eta_{h}^{s}(y) \xi_{h}^{r}(x)}{|x-y|^{2}}\int_{0}^{T} \beta_{\Delta t}^{m}(t-|x-y|)  \gamma_{\Delta t}^{l}(t) dt \\&+ \frac{\eta_{h}^{s}(y) \xi_{h}^{r}(x)}{|x-y|} \int_{0}^{T} \dot{\beta}_{\Delta t}^{m}(t-|x-y|) \gamma_{\Delta t}^{l}(t) dt)  ds_y ds_x \big) \Big] \\
&+ \sum_{i=0}^{(\SGOrder+1)^{n}-1} \sum_{m=1}^{N_o} \sum_{s=1}^{N_{s}} \varphi_{i}^{m,s} \big( \int_{-1}^{1} \ldots \int_{-1}^{1} \Psi_{i}(\randomvar) \Psi_{j}(\randomvar) d\randomvar_{n} d\randomvar_{1} \big) \\ 
&\Big[2 \Big( \int_{\Gamma} \int_{\Gamma} \frac{n_y (x-y)}{4 \pi |x-y|} \big( \frac{\xi_{h}^{s}(y) \eta_{h}^{r}(x)}{|x-y|^{2}}\int_{0}^{T} \beta_{\Delta t}^{m}(t-|x-y|)  \dot{\gamma}_{\Delta t}^{l}(t) dt\\&
+ \frac{\xi_{h}^{s}(y) \eta_{h}^{r}(x)}{|x-y|} \int_{0}^{T} \dot{\beta}_{\Delta t}^{m}(t-|x-y|) \dot{\gamma}_{\Delta t}^{l}(t) dt \big) ds_y ds_x \Big) \\ 
&+ (-2) \Big( \int_{0}^{T} \int_{\Gamma} \int_{\Gamma} \frac{1}{4 \pi} \frac{-n_x \cdot n_y}{|x-y|} \big(\dot{\beta}_{\Delta t}^{m}(t-|x-y|) \randomvar_{h}^{s}(y) \dot{\gamma}_{\Delta t}^{l}(t) \randomvar_{h}^{r}(x) \\
&+ \frac{(curl_{\Gamma} \xi_{h}^{s})(y) (curl_{\Gamma} \xi_{h}^{r}(x))}{|x-y|} \beta_{\Delta t}^{m}(t-|x-y|) \gamma_{\Delta t}^{l}(t)  \big) ds_y ds_x dt \Big) \Big]
\end{align*}
The stochastic terms of the third and fourth terms can be handled as in the case for $\mathcal{V}$ the section before. %The computation of the space-time integrals are discussed for example in Ceyhun's PhD thesis. 
Since the first term and the second term depend on $\alpha$, which may depend on the stochastic and the space, we have to be careful.
 %and use a Gaussian quadrature.
Consider the first and second term. Using the ansatz and test functions, we get for the time integral:
\begin{align*}
\sum_{m=1}^{N_o} p_{i}^{m,s} \int_{0}^{T} \beta_{\Delta t}^{m}(t) \gamma_{\Delta t}^{l}(t) dt = (p_{i}^{l,s}+p_{i}^{l-1,s}) \tfrac{\Delta t}{2} \\
\sum_{m=1}^{N_o} \varphi_{i}^{m,s} \int_{0}^{T} \dot{\beta}_{\Delta t}^{m}(t) \gamma_{\Delta t}^{l}(t) dt = (\varphi_{i}^{l,s}-\varphi_{i}^{l-1,s})
\end{align*}
Therefore
\begin{align*}
&\sum_{i=0}^{(\SGOrder+1)^{n}-1} \sum_{s=1}^{N_o} (p_{i}^{l,s}+p_{i}^{l-1,s}) \tfrac{\Delta t}{2} \int_{-1}^{1} \ldots \int_{-1}^{1} \int_{\Gamma} \tfrac{1}{\alpha} \eta_{h}^{s}(x) \Psi_{i}(\randomvar) \eta_{h}^{r}(x) \Psi_{j}(\randomvar) \omega(\randomvar) ds_x d\randomvar_{n} \ldots \randomvar_{1} \\
&\sum_{i=0}^{(\SGOrder+1)^{n}-1} \sum_{s=1}^{N_o} (\varphi_{i}^{l,s}-\varphi_{i}^{l-1,s}) \int_{-1}^{1} \ldots \int_{-1}^{1} \int_{\Gamma} \alpha \randomvar_{h}^{s}(x) \Psi_{i}(\randomvar) \randomvar_{h}^{r}(x) \Psi_{j}(\randomvar) \omega(\randomvar) ds_x d\randomvar_{n} \ldots \randomvar_{1}
\end{align*}
Assuming we may divide $\alpha(x,\omega)$ into $\alpha_{1}(x)\alpha_{2}(\omega)$, we seperate the integral further. Define:
\begin{align*}
 S_{i,j}(\tfrac{1}{\alpha_{2}}) &:= \int_{[-1,1]^{n}} \frac{1}{\alpha_{2}(\xi)} \Psi_{i}(\xi) \Psi_{j}(\xi) w(\xi) d\xi \\
I_{const}^{r,s}(\tfrac{1}{\alpha_{1}}) &= \int_{\Gamma}  \tfrac{1}{\alpha_{1}(x)} \eta_{h}^{s}(x) \eta_{h}^{r}(x) ds_x \\
I_{lin}^{r,s}(\tfrac{1}{\alpha_{1}}) &= \int_{\Gamma}  \alpha_{1}(x) \xi_{h}^{s}(x) \xi_{h}^{r}(x) ds_x 
\end{align*}
Then we get with $\otimes$ the Kronecker product the stochastic space matrices:
\begin{equation*}
  S(\tfrac{1}{\alpha_{2}}) \otimes I_{const}(\tfrac{1}{\alpha_{1}}) \quad \text{and}  \quad S(\alpha_{2}) \otimes I_{lin}(\alpha_{1}) .
\end{equation*}
For the computation of the boundary integral operators we refer to the Appendix of \cite{FSI} and \cite{review}. 

Altogether, we get a marching on in time scheme for the stochastic space system. Therefore for every time step $l$, solve the stochastic space system:
{\small{
\begin{align*}
  &\hspace*{-0.4cm}\begin{pmatrix} \frac{\Delta t}{2} S(\frac{1}{\alpha_{2}}) \otimes I_{const}(\frac{1}{\alpha_{1}}) + 2 diag(\frac{1}{(2 i_{n}+1)\ldots (2 i_{1}+1)} V^{0}) & 2 diag(\frac{1}{(2 i_{n}+1)\ldots (2 i_{1}+1)} K^{0}) \\ 2 diag(\frac{1}{(2 i_{n}+1)\ldots (2 i_{1}+1)} {K'}^{0}) & S(\alpha_{2}) \otimes I_{lin}(\alpha_{1}) - 2 diag(\frac{1}{(2 i_{n}+1)\ldots (2 i_{1}+1)} {W}^{0})  \end{pmatrix}
	\begin{pmatrix} p^{l} \\ \varphi^{l} \end{pmatrix} = \\ &\hspace*{-0.4cm}\begin{pmatrix} G - \frac{\Delta t}{2} S(\frac{1}{\alpha_{2}}) \otimes I_{const}(\frac{1}{\alpha_{1}}) p^{l-1} -2 \sum_{m=1}^{l-1} diag(\frac{1}{(2 i_{n}+1)\ldots (2 i_{1}+1)} V^{l-m}) p^{m}- 2 \sum_{m=1}^{l-1} diag(\frac{1}{(2 i_{n}+1)\ldots (2 i_{1}+1)} K^{l-m}) \varphi^{m} \\ 
	F + S(\alpha_{2}) \otimes I_{lin}(\alpha_{1}) \varphi^{l-1}  + 2 \sum_{m=1}^{l-1} diag(\frac{1}{(2 i_{n}+1)\ldots (2 i_{1}+1)} W^{l-m})  \varphi^{m} - 2 \sum_{m=1}^{l-1} diag(\frac{1}{(2 i_{n}+1)\ldots (2 i_{1}+1)} {K'}^{l-m}) p^{m} \end{pmatrix},
\end{align*} 
}}
where $V^{l-1},K^{l-1},{K'}^{l-1}, W^{l-1}$ are the corresponding matrices for the $l$-th time step.

\subsection{Discretization of the problem from traffic noise}\label{appendix:disctraffic}
{In this subsection, we discuss the details on the problem in subsection \ref{app:traficnoise}.
For a single layer potential ansatz, we apply the jump relations to derive from the acoustic boundary condition the integral equation:
\begin{equation*}
(-\textstyle{\frac{1}{2}} I + \mathcal{K}'p)- \alpha \textstyle{\frac{\partial }{\partial t}} \mathcal{V} p = f(x,t).
\end{equation*}
Therefore the variational formulation reads:
Find $p \in  L^{2}_{\omega}(\Xi;H^{1}([0,T],L^{2}(\Gamma)))$, such that for all $q \in L^{2}_{\omega}(\Xi;H^{1}([0,T],L^{2}(\Gamma)))$ 
\begin{align*}
\int_{\Xi} \int_{\Gamma} \int_{0}^{T} ((-\textstyle{\frac{1}{2}} I + \mathcal{K}')p - \alpha(x,t,\xi) \mathcal{V} \dot{p}) \dot{q} \omega(\xi) dt ds_x d\xi = \int_{\Xi} \int_{\Gamma} \int_{0}^{T} f(x,t,\xi) \dot{q} \omega(\xi) dt ds_x d\xi.
\end{align*}
Here again we use Legendre polynomials and piecewise constant ansatz functions in time resp. in space. For the test function, we again use $\dot{q}_{h,\Delta t}=\gamma_{\Delta t}^{m}(t) \eta_{h}^{r}(x) \Psi_{j}(\randomvar) $, where $\gamma_{\Delta t}^{m}(t)$ and $\eta_{h}^{r}(x)$ are piecewise constant functions in time, resp. in space, with $m=1,\ldots,N_o $, $r=1,\ldots,N_s$ and $j=0,\ldots,(J+1)^{n}-1$.
Therefore, we get:
\begin{align*}
\sum_{i=0}^{(J+1)^{n}-1} \int_{-1}^{1} \ldots \int_{-1}^{1} \int_{0}^{T} \int_{\Gamma} [(-\textstyle{\frac{1}{2}} I + \mathcal{K}')p_{h,\Delta t} - \alpha \mathcal{V} \dot{p}_{h,\Delta t} ] \dot{q}_{h,\Delta t} \omega(\randomvar) ds_x dt d\randomvar_{n}\ldots d\randomvar_{1}.
\end{align*}
For the computation of the single layer operator, we refer to the computation above in \ref{appendix:discstochdir} and for the adjoint boundary layer operator, we refer to the Appendix of \cite{FSI} and to \cite{review}. \\
As in the main text of the article, we obtain a marching-on-in-time scheme to solve the following system for the solution in each time step. \\If $\alpha$ only  depends on the stochastic variable, the individual operators factorize into tensor products with a fixed stochastic matrix: For every time step $l$, one then needs to solve the stochastic and spatial system:
\begin{align*}
&diag\left(\frac{1}{(2 i_{n}+1)\ldots (2 i_{1}+1)} \left(\frac{-\Delta t}{2}I_{const}(1)+{K'}^{0}\right)\right)-S(\alpha) \otimes V^{0} p^{l} \\
&= F^{l} - \sum_{m=1}^{l-1} diag\left(\frac{1}{(2 i_{n}+1)\ldots (2 i_{1}+1)} {K'}^{l-m}\right) p^{m} + \sum_{m=1}^{l-1} S(\alpha) \otimes V^{l-m} p^{m}
\end{align*}}

\end{document}